\let\oldproofname=\proofname
\renewcommand{\proofname}{\rm\bf{\oldproofname}}
\newtheorem{theorem}{Theorem}[section]
\newtheorem{prop}[theorem]{Proposition}
\newtheorem{lemma}[theorem]{Lemma}
\newtheorem{cor}[theorem]{Corollary}
\newtheorem{question}[theorem]{Question}
\newtheorem{ex}[theorem]{Example}
\theoremstyle{definition}
\theoremstyle{remark}
\newtheorem{rmk}[theorem]{Remark}
\def\R{\mathbb{R}}
\def\Z{\mathbb{Z}}
\def\N{\mathbb{N}}
\def\C{\mathbb{C}}
\newcommand{\adj}{\operatorname{adj}}
\newcommand{\Crit}{\operatorname{Crit}}
\def\m{{q}}
\def\mrm#1{{\mathrm{#1}}}
\def\cl#1{{\mathcal{#1}}}
\def\bb#1{{\mathbb{#1}}}
\newcommand{\eps}{\epsilon}
\newcommand{\id}{\mathrm{id}}
\newcommand{\del}{\partial}
\begin{document}



\title{Persistent transcendental B\'ezout theorems}



\author{Lev Buhovsky$^1$, Iosif Polterovich$^2$, Leonid Polterovich$^3$, Egor Shelukhin$^4$  and 
Vuka\v{s}in Stojisavljevi\'{c}$^5$}
\date{}

\footnotetext[1]{Partially supported by ERC Starting Grant 757585.}
\footnotetext[2]{Partially supported by NSERC and FRQNT.}
\footnotetext[3]{Partially supported by the Israel Science Foundation
grant 1102/20.}
\footnotetext[4]{Partially supported by NSERC, Fondation Courtois, Alfred P. Sloan Foundation.}
\footnotetext[5]{Partially supported by CRM-ISM postdoctoral fellowship and ERC Starting Grant 851701.}

\maketitle

\begin{abstract}
An example of Cornalba and Shiffman from 1972 disproves in dimension two or higher a classical prediction that the count of zeros of holomorphic self-mappings of the complex linear space should be controlled by the maximum modulus function. We prove that such a bound holds for a modified coarse count inspired by the theory of persistence modules originating in topological data analysis. 
\end{abstract}

\tableofcontents


\section{Introduction and main results}\label{subsec-TBez}
\subsection{The transcendental B\'ezout problem}
The classical B\'ezout theorem states that the number of common zeros of $n$ polynomials in $n$ variables is generically bounded by the product of their degrees.
The {\it transcendental B\'ezout problem} is concerned with  the count of zeros of entire maps
$\C^n \to \C^n$.  It is motivated by a number of influential mathematical ideas. The  starting point is
Serre's famous G.A.G.A. \cite{Serre}, by now understood as a meta-mathematical principle stating that complex {\it projective} analytic geometry reduces to algebraic geometry. A prototypical result is
a theorem of Chow \cite{Chow49}, by which every closed complex submanifold of $\C P^n$ is necessarily algebraic, i.e., is given as the set of solutions of a system of polynomial equations.
However, as the following simple example shows, Chow's theorem fails in the affine setting.
\begin{ex} \label{ex-1-vsp} {\rm Consider an analytic function $f:\C \to \C$ given by
$$f(z)= e^z+1 = (e^x \cos y +1) +ie^x \sin y,\; z=x+iy\;.$$
Zeros of $f$ form an infinite discrete set $\{(2k+1)\pi i, \; k \in \Z\}$.
It is not biholomorphically equivalent to any algebraic (and hence  finite) proper subset of $\C$.}
\end{ex}

In order to revive at least some parts of G.A.G.A. in the affine framework one needs a substitute of the notion of the degree of a polynomial for entire mappings $f:\C^n \to \C^n$. As it is put in \cite{GLO}, {\it  ``A transcendental entire function that can be expanded into an infinite power series can
be viewed as a ``polynomial of infinite degree", and the fact that the degree is
infinite brings no additional information to the statement that an entire function
is not a polynomial."}  To this end, one introduces the {\it maximum modulus}
$$\mu(f,r) = \max _{z \in B_r} |f(z)|\;,$$
where $B_r$ stands for the closed ball of radius $r$.
This quantity has at least two degree-like features. First,
assume that\footnote{Here and throughout the text we denote by $\log$ the logarithm to base $2$.}
$$\limsup_{r \to \infty} \frac{\log \mu(f,r)}{\log r} < k+1.$$
Then, remarkably,  $f$ is a polynomial of the total degree $\leq k$.
This is a minor generalization of Liouville's classical theorem. Thus one can distinguish polynomials in terms of the maximum modulus.

In what follows, let $\zeta(f,r)$ denote the number of zeros of a continuous map  $f: \C^n \to \C^n$
inside the ball $B_r$.

The second feature of the maximum modulus of an entire function $f: \C \to \C$  is given by 
the following statement which readily follows from Jensen's formula:
if $f(0)\neq 0$, then for every $a>1$
\begin{equation}
\label{eq-Jensen-vsp}
\zeta(f,r) \leq C \log \mu(f,ar) \;\;\forall r >0\;,
\end{equation}
where $C$ is a positive constant depending on $a$ and $f(0)$. For instance, in Example \ref{ex-1-vsp}
both $\zeta$ and $\log \mu$ grow linearly in  $r$.

These two features might have given a hope that $\log \mu(f,r)$ is an appropriate substitute of the degree for an entire map $f: \C^n \to \C^n$ (this was known as the transcendental B\'{e}zout problem). However,
this analogy was  overturned  by Cornalba and Shiffman \cite{CornalbaShiffman} who famously constructed, for $n=2$, an entire map $f$ with
$\log \mu(f,r) \leq C_{\epsilon} r^\epsilon$ for every $\epsilon >0$ (and hence of growth {\it order} zero),  with  $\zeta(f,r)$ growing  arbitrarily fast.
As Griffiths wrote in \cite{Griffiths-TB2} {\it ``This is the first instance known to
this author when the analogue of a general result in algebraic geometry fails
to hold in analytic geometry."}

\subsection{Coarse zero count}
One of the motivations for  the present paper is to further explore  the Cornalba--Shiffman example using the notion of {\it coarse zero count} introduced in \cite{BPPPSS}, which is
based on topological persistence. The idea, roughly speaking, is to
discard the zeros corresponding to small oscillations of the map. It turns out that with such a count
we are able to get a  Jensen-type estimate \eqref{eq-Jensen-vsp}, albeit with a possibly non-sharp power of $\log \mu(f,r)$ in the right-hand side,  see \eqref{eq:mainbezout} below.

Given a continuous map $f: \C^n \to \C^n$ and positive numbers $\delta, r>0$,  we define the counting function $\zeta(f,r,\delta)$  of {\it $\delta$-coarse zeros} of $f$ inside a ball $B_r$ as the number of connected components of the set
$f^{-1}(B_\delta)\cap B_r$ which contain zeros of $f$, see Figure \ref{Coarse_vs_Classical}.

\begin{figure}[ht]
	\begin{center}
		\includegraphics[scale=0.5]{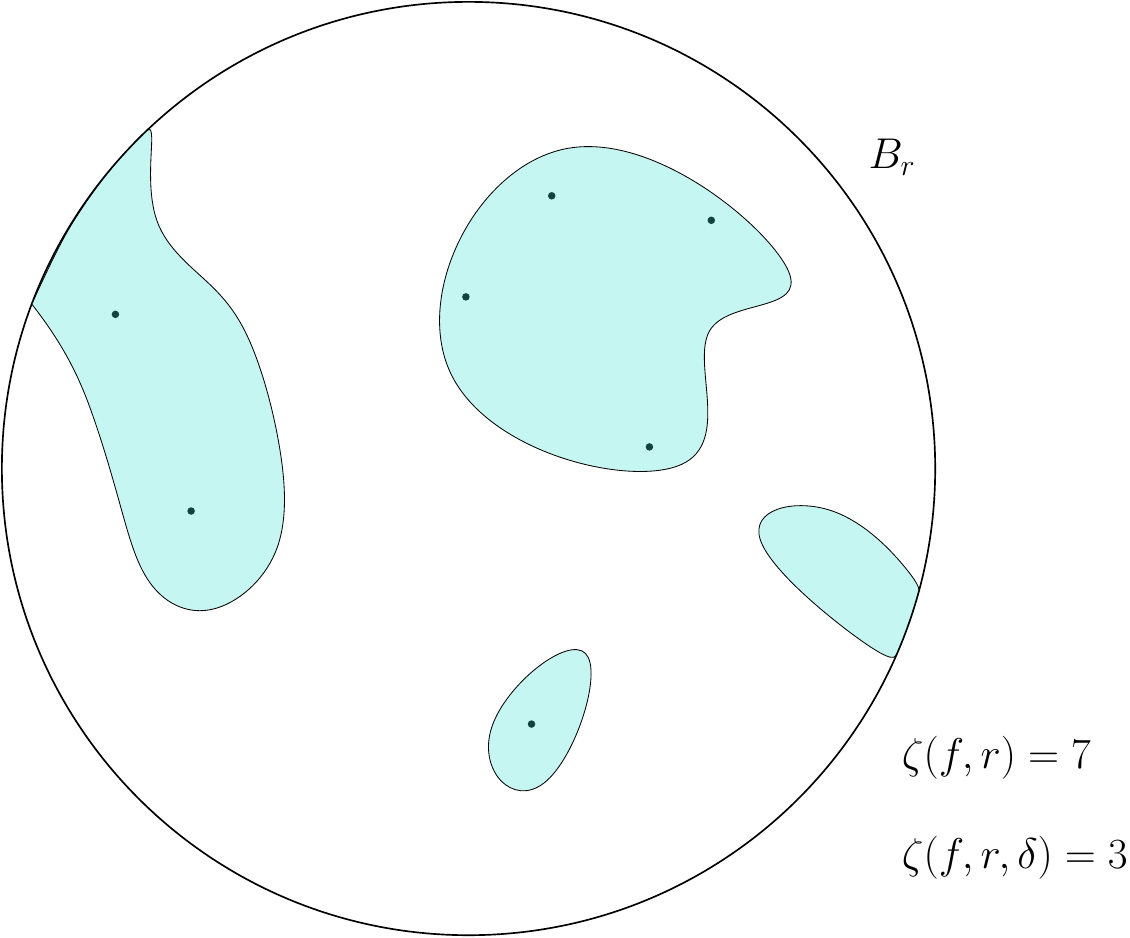}
		\caption{Dots represent zeros of $f$, while shaded regions depict the set $f^{-1}(B_\delta)$}
		\label{Coarse_vs_Classical}
	\end{center}
\end{figure}

\begin{theorem}\label{thm: analytic bezout}
	For any analytic map $f: \C^n \to \C^n$ and any  $a>1$, $r>0$,  and   $\delta \in (0, \frac{\mu(f, ar)}{2})$,  we have
\begin{equation}
\label{eq:mainbezout}
\zeta(f,r,\delta) \leq C \left(\log\left(\frac{\mu(f, ar)}{\delta}\right)\right)^{2n-1},
\end{equation}
	where the constant $C$ depends only on $a$ and $n$. 	
\end{theorem}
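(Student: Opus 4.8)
The plan is to prove \eqref{eq:mainbezout} by induction on the dimension $n$, peeling off one coordinate of $f=(f_1,\dots,f_n)$ at each step and using a Jensen--Lelong type estimate as the analytic engine. \emph{Base case $n=1$.} Here one upgrades the classical bound \eqref{eq-Jensen-vsp} so that the constant depends only on $a$ and not on $f(0)$ — this is exactly what the coarse count buys us, since nearby zeros that would inflate the classical count get absorbed into a single component. Concretely, one applies Jensen's formula on an intermediate circle $|z|=\rho$, $r<\rho<ar$: a connected component of $f^{-1}(B_\delta)\cap B_r$ carrying a zero forces $|f|$ to rise from below $\delta$ up to near the maximum modulus as one exits it, so each such component is charged a definite fraction of $\log(\mu(f,ar)/\delta)$, yielding $\zeta(f,r,\delta)\le C_a\log(\mu(f,ar)/\delta)$. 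Note that $2n-1=1$ here.

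\emph{Inductive step.} Assume \eqref{eq:mainbezout} in dimension $n-1$, in a form uniform enough to be applied on bounded pieces of analytic hypersurfaces (with $\mu$ replaced by a supremum over a slightly larger piece), not just on balls in $\C^{n-1}$. Put $g:=f_n$, $Z:=\{g=0\}$, and let $V_1,\dots,V_M$ be the connected components of $g^{-1}(B_\delta)\cap B_r$ that meet $Z$. Since $f^{-1}(B_\delta)\subseteq g^{-1}(B_\delta)$ and every zero of $f$ lies on $Z$, every component $U$ of $f^{-1}(B_\delta)\cap B_r$ carrying a zero of $f$ lies in some $V_j$; moreover, as $U$ is open and closed in $f^{-1}(B_\delta)\cap B_r$ and $|f|^2=|f'|^2$ on $Z$ (where $f'=(f_1,\dots,f_{n-1})$), the set $U\cap Z$ is a union of connected components of $(f')^{-1}(B_\delta)\cap B_r\cap Z$. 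Distinct $U$'s give disjoint nonempty such sets carrying zeros of $f'$, so
\[
\zeta(f,r,\delta)\ \le\ \sum_{j=1}^{M}\nu_j,\qquad
\nu_j:=\#\{\text{components of }(f')^{-1}(B_\delta)\cap B_r\cap Z\cap V_j\text{ carrying a zero of }f'\}.
\]
Two estimates remain: a bound on the number $M$ of ``significant clusters'' of the single function $g$, and a bound on each $\nu_j$, which is a coarse zero count for $f'$ on a piece of the hypersurface $Z$.

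The count $\nu_j$ is handled by the inductive hypothesis applied to $f'$ on $Z\cap V_j$; since $Z\cap V_j\cap B_{ar}\subseteq B_{ar}$ one has $\sup|f'|\le\mu(f,ar)$ there, so $\nu_j\le C_a\,(\log(\mu(f,ar)/\delta))^{2(n-1)-1}$. For $M$ the point is codimension one: the Lelong--Poincar\'e identity $dd^c\log|g|=[Z]$ together with the Lelong--Jensen formula control the area of $Z\cap B_{ar}$ by the Nevanlinna characteristic of $g$, hence by $\log\mu(g,ar)\le\log\mu(f,ar)$, while each cluster carries, at the scale set by $\delta$, a definite share of this mass (by monotonicity of Lelong masses); the expected outcome is $M\le C_a\,(\log(\mu(f,ar)/\delta))^{2}$, the one extra power over the naive $\log$ reflecting that, in contrast with $\C^1$, distinct clusters of $g$ need not be geometrically separated, forcing a slicing or dyadic-scale argument. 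Multiplying, $\zeta(f,r,\delta)\le C_a\,(\log(\mu(f,ar)/\delta))^{2+2(n-1)-1}=C_a\,(\log(\mu(f,ar)/\delta))^{2n-1}$, which is the asserted exponent; this also explains why the scheme cannot reach the conjecturally optimal exponent $n$.

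The main obstacle is the passage to $Z$ in the inductive step. The hypersurface $Z$ may be singular with complicated topology, the restriction $f'|_Z$ is not generic, and — because $f^{-1}(0)$ has codimension $n$ — no single Jensen-type formula on $B_{ar}$ sees all clusters at once; one must localize to the pieces $Z\cap V_j$ and prove a Lelong--Jensen / integral-geometric inequality that is uniform over such pieces and robust under the $\delta$-coarsening. This forces one to run the induction with the strongest available statement (for analytic maps on bounded balls, or on bounded analytic subvarieties), absorbing the volume and the number of components of $Z$ into the inductive bound; making this localization and the ensuing bookkeeping of exponents close up is the heart of the argument.
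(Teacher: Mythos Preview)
Your approach is fundamentally different from the paper's and, as written, has a genuine gap at its heart. The paper does \emph{not} proceed by induction on $n$ or by Jensen--Lelong techniques. Instead it approximates $f$ on $B_r$ by its Taylor polynomial $p$ of degree $<k$, where $k$ is the least integer with $C_a a^{-k}\mu(f,ar)<\delta/2$ (Cauchy estimates give $|f-p|<\delta/2$). After a small perturbation making $p$ proper, a short argument shows that every component of $f^{-1}(B_\delta)\cap B_r$ containing a zero of $f$ must contain either a zero of $p$ or a connected component of $\Crit(|p|^2\big|_{\partial B_r})$. The first are at most $k^n$ by B\'ezout; the second are at most $5k(10k)^{2n-2}$ by Milnor's bound on Betti numbers of real algebraic sets, applied after stereographic projection of $\partial B_r$. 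Since $k\le C\log(\mu(f,ar)/\delta)$, the exponent $2n-1$ drops out directly as a boundary effect, with no induction and no Nevanlinna theory.

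In your scheme the two crucial estimates are both unproved. First, you need the inductive hypothesis on the (possibly singular) hypersurface $Z=\{f_n=0\}$, but the theorem as stated concerns balls in $\C^{n-1}$; you acknowledge one must ``run the induction with the strongest available statement\ldots on bounded analytic subvarieties'', yet no such statement is formulated, let alone proved, and there is no obvious way to reduce the coarse count on $Z\cap V_j$ to a count on a Euclidean ball. Second, the bound $M\le C_a(\log(\mu/\delta))^2$ on the number of $\delta$-clusters of the single function $g=f_n$ is only asserted as ``the expected outcome'': Lelong--Jensen controls the $(2n-2)$-mass of $Z$, not the number of components of $g^{-1}(B_\delta)\cap B_r$, and a component can carry arbitrarily little mass, so the passage from mass to component count needs a real argument that you do not supply. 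You yourself flag these as ``the main obstacle'' and ``the heart of the argument'', which is accurate: without them the induction does not close, and the proposal remains a strategy rather than a proof.
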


This theorem is proved in Section \ref{sec: proof main}. Its generalization in the framework
of topological persistence is presented in Section \ref{sec: persistence}.

Note that by Liouville's theorem, unless $f$ is constant, $\mu(f, ar)$ is unbounded. Therefore, for any given $\delta > 0,$ the condition $\delta \in (0,\mu(f,ar)/2)$  holds for all $r$ large enough.
\begin{rmk}\label{rmk:expn} Consider a higher-dimensional generalization of Example \ref{ex-1-vsp}: take an analytic map $f:\C^n \to \C^n$ given by
$$f(z_1,\dots,z_n) = \left(e^{z_1}+1,\ldots,e^{z_n}+1\right).$$
It is easy to see that $\log \mu(f,r)$ grows linearly in $r$ and $\zeta(f,r,\delta)$ grows as  $r^n$  when  $r \to\infty$, for $\delta$ sufficiently small. It would be interesting to understand whether the power of the logarithm in
\eqref{eq:mainbezout} is sharp or it can be improved, possibly, to $n$.
\end{rmk}
It follows from Theorem \ref{thm: analytic bezout}  that for the Cornalba-Shiffman example  the coarse count of zeros grows slower than any positive power of $r$, see Theorem \ref{Thm:CS_Coarse_Count} below for precise asymptotics.

\begin{rmk}  Consider a function $f$ of growth order $\leq \rho,$ that is,  for all $\eps>0$, there exist positive constants $A_{\eps}, B$, such that    \[|f(z)| \leq A_{\eps} e^{B|z|^{\rho+\eps}}\] everywhere. Then
by \eqref{eq:mainbezout},  $\zeta(f,r,\delta)$ grows slower than $r^{(2n-1)\rho+\eps}$ for every $\eps>0.$
At the same time, it was shown in   \cite[equation (1.9)]{Carlson-TB} that for any $\alpha>0$,
$\zeta(f+c, r)$ grows slower than $r^{(2n-1)\rho+1 +\alpha}$ for almost all $c>0$ small enough. While the growth rate in \eqref{eq:mainbezout} is slightly sharper, it is interesting to note
that the power  $(2n-1)\rho$ appears in both bounds.
\end{rmk}
\subsection{Cornalba--Shiffman example: a coarse perspective} Let us remind the Cornalba--Shiffman construction.
Let $g:\C \to \C$ be given by
$$g(z)=\prod_{i=1}^\infty \left( 1- \frac{z}{2^i} \right).$$
For $k\geq 1$ an integer, let
$$g_k(z)=\frac{g(z)}{1-\frac{z}{2^k}}$$
be the function defined by the same product with $k$-th term excluded. All the infinite products converge uniformly on compact subsets of $\C$ and hence $g$ and $g_k$ are holomorphic by Weierstrass' theorem. For a positive integer $c$ we define a polynomial $P_c:\C \rightarrow \C$ as
$$P_c(w)=\prod_{j=1}^c \left( w- \frac{1}{j} \right).$$
Given a strictly increasing sequence of positive integers $\mathfrak{c}=\{c_i \}$,  $c_1<c_2<\ldots$ define $f:\C^2\to \C$ as
$$f(z,w)=\sum_{i=1}^\infty 2^{-c_i^2}g_i(z)P_{c_i}(w).$$
$f$ converges uniformly on compact sets and is hence holomorphic by Weierstrass' theorem in several variables. Finally, we define a map $F:\C^2 \rightarrow \C^2$, $F(z,w)=(g(z),f(z,w)).$ As shown in \cite{CornalbaShiffman}, for all $\mathfrak{c}$, $F$ is of order zero. 
However, the zero set of $F$ is given by
$$F^{-1}(0)=\left\lbrace \Big(2^i,\frac{1}{j} \Big) ~|~ i=1,2,\ldots ; j=1,\ldots ,c_i \right\rbrace ,$$
as depicted in Figure \ref{CS_Zeros}. The dots represent zeros of $F$ and the number of zeros $\zeta(F,r)=\zeta(F,r,0)$ equals the number of dots inside the circle.

\begin{figure}[ht]
	\begin{center}
		\includegraphics[scale=0.7]{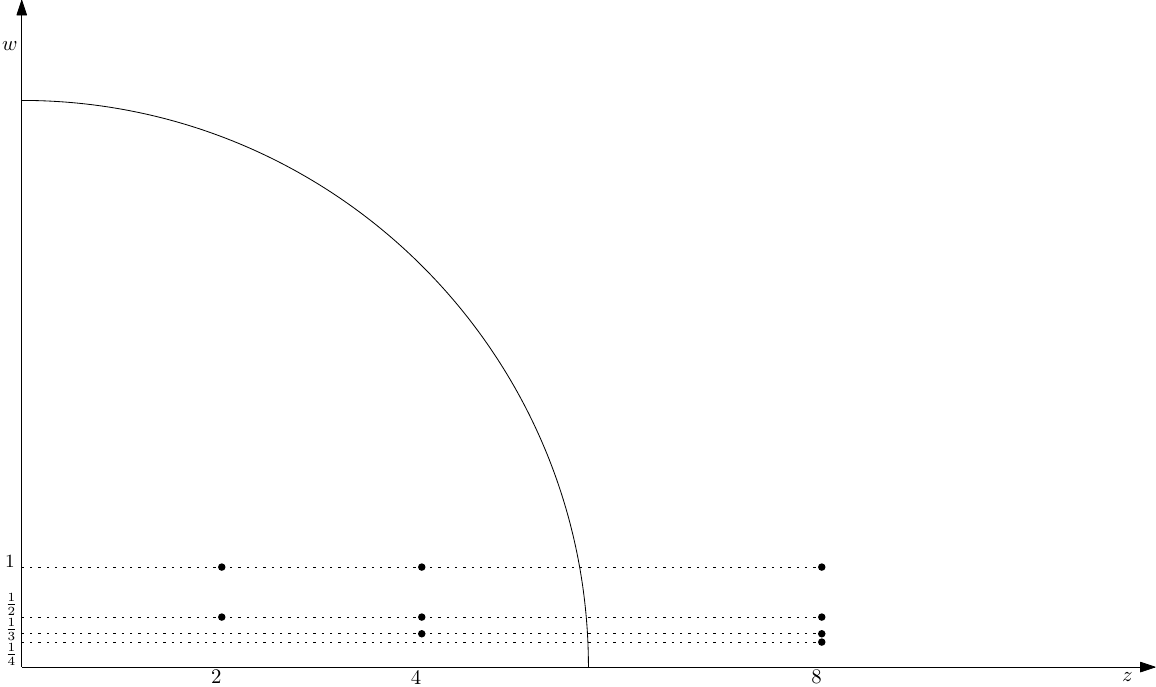}
		\caption{Classical count of zeros}
		\label{CS_Zeros}
	\end{center}
\end{figure}

We now see that by taking $\mathfrak{c}$ which increases sufficiently fast $\zeta(F,r)$ can grow arbitrarily fast which disproves the two-dimensional transcendental B\'{e}zout problem.  More precisely, in \cite{CornalbaShiffman} Cornalba and Shiffman made a remark that $c_i=2^{2^i}$ would suffice. Indeed, it is not difficult to check that for $\lambda>0$, if $c_i=\lfloor2^{\lambda i } \rfloor$ then $\zeta(F,r)=\Theta (r^\lambda)$ i.e. the order of growth of the number of zeros is $\lambda,$ while for $c_i=2^{2^i}$, $\log \zeta(F,r)= \Theta (r)$ and the order of growth of $\zeta(F,r)$ is infinite. Here and further on we write $a(r)=\Theta(b(r))$ if $a(r)=O(b(r))$ and $b(r)=O(a(r))$ as $r \to \infty$; we will also write $a(r) \sim b(r)$ if $\lim_{r\to\infty} a(r)/b(r)=1$.

Let us re-examine the same class of examples from the coarse point of view. The following result is proved in Section \ref{Section:CS_Proofs}. 

\begin{theorem}\label{Thm:CS_Coarse_Count}
Let $\mathfrak{c}$ be an arbitrary increasing sequence of positive integers. When $r\to +\infty$ it holds
$$\log \mu(F, r)=\Theta ((\log r)^2),$$
and for a fixed $\delta>0$
$$\zeta(F,r,\delta) \sim \log r.$$
\end{theorem}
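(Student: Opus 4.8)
The plan is to estimate $\log\mu(F,r)$ and $\zeta(F,r,\delta)$ by analyzing the two components $g(z)$ and $f(z,w)$ of $F$ separately. First I would handle the maximum modulus. Since $F=(g,f)$, we have $\mu(F,r)=\max(\mu(g,r),\mu(f,r))$ up to a factor of $\sqrt2$, so it suffices to bound each. For $g(z)=\prod_{i\ge1}(1-z/2^i)$ one estimates $\log|g(z)|$ on $|z|\le r$ by splitting the product at $i\approx\log r$: the factors with $2^i\le r$ each contribute $O(\log r)$, and there are $O(\log r)$ of them, giving an $O((\log r)^2)$ upper bound; the tail factors with $2^i>r$ contribute $O(1)$ in total. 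The matching lower bound $\Omega((\log r)^2)$ comes from evaluating $|g|$ at a real point $z\sim -r$ (or using that $g$ has zeros at all $2^i\le r$ forcing, via Jensen, $\log\mu(g,r)\gtrsim\sum_{2^i\le r}\log(r/2^i)=\Theta((\log r)^2)$). Since $F$ is of order zero by \cite{CornalbaShiffman}, $\log\mu(f,r)=O(r^\epsilon)$; but in fact the same kind of direct product/sum estimate on $f(z,w)=\sum_i 2^{-c_i^2}g_i(z)P_{c_i}(w)$ shows $\log\mu(f,r)=O((\log r)^2)$ as well, so $\log\mu(F,r)=\Theta((\log r)^2)$.

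Next I would turn to the coarse count. The key structural observation is that $F^{-1}(0)=\{(2^i,1/j):i\ge1,\ 1\le j\le c_i\}$ is organized into vertical "columns": for each $i$ the points $(2^i,1/j)$, $j=1,\dots,c_i$, all share the first coordinate $2^i$. The first coordinate of $F$ is $g(z)$, which vanishes \emph{only} at the points $z=2^i$, and near $z=2^i$ the factor $(1-z/2^i)$ forces $|g(z)|$ to grow linearly away from $2^i$ at a definite rate. I would show that for fixed $\delta>0$ there is $\eta=\eta(\delta)>0$ such that $F^{-1}(B_\delta)\subset\{z:|z-2^i|<\eta\text{ for some }i\}$ restricted to the relevant region, and that the components of $F^{-1}(B_\delta)$ lying over the $\eta$-disc around $2^i$ containing a zero of $F$ merge into a \emph{single} connected component. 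Indeed, once the first coordinate is pinned near $2^i$, the second equation $f(2^i+\text{small},w)\in B_{\delta'}$ cuts out a set in the $w$-variable which, because $\delta>0$ is fixed while the $P_{c_i}$-roots $1/j$ accumulate at $0$ with gaps $1/j-1/(j+1)=\Theta(1/j^2)\to0$, becomes connected and contains all the points $1/j$ with $j$ up to roughly $c_i$. Hence each nonempty column $i$ with $2^i\le r$ contributes exactly one $\delta$-coarse zero inside $B_r$, and columns with $2^i>r$ contribute none (and boundary effects near $|z|=r$ change the count by $O(1)$). Therefore $\zeta(F,r,\delta)=\#\{i:2^i\le r\}+O(1)=\lfloor\log r\rfloor+O(1)\sim\log r$.

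The main obstacle I anticipate is the connectedness claim in the second part: proving rigorously that for fixed $\delta$ the slice $\{w:(2^i,w)\in F^{-1}(B_\delta)\}$ (and its thickening in $z$) is a single connected set containing all the zeros $1/j$ of the column. This requires quantitative control of $f(z,w)$ near $z=2^i$: one must show that $g_k(z)$ for $k\ne i$ is negligibly small there (because of the vanishing factor $1-z/2^k$ is bounded away from the others, or rather that the $i$-th term dominates due to the $2^{-c_i^2}$ weights chosen so that $F$ has order zero), so that $f(2^i,w)\approx 2^{-c_i^2}g_i(2^i)P_{c_i}(w)$, and then that the polynomial $P_{c_i}(w)=\prod_{j=1}^{c_i}(w-1/j)$ stays below the appropriate threshold on a connected neighborhood of the segment $[1/c_i,1]$. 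The delicate point is that the threshold, after dividing by the tiny constant $2^{-c_i^2}g_i(2^i)$, is itself extremely small, so one needs the precise size of $g_i(2^i)=\prod_{k\ne i}(1-2^i/2^k)$ and a lower bound showing the "valley" of $|P_{c_i}|$ near its roots is wide enough; this is where a careful but routine estimate, together with the order-zero bound from \cite{CornalbaShiffman}, closes the argument. The remaining parts — the $\Theta((\log r)^2)$ asymptotics and the $O(1)$ error control near the boundary circle — are comparatively straightforward.
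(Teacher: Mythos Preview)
Your overall strategy matches the paper's: bound $\mu(F,r)$ by splitting the product for $g$ at $i\approx\log r$ and evaluating at $z=-r$ for the lower bound; for $\zeta$, show that all zeros in the $i$-th column merge into one component of $\{|F|\le\delta\}$ (upper bound) and that components over distinct $2^i$ are separated via the first coordinate $g(z)$ (lower bound). The paper's execution is slightly cleaner than what you sketch: for merging it simply checks that the segment $\{2^i\}\times[0,1]$ lies in $\{|F|\le\delta\}$ for $i$ large (since $g_k(2^i)=0$ for $k\ne i$ exactly, no approximation in $z$ is needed), and for separation it shows $|g(z)|>C_0\,2^{(k-1)k/2}$ on the hyperplanes $\{\mathrm{Re}\,z=2^k+2^{k-1}\}$, which is equivalent to your $\eta$-neighborhood formulation.

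One point in your discussion is backwards and could derail an implementation. You write that after dividing $\delta$ by the ``tiny constant $2^{-c_i^2}g_i(2^i)$'' the resulting threshold on $|P_{c_i}|$ is ``extremely small'', and you worry about showing the valley of $|P_{c_i}|$ is wide enough. In fact the threshold is enormous: $|g_i(2^i)|<2^{i(i-1)/2}$, so one needs only $|P_{c_i}(w)|\le\delta\cdot 2^{c_i^2-i(i-1)/2}$, and since $c_i\ge i$ the right side tends to infinity. Because $|P_{c_i}(w)|<1$ on $[0,1]$, the merging is immediate once $i$ is large enough that $2^{(i-1)i/2-c_i^2}\le\delta$. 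So the step you flagged as delicate is actually the easiest part; the clustering of the roots $1/j$ plays no role.
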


\begin{figure}[ht]
	\begin{center}
		\includegraphics[scale=0.65]{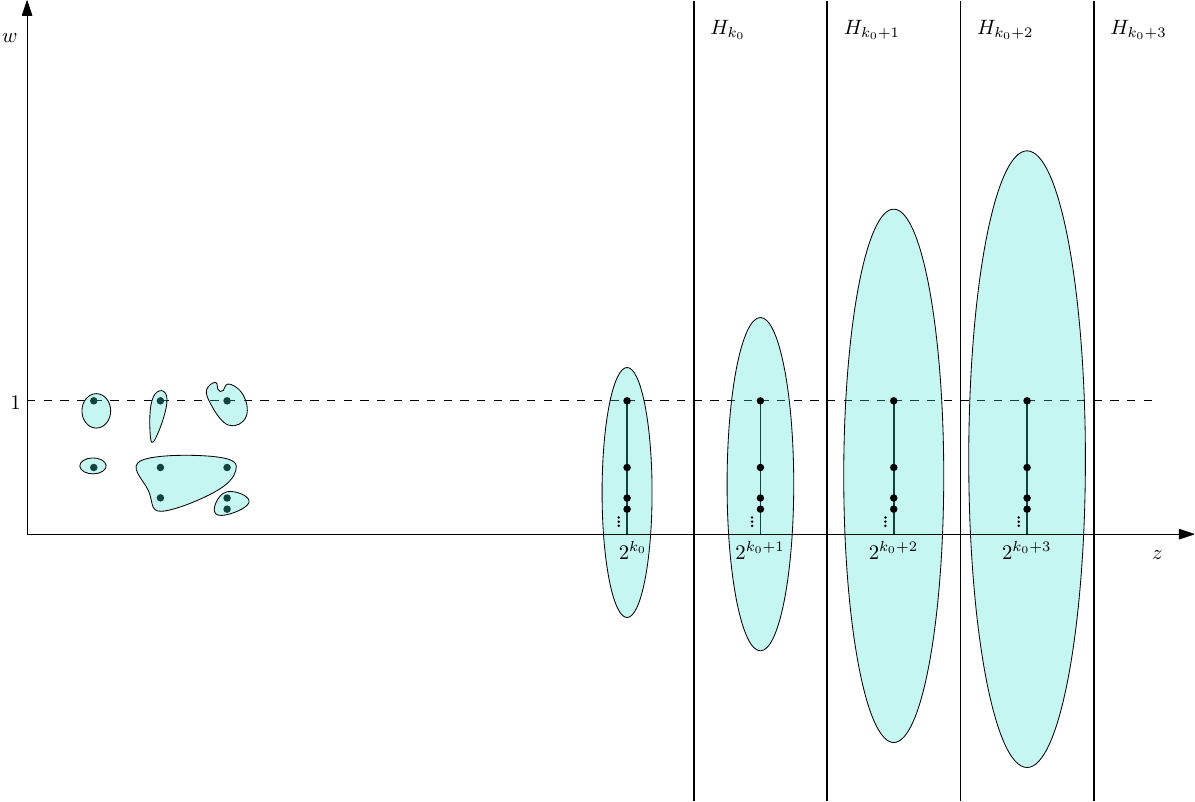}
		\caption{Coarse count of zeros}
		\label{Zeros_Main}
	\end{center}
\end{figure}
Let us  explain the geometric picture behind Theorem \ref{Thm:CS_Coarse_Count}, while referring the reader to Section \ref{Section:CS_Proofs} for detailed proofs. For a fixed $\delta$, we show that the set $\{|F|\leq \delta \}$, while possibly being complicated for small radius $r$, stabilizes for large radii and can be described rather accurately. More precisely, we show that there exists $k_0$, which depends only on $\delta$, such that $\{|F|\leq \delta \}$ contains intervals $\{2^k \} \times [0,1]$ for all $k\geq k_0.$ Thus, for $k\geq k_0$ the zeros on each of the intervals $\{2^k \} \times [0,1]$ are counted coarsely as one zero and the coarse count increases at the rate $\log r.$ This implies $\zeta(F,r,\delta)=O(\log r).$ Furthermore, for $k\geq k_0$, $\{ |F|\leq \delta \}$ will never intersect hyperplanes $H_k= \{ (w,z) ~|~ Re(z)=2^k+2^{k-1} \}.$ In other words, $\{|F|\leq \delta \}$ consists of parts contained between those hyperplanes and which contain intervals $\{2^k \} \times [0,1],$ as shown on Figure \ref{Zeros_Main} (shaded regions represent the set $\{ |F|\leq \delta \}$). This implies that $\zeta(F,r,\delta)= \Theta(\log r),$ which we can improve to $\zeta(F,r,\delta) \sim \log r$ as claimed by Theorem \ref{Thm:CS_Coarse_Count}.

Putting together Theorems   \ref{thm: analytic bezout} and  \ref{Thm:CS_Coarse_Count}, we come to the following conclusion.
It follows from  Theorem \ref{thm: analytic bezout} for $n=2$ that
$$    \zeta(F,r,\delta) \leq C_a (\log \mu(F, ar)-\log \delta)^3 .$$
On the logarithmic scale, this inequality tells us that for fixed $a$ and $\delta$,
\begin{equation}\label{CS_coarse_count}
   \log \zeta(F,r,\delta) = O \left(\log \log \mu(F,a r)\right),
\end{equation}
when $r\to +\infty.$ Theorem \ref{Thm:CS_Coarse_Count} implies that (\ref{CS_coarse_count}) is asymptotically sharp as
\begin{equation}\label{Equality_coarse_count}
\log  \zeta(F,r,\delta) = \Theta (\log \log \mu(F, ar)).
\end{equation}
In other words, Cornalba--Shiffman examples exhibit highly oscillatory behaviour on small scales, which increases the count of zeros in an uncontrollable way and contradicts the transcendental B\'{e}zout problem. However, equality (\ref{Equality_coarse_count}) shows that if we discard small oscillations, the same examples behave essentially as predicted by the coarse version of the transcendental B\'{e}zout problem.

\subsection{Islands vs peninsulas} A connected component of $f^{-1}(B_\delta)\cap B_r$ is called an {\em island} if it is disjoint from $S_r=\partial B_r$ and a {\it peninsula} otherwise. We prove the following result in Section \ref{sec: zeros and islands}: this is a combination of Corollary \ref{cor:islands 3} and Corollary \ref{cor:islands 2}.

\begin{theorem}\label{thm: islands}
Every island has non-empty interior which contains at least one  zero of $f.$
\end{theorem}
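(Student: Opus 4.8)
The plan is to treat the two assertions separately: (i) an island $U$ (a bounded connected component of $f^{-1}(B_\delta)$, after noting that an island disjoint from $S_r$ is automatically open in $\C^n$ and is an honest connected component of the full preimage $f^{-1}(B_\delta)$, not just of its intersection with $B_r$) must contain a zero of $f$; and (ii) such an island has non-empty interior. For (ii), observe that $f^{-1}(B_\delta) = f^{-1}(\interior B_\delta) \cup f^{-1}(S_\delta)$, and that $f^{-1}(\interior B_\delta)$ is open; so the only way an island could have empty interior is if it were entirely contained in the level set $f^{-1}(S_\delta) = \{|f| = \delta\}$. But a connected component of $f^{-1}(B_\delta)$ that meets $\{|f|=\delta\}$ and is disjoint from $S_r$ would, by the open mapping / maximum principle behaviour of holomorphic maps, have to contain points where $|f|<\delta$: indeed $|f|$ is plurisubharmonic composed structure — more precisely $\log|f_j|$ is plurisubharmonic — and cannot attain an interior maximum of $|f|^2 = \sum |f_j|^2$ on an open set unless $f$ is constant there, in which case $f$ is constant on $\C^n$ by the identity principle and the statement is vacuous or immediate. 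This forces the component to contain an open subset of $f^{-1}(\interior B_\delta)$, giving non-empty interior.

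For (i), the heart of the matter, I would argue by contradiction: suppose the island $U$ contains no zero of $f$. Since $U$ is a bounded open connected set on which $f$ does not vanish, and since (by (ii) and its proof) $U$ is open with $\overline{U}$ compact and $|f| = \delta$ on $\partial U$ (the boundary cannot meet $S_r$ because $U$ is an island, and by maximality of the component $\partial U \subset \{|f| = \delta\}$), we get a holomorphic map $f \colon \overline{U} \to \C^n$ with $|f| \le \delta$ on $\overline{U}$, $|f| = \delta$ on $\partial U$, and $f \ne 0$ on $\overline{U}$. Now consider $h = 1/|f|^2$ or, better, use that $\log(1/|f_j|)$ — wait, we need a single function; the clean move is: since $f$ is non-vanishing on the compact set $\overline U$, each component $f_j$ might still vanish, so instead use that $\sum_j |f_j|^2 = |f|^2 \ge c > 0$ on $\overline U$ is \emph{false in general} — rather, I should use the minimum principle for $|f|$ when $f$ is non-vanishing: if $f\colon \overline U \to \C^n \setminus \{0\}$ is holomorphic and non-constant, then $\log |f|^2$ need not be subharmonic, but $-\log|f|$ is plurisubharmonic when... this is exactly the subtle point. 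The correct tool is: for a holomorphic map $f$ to $\C^n$ with no zeros, $|f|^{-1}$ is plurisubharmonic is also false; what \emph{is} true is that $\max_j \log|f_j|$ is plurisubharmonic, but that does not control $|f|$ from below away from the common zero set.

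Given this, the real argument must be topological/degree-theoretic rather than purely potential-theoretic: the plan is to use that on the boundary $\partial U$ we have $|f| = \delta > 0$, so the map $f/|f| \colon \partial U \to S^{2n-1}$ is defined, and if $f$ had no zero in $U$ then $f/|f|$ extends to all of $\overline U$, making the local degree (the count, with multiplicity, of preimages of a regular value, equivalently $\frac{1}{(2\pi i)^n}$ times an integral of $f^*(\text{Bochner–Martinelli kernel})$ over $\partial U$) equal to zero; combined with the fact that this degree equals the number of zeros of $f$ in $U$ counted with positive multiplicities (positivity of intersection for holomorphic maps), we would conclude there are no zeros — which is consistent, so this alone gives nothing. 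The actual contradiction must come from elsewhere: namely, an island by definition is a component of $f^{-1}(B_\delta) \cap B_r$ \emph{that contains a zero of $f$} in the coarse-count set-up — rereading the definition of $\zeta(f,r,\delta)$, components are only counted if they contain zeros, but an ``island'' as defined in this subsection is just a component disjoint from $S_r$, with no a priori zero. The key step I expect to be the main obstacle is therefore precisely this: showing a bounded component of $\{|f|\le\delta\}$ must contain a zero. I would prove it via the Bochner–Martinelli / Rouché principle in several variables: choose $\delta' $ slightly less than $\delta$ so that $\{|f| \le \delta'\} \cap U$ is a non-empty (by (ii)-type reasoning) compact subset with boundary in $\{|f|=\delta'\}$, on which the Bochner–Martinelli integral of $f$ computes the zero count; then show this integral is positive — hence at least one zero — because near $\partial U$ the map $f$ is homotopic, through non-vanishing maps, to a model map of positive degree, using that $\overline U$ is a connected \emph{open} set and $f$ is an open holomorphic map so $f(U)$ is an open neighbourhood of some point of $B_\delta$, and by connectedness of $f(U)$ together with $|f|\le \delta$ and $f=$ (boundary values of modulus $\delta$), the image $f(U)$ must be all of the open ball $\interior B_\delta$, in particular contains $0$. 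The last sentence is in fact the cleanest route: \emph{$f(U)$ is open (open mapping theorem for the non-constant holomorphic map $f$ restricted to the domain $U$), $f(\overline U) \subset B_\delta$ is compact, and $f(\partial U) \subset S_\delta$; an open subset of $B_\delta$ whose boundary lies in $S_\delta$ must be all of $\interior B_\delta$}, hence $0 \in f(U)$, i.e. $U$ contains a zero. I would write up exactly this, with the non-emptiness of the interior handled first as in the previous paragraph, and flag the open-mapping theorem for holomorphic maps $\C^n\to\C^n$ (which requires the map to be non-degenerate, handled by the identity principle ruling out the degenerate locus being everything) as the one technical point needing care.
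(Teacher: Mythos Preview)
Your approach to part (i) via the open mapping theorem is genuinely different from the paper's and, once patched, more direct. The paper proves that islands with interior contain zeros by a Hessian--signature argument (Proposition~\ref{prop:islands}: a non-degenerate interior minimum of $|g|^2$ must be a zero, because otherwise the Hessian restricted to $\ker d_qg$ is the real part of a complex quadratic form and hence has signature zero) combined with a transversality perturbation to the Morse case. Your route---$f(U)$ is open, $f(U)\subset \interior B_\delta$, $\partial f(U)\subset S_\delta$, hence $f(U)=\interior B_\delta\ni 0$---avoids all of that. However, your justification of openness is wrong: ``non-degenerate, handled by the identity principle'' is not enough (e.g.\ $f(z,w)=(z,zw)$ has non-identically-vanishing Jacobian but is not open). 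The correct observation is that $f|_U:U\to\interior B_\delta$ is \emph{proper}, since $|f|=\delta$ on $\partial U$; hence its fibers are compact analytic subsets of the bounded domain $U\subset\C^n$, hence finite; an equidimensional finite holomorphic map is open (this is exactly \cite[Proposition~3, Section~2.1.3]{DAngelo}, which the paper itself invokes elsewhere). Then $f(U)$ is open and closed in the connected set $\interior B_\delta$, so $f(U)=\interior B_\delta$.

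Your argument for (ii), by contrast, has a real gap. You invoke a maximum/minimum principle to rule out a compact component $K\subset\{|f|=\delta\}$, but the maximum principle controls maxima of the plurisubharmonic function $|f|^2$, not minima, and there is \emph{no} minimum principle for $|f|$ when $f$ is vector-valued: unlike the scalar case, neither $-\log|f|$ nor $|f|^{-2}$ is plurisubharmonic in general. (Your parenthetical ``an island disjoint from $S_r$ is automatically open in $\C^n$'' is also false: islands are components of the closed set $\{|f|\le\delta\}$ and are compact, not open.) The paper handles (ii) by a limiting trick: if $K$ has empty interior, look at the components $K_i\supset K$ of $\{|f|\le\delta+1/i\}$; these do have non-empty interior (they contain an open neighbourhood of $K$), are eventually islands, and hence by the already-established (i) contain zeros $z_i$; a subsequential limit $z_\infty\in K$ then contradicts $|f|\equiv\delta$ on $K$. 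This limiting argument grafts perfectly onto your (corrected) open-mapping proof of (i), so your overall program is salvageable---but as written, (ii) does not go through.
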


Let $\zeta^0(f,r,\delta)$  denote the number of islands, and let   $\tau(f,r,\delta)$ denote the total number of zeros of $f$ with multiplicities {\em contained in islands} in $B_r.$ Since an island can contain  more than one zero,  clearly \[ \zeta^0(f,r,\delta) \leq \tau(f,r,\delta). \]

The following result is a consequence of Rouch\'e's theorem for analytic mappings, see
Section \ref{sec: counting islands}.  

\begin{theorem}\label{prop: rouche}
	For all $a>1$, $r >0$, and $\delta \in (0, \mu(f, ar)/2)$

	\begin{equation}
\label{eq:islandscount}
\tau(f,r,\delta) \leq C_1 \big(\log(\mu(f, ar)/\delta)\big)^{n},
\end{equation}
	where $C_1$ depends only on $a$ and $n$.
\end{theorem}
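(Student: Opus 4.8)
The plan is to count zeros contained in islands by a \emph{dimensional reduction} argument combined with a one-variable Jensen/Rouch\'e estimate, iterated $n$ times. The key point is that an island $U$ of $f^{-1}(B_\delta)\cap B_r$ is a relatively compact open set (by Theorem \ref{thm: islands}) on which $|f|\le\delta$, while on $\partial U$ we have $|f|=\delta$; hence by Rouch\'e's theorem applied to the analytic map $f$ and the constant map $\delta\cdot(\text{unit vector})$ (or more precisely by the argument principle for the coordinate functions), the number of zeros of $f$ in $U$ counted with multiplicity equals the number of solutions of $f=w_0$ in $U$ for any regular value $w_0\in B_\delta$. Summing over islands, $\tau(f,r,\delta)$ is bounded by the number of preimages $f^{-1}(w_0)\cap B_r$ lying in $\bigcup(\text{islands})$, for a suitably generic small $w_0$; so it suffices to bound, for generic $w_0$ with $|w_0|\le\delta$, the cardinality of $f^{-1}(w_0)$ inside the union of islands.

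Next I would estimate this preimage count by slicing. Write $f=(f_1,\dots,f_n)$ and $w_0=(w_0^{(1)},\dots,w_0^{(n)})$. The idea is: first solve $f_1(z)=w_0^{(1)}$, which is one equation in $\C^n$, cutting out an analytic hypersurface $V_1$; then restrict $f_2$ to $V_1$ and count solutions of $f_2=w_0^{(2)}$ on $V_1$, and so on. At each stage one needs a one-variable-type Jensen estimate: the number of relevant components is governed by $\log(\mu(f,ar)/\delta)$. Concretely, one can foliate $\C^n$ by complex lines and use that along each line $\ell$, the restriction $f_j|_\ell$ is a one-variable analytic function whose zero count inside the disc $\ell\cap B_r$ is controlled by $\log(\mu(f_j,ar)/\delta) \le \log(\mu(f,ar)/\delta)$ via \eqref{eq-Jensen-vsp}; integrating/maximizing over the $(n-1)$-parameter family of lines and iterating over the $n$ coordinates produces the exponent $n$. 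Alternatively, and perhaps more cleanly, one invokes a transcendental B\'ezout-type bound for the Nevanlinna characteristic of $(f_1,\dots,f_n)$ restricted to the union of islands, where the defect from the unbounded ``peninsula'' part is exactly what the island hypothesis removes --- this is where Theorem \ref{thm: islands} is essential, since it guarantees the relevant sets are compactly contained in $B_r$ and carry genuine zeros.

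The main obstacle I anticipate is making the slicing argument uniform: after solving $f_1=w_0^{(1)}$ one works on the hypersurface $V_1$, which is not a ball and on which one does not a priori have a clean maximum-modulus bound for $f_2$; one must control the geometry (volume, number of components, how $V_1$ meets the spheres $S_\rho$) well enough that the inductive hypothesis applies with only a bounded loss in the constant and a shift $r\mapsto a'r$ in the radius. Handling the multiplicities correctly --- an island may contain a high-order zero, and the Rouch\'e count must capture this --- requires choosing $w_0$ to be a regular value avoiding a measure-zero set, which is fine, but one must check that the chosen $w_0$ can be taken simultaneously generic for all the finitely many (in each $B_r$) islands and all the intermediate restricted maps. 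I expect that the book-keeping of constants and the verification that genericity is achievable uniformly is the technically delicate part, while the conceptual skeleton --- Rouch\'e to replace zeros by preimages of a regular value, then iterated one-variable Jensen along a foliation by lines --- is robust and yields precisely the power $n$ rather than $2n-1$, the improvement over Theorem \ref{thm: analytic bezout} coming from the fact that we only count honest zeros inside genuine islands rather than all connected components meeting $B_r$.
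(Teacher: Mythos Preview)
Your proposal has a genuine gap, and the obstacle you yourself flag is in fact fatal for the approach as stated. The slicing step --- solve $f_1=w_0^{(1)}$, then restrict $f_2$ to the resulting hypersurface $V_1$, and iterate --- runs into exactly the difficulty that makes the transcendental B\'ezout problem hard: the intermediate variety $V_1$ is not a ball, carries no a priori degree bound, and there is no Jensen-type estimate for $f_2|_{V_1}$. Knowing that each complex line meets $V_1$ in at most $O(\log(\mu/\delta))$ points does not by itself control the number of zeros of $f_2$ on $V_1 \cap B_r$; ``integrating/maximizing over the $(n-1)$-parameter family of lines'' does not produce a finite count, and there is no mechanism here that turns $n$ separate one-variable bounds into a product bound. (In the polynomial case this works precisely because $V_1$ is algebraic of bounded degree, and that degree propagates through the intersection --- this is the content of B\'ezout's theorem, which is not trivially reproved by slicing.) Your alternative, invoking a Nevanlinna-characteristic bound restricted to the union of islands, is not a concrete argument; the known results in that direction (Carlson, Griffiths--King) give exponents of the form $2n-1$ or worse, not $n$.

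The paper's proof bypasses all of this with a single idea you did not consider: approximate $f$ on $B_r$ by its Taylor polynomial $p$ of degree $<k$, with $k$ chosen so that $|f-p|<\delta/2$ on $B_r$ (by the Cauchy estimates of Proposition~\ref{lma: Cauchy harmonic}, this forces $k=O(\log(\mu(f,ar)/\delta))$). On the boundary of any island $K$ one has $|f|=\delta>|f-p|$, so Rouch\'e's theorem for maps $\C^n\to\C^n$ gives that $f$ and $p$ have the same number of zeros (with multiplicity) in $K$; moreover each such zero of $p$ lies in an island of $p^{-1}(B_{\delta/2})\cap B_r$. Hence $\tau(f,r,\delta)\le\tau(p,r,\delta/2)\le(\text{total }\#\text{ zeros of }p)\le k^n$ by the classical B\'ezout theorem. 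The exponent $n$ thus comes for free from algebraic B\'ezout applied to the polynomial approximant, not from any inductive slicing of $f$ itself.
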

Note that in view of Remark \ref{rmk:expn},  estimate  \eqref{eq:islandscount} is sharp.

\begin{rmk}
Estimates analogous to \eqref{eq:islandscount} for the usual count of zeros have been proven under positive lower bounds on the Jacobian of $f$ in \cite{Ji,LiTaylor-TB,Li-TB}. Upper bounds in Theorems \ref{thm: analytic bezout} and \ref{prop: rouche} apply to all holomorphic mappings $f: \C^n \to \C^n.$ A detailed comparison of our results with those in \cite{Ji,LiTaylor-TB,Li-TB} is carried out in Section \ref{sec: CA example}. In particular, we give a different proof of a result from \cite{LiTaylor-TB} using Theorem \ref{prop: rouche}.
\end{rmk}

In general, $\zeta(f,r,\delta)$ and $\zeta^0(f,r,\delta)$ can behave rather differently. Indeed, this is the case for the  Cornalba-Shiffman example, as we discuss below.

Let $F$ be a Cornalba-Shiffman map defined previously. It is natural to ask what is the possible growth of the coarse count of islands $\zeta^0(F,r,\delta).$  We show that, as opposed to $\zeta(F,r,\delta)$, $\zeta^0(F,r,\delta)$ can grow arbitrarily slow, with an upper bound depending on $\mathfrak{c}.$ More precisely, in Section \ref{sec: counting islands} we prove the following theorem.

\begin{theorem}\label{Proposition_Islands_Count}
Let $\lambda, \delta >0,$ $l\geq 1$ an integer and denote by $\exp_2(x)=2^x.$ If $c_i=\lfloor\underbrace{\exp_2 \ldots \exp_2}_{l \text{ times}} (\lambda i ) \rfloor$ then there exists a constant $m_{l,\lambda,\delta}$ such for all $r\geq \underbrace{\exp_2 \ldots \exp_2}_{l+1 \text{ times}}(1)$ it holds
$$\zeta^0(F,r,\delta)\leq \frac{1}{\lambda}\underbrace{\log \ldots \log}_{l+1 \text{ times}} r + m_{l,\lambda,\delta}.$$
In particular, for $c_i=2^{2^i}$ as in \cite{CornalbaShiffman}, we have that $\zeta^0(F,r,\delta)=O(\log \log \log r).$
\end{theorem}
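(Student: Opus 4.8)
\textbf{Proof proposal for Theorem \ref{Proposition_Islands_Count}.}

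The plan is to combine two ingredients: the geometric stabilization picture for the sublevel set $\{|F| \leq \delta\}$ described after Theorem \ref{Thm:CS_Coarse_Count}, and the uniform upper bound on the number of zeros contained in \emph{islands} provided by Theorem \ref{prop: rouche}. First I would recall, or re-derive, the structural fact underlying Theorem \ref{Thm:CS_Coarse_Count}: there is a threshold $k_0=k_0(\delta)$ so that for every $k \geq k_0$ the set $\{|F| \leq \delta\}$ contains the whole segment $\{2^k\} \times [0,1]$, and moreover the connected component of $\{|F|\le\delta\}$ containing this segment is trapped between the hyperplanes $\{\mathrm{Re}(z) = 2^{k-1}+2^{k-2}\}$ and $\{\mathrm{Re}(z) = 2^k + 2^{k-1}\}$. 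In particular this component, call it $\Gamma_k$, contains \emph{all} $c_k$ zeros of $F$ lying over $z = 2^k$, and distinct $\Gamma_k$ are separated. Consequently, inside a ball $B_r$ with $2^k \leq r$, the zeros on the segments $\{2^k\}\times[0,1]$ for $k \le \log r$ are organized into components, and a component $\Gamma_k$ with $k \geq k_0$ is an \emph{island} of $F^{-1}(B_\delta)\cap B_r$ precisely when it does not meet $S_r$, which (given the trapping between hyperplanes) certainly holds once $2^k + 2^{k-1} < r$, i.e. for $k \le \log r - 1$ roughly.

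Second, the key estimate: if $\Gamma_k$ is an island in $B_r$ for some $k \ge k_0$, then it contains $c_k$ zeros of $F$ (counted with multiplicity these are simple, so $c_k$), and hence $\tau(F,r,\delta) \geq c_k$. Applying Theorem \ref{prop: rouche} with, say, $a = 2$, we get $c_k \leq \tau(F,r,\delta) \leq C_1 (\log(\mu(F,2r)/\delta))^2$. By Theorem \ref{Thm:CS_Coarse_Count}, $\log \mu(F, 2r) = \Theta((\log r)^2)$, so the right-hand side is $O((\log r)^4)$, say bounded by $C_2 (\log r)^4$ for $r$ large. Therefore any index $k$ for which $\Gamma_k$ is an island in $B_r$ must satisfy $c_k \le C_2 (\log r)^4$. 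Now substitute the given growth $c_i = \lfloor \exp_2^{(l)}(\lambda i)\rfloor$: the inequality $\exp_2^{(l)}(\lambda k) \lesssim (\log r)^4$ is solved by applying $\log$ to both sides $l$ times, yielding $\lambda k \le \log^{(l)}\big(C_2(\log r)^4\big) = \log^{(l+1)} r + O(1)$, where the $O(1)$ absorbs the constant $C_2$ and the factor $4$ after one more logarithm (using that $\log^{(l)}$ of a polynomial in $\log r$ differs from $\log^{(l+1)} r$ by a bounded amount once $l\ge 1$). Hence $k \le \frac{1}{\lambda}\log^{(l+1)} r + O(1)$.

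To finish, I would count the islands. The islands of $F^{-1}(B_\delta) \cap B_r$ split into those among the ``large-$k$'' components $\Gamma_k$ with $k \ge k_0$, and the remaining ones. The first family is indexed by a subset of $\{k_0, k_0+1, \dots\}$, and by the previous paragraph every such index is at most $\frac{1}{\lambda}\log^{(l+1)} r + O(1)$; hence there are at most $\frac{1}{\lambda}\log^{(l+1)} r + O(1)$ of them. The second family — islands not of the form $\Gamma_k$, or with $k < k_0$ — is confined to a bounded region of $\C^2$ independent of $r$ (roughly, $\mathrm{Re}(z) \lesssim 2^{k_0}$), because for $k \ge k_0$ the component $\Gamma_k$ exhausts the relevant part of the sublevel set between consecutive hyperplanes, so there is nothing else out there; thus their number is bounded by a constant $m_{l,\lambda,\delta}$ depending only on $\delta$ (via $k_0$), $l$, and $\lambda$. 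Adding the two contributions gives $\zeta^0(F,r,\delta) \le \frac{1}{\lambda}\log^{(l+1)} r + m_{l,\lambda,\delta}$ for all $r \ge \exp_2^{(l+1)}(1)$, as claimed; the specialization $c_i = 2^{2^i}$ corresponds to $l = 2$, $\lambda = 1$ (up to the harmless discrepancy between $2^{2^i}$ and $\exp_2\exp_2(i)$), giving $\zeta^0(F,r,\delta) = O(\log\log\log r)$.

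The main obstacle I anticipate is not the counting arithmetic but making the geometric stabilization statement precise enough to license two things simultaneously: (i) that for $k \ge k_0$ the component $\Gamma_k$ really does contain all $c_k$ zeros over $z = 2^k$ (so that Theorem \ref{prop: rouche} forces $c_k$ to be small), and (ii) that there are no ``extra'' islands hiding in the far region, so that the bounded-region bound $m_{l,\lambda,\delta}$ is legitimate. Both of these should follow from the same analysis of $\{|F| \le \delta\}$ carried out in Section \ref{Section:CS_Proofs} for Theorem \ref{Thm:CS_Coarse_Count} — in particular from the estimates showing $|F| \le \delta$ on $\{2^k\}\times[0,1]$ and $|F| > \delta$ on the separating hyperplanes $H_k$ — so the real work is to extract from that analysis the slightly stronger claim that each inter-hyperplane slab contains exactly one component of the sublevel set and that this component carries exactly the $c_k$ zeros. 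Once that is in hand, the rest is the elementary iterated-logarithm computation sketched above.
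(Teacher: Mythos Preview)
Your approach is genuinely different from the paper's, and largely correct, but there is one concrete slip that affects the case $l=1$.

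The paper never invokes Theorem~\ref{prop: rouche}. Instead it argues directly from Lemma~\ref{Lemma_Peninsula}: for $i$ large the sublevel set $\{|F|\le\delta\}$ contains not just $\{2^i\}\times[0,1]$ but the much longer segment $\{2^i\}\times[0,b_i]$ with $b_i = 2^{c_i-\frac{(i-1)i}{2c_i}}\delta^{1/c_i}\approx 2^{c_i}$. Hence the component through $\{2^i\}\times[0,1]$ reaches $w$-coordinate $b_i$ and cannot be an island in $B_r$ unless $b_i \le r$, which forces $c_i \le \log r + O(1)$ directly. Inverting this by $l$ logarithms gives $k \le \frac{1}{\lambda}\log^{(l+1)} r + O(1)$ with the stated constant $1/\lambda$ for every $l\ge 1$. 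In short: the paper shows the components are \emph{long}, whereas you show they are \emph{fat} (contain many zeros); both obstruct being an island, but the first gives a sharper inequality on $c_k$.

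Your route---bound $c_k \le \tau(F,r,\delta)$ and then $\tau \le C_2(\log r)^4$ via Theorem~\ref{prop: rouche} and Proposition~\ref{Estimate_of_Max}---is valid, but yields only $c_k \le C_2(\log r)^4$. For $l\ge 2$ this is harmless, since the factor $4$ is absorbed after the second iterated logarithm. For $l=1$, however, your claim that $\log^{(l)}\big(C_2(\log r)^4\big) = \log^{(l+1)} r + O(1)$ is false: one gets $\log(C_2(\log r)^4) = 4\log\log r + O(1)$, hence $k \le \frac{4}{\lambda}\log\log r + O(1)$, not $\frac{1}{\lambda}\log\log r + O(1)$. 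So your argument proves the theorem as stated only for $l\ge 2$ (which includes the Cornalba--Shiffman case $c_i=2^{2^i}$); for $l=1$ it gives the right order with a worse constant.

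As for your anticipated obstacles: (i) is exactly Corollary~\ref{Merging_zeros}, so no extra work is needed. Obstacle (ii) dissolves once you use Theorem~\ref{thm: islands}: every island contains a zero of $F$, and all zeros lie on the segments $\{2^k\}\times[0,1]$; for $k\ge k_0$ these are absorbed into a single $\Gamma_k$, so any island not of the form $\Gamma_k$ must contain a zero with $k<k_0$, and there are at most $\sum_{k<k_0} c_k$ of those---a constant depending only on $l,\lambda,\delta$. You do not need the stronger (and unproven) claim that each slab contains exactly one component of the sublevel set.
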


From the geometric perspective, the slow growth of $\zeta^0(F,r,\delta)$ is due to elongation of $\{|F|\leq \delta\}$ in the $w$-direction. Namely, as $r$ increases, new groups of zeros of roughly the same modulus $r$ appear, while the components of $\{|F|\leq \delta\}$ which contain these zeros grow in the $w$-direction faster than $r$ (the diameter of their $w$-projection grows faster than $r$). Hence, it takes larger $r$ for a component of $\{|F|\leq \delta \}$ to be fully contained in $B_r$ i.e. to contribute to $\zeta^0(F,r,\delta).$

\subsection{Discussion}

Below we discuss some extensions of our results as well as directions for further research.

\subsubsection{Analytic mappings from $\C^n$ to $\C^k$}\label{sec: discussion zeta m}

A bound analogous to Theorem \ref{thm: analytic bezout} holds for entire mappings $f: \C^n \to \C^k.$ It appears as Theorem \ref{thm: analytic bezout higher homol} in Section \ref{sec: higher}. To have geometric meaning in this case, the definition of $\zeta(f,r,\delta)$ should be generalized. To this end, we look at coarse homology groups of the zero set: for $0 \leq d \leq 2n-1,$ set \[\zeta_d(f,r,\delta) = \dim \mrm{Im} \big( H_d(\{f = 0\} \cap B_r) \to H_d(\{|f|\leq \delta\} \cap B_r) \big).\] Considering generic algebraic maps $f,$ we expect only $0 \leq d \leq n-k$ to have geometric significance. Of particular interest is $d = n-k,$ since this is the dimension where vanishing cycles appear. We prove the upper bound \[\zeta_d(f,r,\delta) \leq C \big(\log(\mu(f,ar)/\delta) \big)^{2n},\]
under the same assumptions on the parameters $a,r,\delta$ as in Theorem \ref{thm: analytic bezout}. 

\subsubsection{Affine varieties}
It would be interesting to generalize our main results to more general affine algebraic varieties $Y \subset \C^N$. The starting case would be varieties which compactify to smooth projective varieties $X \subset \C P^N$ by a normal crossings divisor $D = X \setminus Y.$ We expect that the methods of \cite{Cornalba-Griffiths} combined with those of \cite{BPPPSS}, in particular the subadditivity theorem for persistence barcodes, should be useful for this purpose. See also Section \ref{sec: persistence}.

\subsubsection{Harmonic mappings}
We expect that an analogue of Theorem \ref{thm: analytic bezout} should hold in the context of harmonic maps. Namely, suppose that $h=(h_1,\ldots,h_d): \R^d \to \R^d$ is a harmonic map in the sense that $h_j:\R^d \to \R$ is harmonic for all $j$ (this is equivalent to the map being harmonic in the variational sense \cite[Section 2.2, Examples 3, 4]{HeleinWood} where $\R^d$ is endowed with the standard Euclidean metric). In this case, we expect that the coarse counts $\zeta(h,r,\delta), \zeta^0(h,r,\delta)$ being defined analogously to the above, satisfy the upper bound \begin{equation}\label{eq: h bezout} \zeta^0(h,r,\delta) \leq \zeta(h,r,\delta) \leq C_2 \big(\log(\mu(h, C_1 ar)/\delta) \big) ^{d} \;, \end{equation} for all $a>1$, $r >0$, and $\delta \in (0, \mu(h,C_1 ar)/2),$ where $C_1 \geq 1$ depends on $d$ only, and $C_2$ depends only on $a$ and $d.$ (By Liouville's theorem, the condition on $\delta$ holds for all $r$ large enough, if our mapping is not constant.) This bound is sharp asymptotically in $r$ for all fixed $\delta > 0,$ as can be seen from the example $h = (h_1, \ldots, h_d)$ where $h_i(x_1,\ldots,x_d) = (e^{x_{i+1}} \sin(x_{i}))$ for $1 \leq i \leq d-1$ and $h_d(x_1,\ldots,x_d) = (e^{x_{1}} \sin(x_{d})).$ Note that $\log \mu(h,ar)$ is closely related to the notion of the doubling index of the harmonic function (see for example \cite[Equation (12)]{Logunov-YauLower}).

An outline of the argument is as follows. First, replace Proposition \ref{prop: complex poly} by an analogous estimate for real polynomials of degree $k$ on a ball in $\R^d$ in terms of $k^d$. Second, replace Proposition \ref{lma: Cauchy harmonic} by suitable Cauchy estimates, which hold for entire harmonic maps (see for example \cite[Theorem 2.4]{HFT}, \cite[Chapter 2.2, Proof of Theorem 10]{Evans}). The rest of the argument follows our proof of Theorem \ref{thm: analytic bezout} directly. Note that the only property required from a harmonic mapping is that it satisfies Cauchy's estimates, hence inequality \eqref{eq: h bezout} should extend to a certain ``quasi-analytic" class of mappings.

It would be interesting to realize this outline and to optimize the constant $C_1 \geq 1.$


\subsubsection{Near-holomorphic mappings}

Let us also note that the proof of Theorem \ref{thm: analytic bezout} yields the following stronger result about the coarse count of zeros of continuous functions that are close to holomorphic ones.

\begin{cor}
Fix $b<1$ and $\delta>0$, and let  $h: \C^n \to \C^n$ be a continuous function such that there exists a holomorphic function $f: \C^n \to \C^n$ with $d_{C^0}(h,f) < \frac{b}{2} \delta$.  Then \[ \zeta(h,r, (1+b)\delta) \leq C  \big(\log(\mu(h, a r)/\delta)\big)^{2n-1}\;,\] where $C$ depends on $a,b,n$ only.
\end{cor}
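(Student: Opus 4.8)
The plan is to deduce this corollary from Theorem~\ref{thm: analytic bezout} applied to the holomorphic map $f$, by comparing the coarse zero count of $h$ at level $(1+b)\delta$ with that of $f$ at level $\delta$. The key observation is that a $C^0$-perturbation of size less than $\tfrac{b}{2}\delta$ moves sublevel sets around in a controlled, nested way. First I would record the two sandwiching inclusions: since $d_{C^0}(h,f)<\tfrac{b}{2}\delta$, we have
\begin{equation}\label{eq:sandwich}
f^{-1}(B_{\delta}) \ \subset\ h^{-1}(B_{(1+\tfrac{b}{2})\delta}) \ \subset\ f^{-1}(B_{(1+b)\delta})\;.
\end{equation}
Intersecting with $B_r$ preserves these inclusions. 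The middle set $h^{-1}(B_{(1+b/2)\delta})\cap B_r$ contains every zero of $h$ (as $|h|\le |h-f|+|f|$, a zero of $h$ has $|f|<\tfrac b2\delta$, so it lies in the leftmost set), but more importantly every zero of $h$ is \emph{also} a point where $|f|<\tfrac b2\delta<\delta$, so it lies in $f^{-1}(B_\delta)$.

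The next step is the component-counting argument. A connected component $\mathcal{C}$ of $h^{-1}(B_{(1+b)\delta})\cap B_r$ that contains a zero of $h$ also contains a point of $f^{-1}(B_\delta)\cap B_r$; hence $\mathcal{C}$ meets one of the connected components $\mathcal{D}$ of $f^{-1}(B_\delta)\cap B_r$ that contains a zero of $f$ — wait, $\mathcal{D}$ need not a priori contain a zero of $f$, but by the sandwich it contains a point of $f^{-1}(B_\delta)$ that is a zero of $h$; we need to know such a $\mathcal D$ is counted by $\zeta(f,r,\delta)$. This is where I would use that the zeros of $h$ in $\mathcal D$ force, via \eqref{eq:sandwich} and the fact that $f^{-1}(B_\delta)\subset f^{-1}(B_{(1+b)\delta})$, a topological-degree/Rouch\'e argument: on the component $\mathcal D$ the holomorphic map $f$ is homotopic through non-vanishing maps on $\partial$ to $h$ only if no zeros are lost, so $\mathcal D$ must in fact contain a zero of $f$ too. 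Alternatively, and more cleanly, I would instead run the entire proof of Theorem~\ref{thm: analytic bezout} verbatim, noting that it only ever uses (i) Cauchy estimates for $f$ on balls to bound its degree-like growth, and (ii) a covering/Rouch\'e count of components of a sublevel set; replacing the sublevel set $f^{-1}(B_\delta)$ by $h^{-1}(B_{(1+b)\delta})$ and using $\mu(f,ar)\le \mu(h,ar)+\tfrac b2\delta$ to convert $\mu(f,\cdot)$ into $\mu(h,\cdot)$ up to the absorbed constant, one gets the stated bound with $\log(\mu(h,ar)/\delta)$ on the right, the constant now also depending on $b$.

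Concretely the argument I would write down: apply Theorem~\ref{thm: analytic bezout} to $f$ with $\delta$ replaced by $\delta$ and $a$ unchanged, giving $\zeta(f,r,\delta)\le C(\log(\mu(f,ar)/\delta))^{2n-1}$; then show $\zeta(h,r,(1+b)\delta)\le \zeta(f,r,\delta)$ using \eqref{eq:sandwich} plus the observation that distinct islands/components of $h^{-1}(B_{(1+b)\delta})\cap B_r$ carrying zeros of $h$ must lie in distinct components of $f^{-1}(B_\delta)\cap B_r$ carrying zeros of $f$ (injectivity of the assignment $\mathcal C\mapsto \mathcal D$ follows because two disjoint components of the bigger set cannot both be joined through the smaller set — the smaller set being contained in the bigger one); finally bound $\mu(f,ar)\le \mu(h,ar)+\tfrac b2\delta \le \tfrac{3}{2}\mu(h,ar)$ when $\delta<\mu(h,ar)$ (which holds for $r$ large by Liouville, and $\delta$ is fixed), absorbing the factor $\tfrac32$ into $C$. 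Since $b$ enters only through the ratio $(1+b)$ between the two $\delta$-levels and through the absorbed constant, $C$ depends on $a,b,n$ only.

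The main obstacle is the injectivity of the component-assignment $\mathcal C\mapsto\mathcal D$ — i.e.\ showing no two zero-carrying components of the larger sublevel set $h^{-1}(B_{(1+b)\delta})$ collapse to the same component of $f^{-1}(B_\delta)$. Here one must be careful about the direction of the inclusions: $f^{-1}(B_\delta)\subset h^{-1}(B_{(1+b)\delta})$, so a single component $\mathcal D$ of the \emph{smaller} set lies inside a \emph{single} component $\mathcal C$ of the larger set, which gives surjectivity-type control, not injectivity. The correct statement to extract is therefore the reverse count: each zero-carrying component $\mathcal D$ of $f^{-1}(B_\delta)\cap B_r$ is contained in exactly one component $\mathcal C$ of $h^{-1}(B_{(1+b)\delta})\cap B_r$, and every zero of $h$ inside $\mathcal C$ already lies in some such $\mathcal D$ (by $|f|<\tfrac b2\delta$ at zeros of $h$) — but a priori several $\mathcal D$'s could sit in one $\mathcal C$, so we would get $\zeta(h,r,(1+b)\delta)\le \zeta(f,r,\delta)$ only if every $\mathcal C$ is hit. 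To close this I would argue that any zero-carrying $\mathcal C$ contains a zero of $h$, which lies in some zero-carrying $\mathcal D\subset \mathcal C$ — this uses Theorem~\ref{thm: islands} (islands contain zeros of $f$) applied to $f$ on the relevant piece, or again a direct Rouch\'e comparison showing a zero of $h$ in $\mathcal C$ forces a zero of $f$ nearby in the same component. Pinning down this last point rigorously, rather than waving at ``Rouch\'e'', is the step that will take genuine care.
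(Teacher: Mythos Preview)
Your ``alternative'' approach --- re-running the proof of Theorem~\ref{thm: analytic bezout} with $h^{-1}(B_{(1+b)\delta})$ in place of $f^{-1}(B_\delta)$ --- is exactly what the paper does, and it works cleanly. Concretely: approximate the holomorphic $f$ by a proper polynomial $p$ with $|f-p|<\delta/2$ on $B_r$ via Proposition~\ref{lma: Cauchy harmonic}; then $|h-p|\le |h-f|+|f-p|<\tfrac{b}{2}\delta+\tfrac{\delta}{2}=\tfrac{(1+b)\delta}{2}$, so Lemma~\ref{lema: Critical_Components} applies verbatim to the pair $(h,p)$ at level $(1+b)\delta$. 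The degree $k$ is chosen exactly as before in terms of $\mu(f,ar)$, and $\mu(f,ar)\le \mu(h,ar)+\tfrac{b}{2}\delta$ converts this to $\mu(h,ar)$ with a $b$-dependent constant.

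Your ``concrete'' approach --- deducing the corollary from the \emph{statement} of Theorem~\ref{thm: analytic bezout} via an inequality $\zeta(h,r,(1+b)\delta)\le\zeta(f,r,\delta)$ --- has a genuine gap that cannot be closed, because that inequality is simply false. The obstruction is exactly the peninsula phenomenon you worried about. Take $n=1$, $f(z)=z-0.6$, $h(z)=z-0.49$, $r=0.5$, $\delta=0.5$, $b=0.5$: then $d_{C^0}(h,f)=0.11<\tfrac{b}{2}\delta=0.125$, the zero $0.49$ of $h$ lies in $B_{0.5}$ so $\zeta(h,0.5,0.75)=1$, but the unique zero $0.6$ of $f$ lies outside $B_{0.5}$ so $\zeta(f,0.5,0.5)=0$. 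Neither Rouch\'e nor Theorem~\ref{thm: islands} helps here: both require the relevant component of $f^{-1}(B_\delta)\cap B_r$ to be an island, and in this example it is a peninsula. The point of Lemma~\ref{lema: Critical_Components} is precisely that it accounts for this boundary effect by also counting connected components of $\Crit(|p|^2|_{\partial B_r})$, not just zeros of $p$; this is what produces the exponent $2n-1$ rather than $n$, and it is not visible at the level of the black-box statement of Theorem~\ref{thm: analytic bezout}.

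So: drop the component-comparison argument entirely and just write out the two-line reduction to Lemma~\ref{lema: Critical_Components} sketched above.
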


\subsubsection{A dynamical interlude}
A dynamical counterpart of the transcendental B\'{e}zout problem is the
count of periodic orbits of entire maps $f: \C^n \to \C^n$. Here by a $k$-periodic
orbit we mean a fixed point of the iteration $f^{\circ k}= f \circ \cdots \circ f$ ($k$ times).
There exists a vast literature on the orbit growth of algebraic maps $f$
(see e.g. \cite{Artin-Mazur}). For instance, it follows from the B\'{e}zout theorem  that if the components of $f$ are generic polynomials of degree $\leq d$, the number of $k$-periodic orbits does not exceed $d^{kn}$. Can one expect a bound on the number of $k$-periodic orbits in the ball of radius $r$ in terms of the maximum modulus function $\mu(f,r)$? The naive answer is ``no" due to the Cornalba-Shiffman examples. Nevertheless, Theorem \ref{thm: analytic bezout} 
above readily yields
such a bound on the  {\it coarse count} $\zeta (f_k,r,\delta)$, where
$$f_k(z):= f^{\circ k}(z) -z\;.$$
One can check that the maximum modulus function behaves nicely under the
composition and the sum:
$$\mu( f \circ g, r) \leq \mu( f, \mu(g,r)),  \;\;\; \mu(f+g,r) \leq \mu(f,r)+\mu(g,r)\;.$$
Fix $a>1$ and $\delta >0$, set $\tilde \mu(r):= \mu(f, r)$, and put
$$\mu_k(r) = \tilde \mu^{\circ k}(r)+r\;.$$
By Theorem \ref{thm: analytic bezout} we have the desired estimate
\begin{equation} \label{eq-dynamics-main}
\zeta (f_k,r,\delta) \leq C \max\left( \left( \log \left( \frac{\mu_k(a r)}{\delta}\right) \right)^{2n-1}, 1\right)\;.
\end{equation}

A few questions are in order.

\begin{question} Can one find a transcendental entire map $f$ for which
estimate \eqref{eq-dynamics-main} is sharp?
\end{question}

\noindent
A natural playground for testing this question are transcendental H\'{e}non maps
whose entropy as restricted to a family of concentric discs grows arbitrarily fast
\cite{Arosio-etal}.

Further, recall that a $k$-periodic orbit of an entire map $f: \C^n \to \C^n$
 is called {\it primitive} if it is not $m$-periodic with
$m < k$. Denote by $\nu_k(f,r)$ the number of primitive $k$-periodic orbits lying in the ball of radius $r$.

\begin{question} Does there exist a transcendental entire map $f$ of order $0$ (i.e., the modulus $\mu(f,r)$ grows slower than $e^{r^\epsilon}$ for every $\epsilon >0$) such that
$\nu_k(f,r)$ grows arbitrarily fast in $k$ and $r$?
\end{question}

For instance, taking $f(z) = F(z)+z$, where $F$ is a Cornalba-Shiffman map, we see that $\nu_1(f,r)$ can grow arbitrarily fast.  Can one generalize this construction to $k \geq 2$?

\medskip

Finally, let us mention that the failure of the transcendental B\'{e}zout theorem
appears as one of the substantial difficulties in the work \cite{Gutman-etal} dealing
with a dynamical problem of a completely different nature, namely with embeddings of $\Z^k$-actions into the shift action on the infinite dimensional cube (see (2) on p. 1450 in \cite{Gutman-etal}).  In particular, the authors analyze the structure of zeroes of so-called  {\it tiling-like band-limited maps} (see p.1477 and Lemma 5.9). It would be interesting to
perform our coarse count of zeroes (i.e., to calculate $\zeta$) for this class of examples.


%
%

\section*{Organization of the paper} In Section \ref{sec: proof main} we prove Theorem \ref{thm: analytic bezout} providing our solution of the persistent transcendental B\'ezout problem. This result is extended
in section \ref{sec: higher} to maps $\C^n \to \C^m$ and higher homology groups, and in Section \ref{sec: persistence}
it is reformulated and generalized in the context of topological persistence. Section \ref{sec: zeros and islands}
contains a proof of Theorem \ref{thm: islands} on the structure of islands. 

In Sections \ref{Section:CS_Proofs} and  \ref{sec: counting islands} we study the Cornalba-Shiffman example and prove Theorems \ref{Thm:CS_Coarse_Count},\ref{prop: rouche}, and \ref{Proposition_Islands_Count}.

A comparison of our results with an earlier work of Li and Taylor \cite{LiTaylor-TB} on the transcendental B\'ezout problem can be found in Section \ref{sec: CA example}.

\section*{Acknowledgments}
We thank Misha Sodin and Steve Oudot for illuminating discussions, and
the anonymous referee for numerous useful comments.

\section{Proofs}
\subsection{Proof of Theorem \ref{thm: analytic bezout}}\label{sec: proof main}


In order to prove Theorem \ref{thm: analytic bezout} we will approximate an analytic map by a polynomial.  To this end, we first recall a version of the classical Cauchy estimates for complex analytic mappings.

\begin{prop}\label{lma: Cauchy harmonic}
	Let $f: \C^n \to \C^m$ be a complex analytic mapping, $a>1,$ and $R_k = f-p_k$ be the Taylor remainder for the approximation of $f$ by the Taylor polynomial mapping $p_k$ at $0$ of degree $<k.$ Then for all $r>0, k \geq 0,$ \[ \mu(R_k,r)  \leq C_{a} a^{-k} \mu(f,ar)\] for the constant $C_{a} = \frac{a}{a-1}$ depending only on $a.$
\end{prop}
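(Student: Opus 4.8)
The plan is to reduce this several-variable estimate to the classical one-variable Cauchy inequality by slicing along complex lines through the origin, while handling the vector-valued target by pairing against unit covectors in $\C^m$, so that no dimension-dependent constant appears.

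First I would fix a point $z_0 \in B_r$ and a covector $w \in \C^m$ with $|w| = 1$, write $\rho := |z_0| \le r$, and choose a unit vector $v \in \C^n$ with $z_0 = \rho v$ (if $z_0 = 0$ take $v$ arbitrary). Consider the entire one-variable function $h(t) := \sum_{i=1}^m f_i(tv)\, w_i$. Grouping the Taylor expansion of $f$ at $0$ by homogeneous degree shows that the degree-$<k$ Taylor polynomial $p_k$ of $f$ restricts along this line to the degree-$<k$ Taylor polynomial of $t \mapsto f(tv)$; hence the Taylor series of $h$ truncated below degree $k$ equals $t \mapsto \sum_i R_{k,i}(tv)\, w_i$, and in particular $\sum_i R_{k,i}(z_0) w_i = \sum_{j \ge k} a_j \rho^j$, where $a_j$ are the Taylor coefficients of $h$ at $0$.

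Next I would invoke the one-variable Cauchy inequalities on the circle $|t| = ar$: $|a_j| \le (ar)^{-j}\max_{|t|=ar}|h(t)|$, and since $|h(t)| \le |f(tv)|\,|w| \le \mu(f,|t|)$ we get $\max_{|t|=ar}|h| \le \mu(f,ar)$. As $\rho/(ar) \le 1/a < 1$, summing the geometric tail gives
\[ \Big|\sum_{i=1}^m R_{k,i}(z_0)\, w_i\Big| \;\le\; \mu(f,ar) \sum_{j\ge k}\Big(\frac{\rho}{ar}\Big)^j \;\le\; \mu(f,ar)\,\frac{a^{-k}}{1 - a^{-1}} \;=\; \frac{a}{a-1}\,a^{-k}\,\mu(f,ar). \]
Taking the supremum over unit covectors $w$ yields $|R_k(z_0)| \le \frac{a}{a-1}a^{-k}\mu(f,ar)$, and then the supremum over $z_0 \in B_r$ gives the claim.

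The argument is essentially routine; the only points demanding care are the bookkeeping that identifies the restriction of $p_k$ with the one-variable Taylor polynomial of $h$, and the use of the covector pairing in place of a componentwise estimate — the latter is exactly what preserves the constant $\frac{a}{a-1}$ rather than producing a spurious factor $\sqrt{m}$. One could alternatively apply the multivariate Cauchy integral formula on polydiscs, but that forces reconciling balls with polydiscs, so the line-slicing route is cleaner.
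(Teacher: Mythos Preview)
Your proof is correct and follows essentially the same route as the paper: reduce to one complex variable by slicing along lines $t\mapsto tv$ through the origin, pair with a unit vector in $\C^m$ to obtain a scalar holomorphic function, and then apply the one-variable Cauchy estimate on the circle of radius $ar$. The only cosmetic difference is that the paper invokes the integral remainder formula $r_k(u)=\frac{1}{2\pi i}\int_{|w|=ar} g(w)(u/w)^k(w-u)^{-1}\,dw$ directly, whereas you bound each Taylor coefficient by Cauchy's inequality and sum the resulting geometric tail; both computations yield the identical constant $\frac{a}{a-1}$.
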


We give a proof for clarity.

\begin{proof}
Let $v \in S^{2n-1} \subset \C^n$ and $u \in B_r(\C) \subset \C.$ Write a point $z$ in $B_r = B_r(\C^n)$ as $z = u v.$ Then $R_k(z) = f(uv)-p_k(uv).$ Now take $v' \in S^{2m-1}$ and set $g(u) = \langle f(uv),v'\rangle , q_k(u) = \langle p_k(uv), v' \rangle.$ Then $q_k$ is the Taylor polynomial of $g$ of degree $<k$ and $r_k = g - q_k$ is the corresponding Taylor remainder. It is enough to bound $r_k(u)$ uniformly in $v,v'$ for $u \in B_r(\C).$

Let $0<r<\rho.$ We use the following integral formula for the Cauchy remainder, see \cite[pages 125-126]{Ahlfors}: for $u,w$ with $|u|=r,$ $|w|=\rho$ \[ r_k(u) = \frac{1}{2\pi i} \int_{S^1_{\rho}} g(w) \Big(\frac{u}{w}\Big)^k \frac{1}{w-u} dw.\]  

Therefore \[ |r_k(u)| \leq \Big(\frac{r}{\rho}\Big)^k \frac{\rho}{\rho - r} \mu(g,\rho)\] and picking $\rho = ar,$ we get \[ |r_k(u)| \leq C_a a^{-k} \mu(g,ar) \leq C_a a^{-k} \mu(f,ar).\]



So taking maxima over $u, v$ and $v',$ we obtain \[ \mu(R_k,r) \leq C_a a^{-k} \mu(f,ar). \]

\end{proof}



Next, we highlight topological properties of polynomials needed for the proof of Theorem \ref{thm: analytic bezout}. 

\begin{prop}\label{prop: complex poly}
	Let $k \geq 1.$ Let $p_1,\ldots, p_n: \C^n \to \C$ be complex polynomials of degree at most $k.$ Set $p:\C^n \to \C^n,$ $p = (p_1,\ldots,p_n)$ for the induced polynomial mapping and assume $p$ is proper. 
	Let $B$ be a closed ball in $\C^n$, denote by $ h= |p|^2 $ and by $\Crit(h|_{\partial B})$ the set of critical points of $h|_{\partial B} .$ Then $p^{-1}(0)$ is a finite set of at most $k^n$ points, while $\Crit(h|_{\partial B})$ has at most  $5k(10k)^{2n-2} $ connected components.
\end{prop}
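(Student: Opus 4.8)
The plan is to prove the two assertions separately. The bound $\#p^{-1}(0)\leq k^n$ is the classical B\'ezout theorem for a proper polynomial map: since $p$ is proper, $p^{-1}(0)$ is a compact algebraic subset of $\C^n$ of dimension zero, hence finite, and by B\'ezout's theorem (for instance via the intersection-theoretic version on $\C P^n$, or the affine version with properness guaranteeing no mass escaping to infinity) its cardinality, even counted with multiplicity, is at most $\prod_{i=1}^n \deg p_i \leq k^n$. I would state this as a one-line consequence of the version of B\'ezout for properly intersecting affine varieties.

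The substantive part is the bound on the number of connected components of $\Crit(h|_{\partial B})$, where $h=|p|^2=\sum_{i=1}^n p_i\bar p_i$, a real polynomial on $\C^n\cong\R^{2n}$ of degree at most $2k$, and $\partial B$ is a $(2n-1)$-sphere. The key tool will be a real-algebraic bound on the number of connected components of a real algebraic set in terms of the degrees of the defining polynomials — the Oleinik--Petrovsky--Thom--Milnor type estimates, in the sharp quantitative form due to Milnor and Thom (see Milnor, \emph{On the Betti numbers of real varieties}). After a translation and scaling we may take $\partial B=S^{2n-1}_\rho$, the sphere of radius $\rho$ centered at the center of $B$; using a Lagrange-multiplier description, the critical set $\Crit(h|_{\partial B})$ is the zero locus in $\R^{2n}$ (or in $\R^{2n+1}$ after introducing the multiplier, then projecting) of the sphere equation together with the $2n-1$ equations saying that $\nabla h$ is parallel to the radial vector, i.e. the vanishing of the $2\times 2$ minors of the $2\times 2n$ matrix with rows $\nabla h$ and the position vector. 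Each such minor has degree $\leq (2k-1)+1 = 2k$, and the sphere equation has degree $2$. To get a hypersurface whose real points contain $\Crit(h|_{\partial B})$ I would take the sum of squares of these minors plus a suitable multiple; alternatively, and more cleanly, one intersects $\partial B$ with the single real hypersurface $\{G=0\}$ where $G=\sum_{j} M_j^2$ and $M_j$ are the relevant $2\times 2$ minors, so that $\Crit(h|_{\partial B}) = \partial B \cap \{G=0\}$, with $\deg G \leq 4k$ after the squaring — I will want to be careful to keep the degree at $2k$ rather than $4k$ by using Milnor's bound directly on the intersection of the quadric $\partial B$ with the $2\times 2$ minor equations (degree $2k$) rather than on a single squared-up hypersurface, since the squaring doubles the degree and would degrade the constant.

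Concretely, I would invoke the following: if $V\subset\R^{2n}$ is defined by polynomial equations of degrees $d_1,\dots,d_m$, then the number of connected components of $V$ is at most $d(2d-1)^{2n-1}$ where $d=\max(2,d_i)$ — this is Milnor's/Thom's bound (see also Basu--Pollack--Roy). Here $V=\Crit(h|_{\partial B})$ is cut out by the quadric $\partial B$ and minors of degree $\leq 2k$, so with $d=2k$ (assuming $k\geq 1$ so $2k\geq 2$) we get at most $2k(4k-1)^{2n-1}\leq 2k(4k)^{2n-1}$ components, which is even better than the claimed $5k(10k)^{2n-2}$; I would then just check the arithmetic showing $2k(4k)^{2n-1}\leq 5k(10k)^{2n-2}$ is false for small $n$, so more likely the intended route keeps a factor from passing to an affine chart of the sphere or from the squaring, giving degree roughly $4k$ or $5k$ in an ambient $\R^{2n-1}$, producing the stated $5k(10k)^{2n-2}$. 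The main obstacle, therefore, is bookkeeping: correctly setting up the Lagrange-multiplier equations, tracking the degrees of the minors, deciding whether to work on the $(2n-1)$-sphere as a subset of $\R^{2n}$ (ambient dimension $2n$, Milnor exponent $2n-1$) or on an affine chart ($\R^{2n-1}$, exponent $2n-2$), and choosing the elimination/squaring that yields exactly the constants $5k$ and $10k$ — the geometry is standard, but matching the precise numerical bound in the statement requires care in which ambient space and which degree estimate one feeds into Milnor's theorem.
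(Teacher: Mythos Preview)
Your overall strategy is right: B\'ezout for the zero set, and Milnor's bound on Betti numbers of real varieties for the critical locus. But your Lagrange-multiplier computation in $\R^{2n}$ gives $2k(4k-1)^{2n-1}=O(k^{2n})$ components, not $O(k^{2n-1})$, so it is \emph{not} ``even better'' than the claimed $5k(10k)^{2n-2}=O(k^{2n-1})$ --- it carries one extra power of $k$, and this would propagate to an exponent $2n$ rather than $2n-1$ in the main theorem. So the dimension reduction you speculate about at the end is not just a matter of matching constants; it is essential.

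The paper does exactly what you guessed: it passes to an affine chart of $\partial B$ via stereographic projection. If $h|_{\partial B}$ is constant there is one component; otherwise choose a regular point $N\in\partial B$ of $h|_{\partial B}$ and let $\theta:\R^{2n-1}\to\partial B\setminus\{N\}$ be stereographic projection from $N$, so $x_j=\tfrac{2u_j}{|u|^2+1}$ for $1\leq j\leq 2n-1$ and $x_{2n}=\tfrac{|u|^2-1}{|u|^2+1}$. Since each coordinate is a quotient of a polynomial of degree $\leq 2$ by $|u|^2+1$, one checks that $\theta^*h=q/(|u|^2+1)^{2k}$ with $q$ a real polynomial of degree at most $4k$. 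By the quotient rule, the critical points of $\theta^*h$ are cut out by the $2n-1$ equations
\[
(\partial_{u_j}q)(|u|^2+1)-4k\,u_j\,q=0,\qquad 1\leq j\leq 2n-1,
\]
each of degree at most $4k+1\leq 5k$. Milnor's theorem in $\R^{2n-1}$ with $d=5k$ then gives the bound $5k(10k-1)^{2n-2}\leq 5k(10k)^{2n-2}$.

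So the piece you were missing is the concrete shape of the pulled-back equations: stereographic projection turns $h$ into a rational function with polynomial numerator of degree $\leq 4k$, and the critical-point equations pick up only one extra degree, landing at $5k$ in $2n-1$ variables. Your Lagrange route is a legitimate alternative but sits in one ambient dimension higher and pays a factor of $k$ for it.
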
 

\begin{proof}
First,  we estimate the number of connected components of $\Crit(h|_{\partial B}).$ If $h|_{\partial B}$ is constant the claim immediately follows. Otherwise, there exists a point $N \in \del B$ which is regular for $h|_{\del B}.$ Consider complex linear coordinates $(z_1,\ldots, z_n),$ $z_j = x_{2j-1}+i x_{2j},$ in which $B$ is the unit ball, so $\del B$ is the unit sphere, and $N$ is the base vector $(0,\ldots,0,i).$ Note that $h$ is a real polynomial of degree at most $2 k$ in $x_1,\ldots, x_{2n}.$ Following \cite{Non19}, we consider inverse stereographic projection $\theta:\R^{2n-1} \to \del B \setminus \{N\},$ $\theta(u_1,\ldots,u_{2n-1}) = (x_1,\ldots,x_{2n}),$ $x_j = \frac{2u_j}{|u|^2+1}$ for $1 \leq j \leq 2n-1,$ $x_{2n} = \frac{|u|^2-1}{|u|^2+1}.$ Then $\theta^*h = \frac{q}{(|u|^2+1)^{2k}}$ for a polynomial $q$ in $u_1,\ldots,u_{2n-1}$ of degree at most $4k.$ The critical points of $\theta^*h$ are in bijection with those of $h|_{\del B},$ and are given by $2n-1$ polynomial equations $\del_{u_j} q(u) (|u|^2+1) - 4k u_j q(u)= 0,$ $1 \leq j \leq 2n-1,$ each one of degree at most $4k+1 \leq 5k.$ By an estimate of Milnor \cite[Theorem 2]{Milnor} we obtain that the total Betti number of $\Crit(h|_{\partial B})$ is bounded by $5k(10k)^{2n-2}.$ Since 0-th Betti number counts path components, we get that the total number of path components of $\Crit(h|_{\partial B})$ is not greater than $5k(10k)^{2n-2}.$ Finally, the number of connected components is less or equal than the number of path components\footnote{In fact, in this case these two numbers are equal since $\Crit(h|_{\partial B})$ can be triangulated (see Remark \ref{rmk: Semialgebraic_triangulation}) and hence is locally path connected.} and the claim follows.

For the second part of the proposition, it is enough to notice that since $p$ is proper $p^{-1}(0)$ is compact and thus consists of a finite set of points by \cite[Theorem 14.3.1]{Rudin80}. The number of these points is bounded by $k^n$ in view of B\'ezout's theorem (see  \cite[Example 8.4.6]{Fulton-intersection} for example). 
\end{proof}

\begin{rmk}
As can be seen from the proof, the exponent $2n-1$ in Proposition \ref{prop: complex poly} is a boundary effect. We currently do not know if it can be improved.
\end{rmk}

Lastly, we formulate a lemma which connects Proposition \ref{prop: complex poly} to the coarse count of zeros of near-polynomial maps.

\begin{lemma}\label{lema: Critical_Components}
Let $f: \C^n \to \C^n$ be a continuous map, $p:\C^n \to \C^n$ a proper complex polynomial map, $h=|p|^2,B\subset \C^n$ a closed ball and assume that $|f(z)-p(z)|<\delta /2$ for all $z\in B.$ Let $\Omega$ be a connected component of $f^{-1}(B_\delta)\cap B$ which contains a zero of $f.$ Then $\Omega$ contains either a zero of $p$ or a connected component of $\Crit(h|_{\partial B}).$
\end{lemma}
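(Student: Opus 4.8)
The plan is to argue by contradiction. Suppose $\Omega$ is a connected component of $f^{-1}(B_\delta) \cap B$ containing a zero $z_0$ of $f$, but $\Omega$ contains neither a zero of $p$ nor any connected component of $\Crit(h|_{\partial B})$. First I would record the elementary consequences of the $C^0$-closeness hypothesis: since $|f - p| < \delta/2$ on $B$, on $\Omega \subset f^{-1}(B_\delta)$ we get $|p| < 3\delta/2$ (using $|p(z)| \le |f(z)| + |f(z)-p(z)| < \delta + \delta/2$), so $h = |p|^2 < 9\delta^2/4$ throughout $\Omega$; and at the zero $z_0$ of $f$ we have $|p(z_0)| < \delta/2$, hence $h(z_0) < \delta^2/4$. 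Thus the value $h(z_0)$ is strictly below the level $\delta^2/4$, which in turn is strictly below the level $9\delta^2/4$ that bounds $h$ on all of $\Omega$.

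Next I would run a min-max / mountain-pass type argument on the compact connected set $\overline{\Omega}$, or rather on $\Omega$ relative to its boundary. The key point is to locate the part of $\partial \Omega$ lying in the interior of $B$ versus the part lying in $\partial B$. Since $\Omega$ is a connected component of the closed-in-$B$ set $f^{-1}(\overline{B_\delta}) \cap B$ (I would use the closed-ball convention so $\Omega$ is compact), its frontier inside the open ball $\interior B$ is contained in $f^{-1}(S_\delta)$, where $|f| = \delta$, and hence $|p| \ge \delta/2$, i.e. $h \ge \delta^2/4$ there. Now consider whether $\Omega$ is an island (disjoint from $\partial B$) or a peninsula. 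If $\Omega$ is an island, then $\partial\Omega \subset f^{-1}(S_\delta)$ and $p \ne 0$ on $\overline\Omega$, so $\log|p|$ is a well-defined pluriharmonic function on a neighborhood of the compact set $\overline\Omega$; its minimum over $\overline\Omega$ would have to be attained on $\partial\Omega$ by the minimum principle, but $\log|p|(z_0) < \log(\delta/2) \le \min_{\partial\Omega} \log|p|$, a contradiction. (Alternatively: $1/p$ is holomorphic near $\overline\Omega$ and $|1/p(z_0)| > 2/\delta \ge \max_{\partial\Omega}|1/p|$, contradicting the maximum principle.) So an island always contains a zero of $p$, and we are done in that case.

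The remaining, and genuinely harder, case is when $\Omega$ is a peninsula, i.e. $\Omega \cap \partial B \ne \emptyset$. Here the maximum-principle argument fails because part of $\partial\Omega$ lies on $\partial B$ where we have no control on $|p|$ from below. This is where $\Crit(h|_{\partial B})$ enters. The idea is: restrict attention to $h|_{\partial B}$, and use that $h \le h(z_0) + \text{(something)}$ forces a min-max critical value of $h|_{\partial B}$ to be ``seen'' by $\Omega$. Concretely, let $c = h(z_0) < \delta^2/4$ and consider the sublevel set $A = \{h|_{\partial B} \le \delta^2/4\} \cap \partial B$; note $\Omega \cap \partial B$ is a nonempty subset of $A$ since $h < 9\delta^2/4$ on $\Omega$ but actually — I need the sharper bound $h < \delta^2/4$ on $\Omega \cap \partial B$; this requires being slightly more careful, e.g. choosing the constant so that $|f-p| < \delta/2$ gives on $\partial B \cap f^{-1}(B_\delta)$ the estimate, which I will need to arrange, perhaps by shrinking to $|f - p|$ small enough or noting $\Omega \cap \partial B \subset f^{-1}(\overline{B_\delta})$ gives $|p| \le 3\delta/2$ only — so I would instead compare levels $\delta^2/4$ and $9\delta^2/4$ and the argument still goes through with these two levels playing the roles of sub- and super-level sets. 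Let $\Omega'$ be the connected component of $\Omega \cap \partial B$ containing a point of $\overline\Omega \cap \partial B$ that is connected in $\partial B$; one shows $\Omega'$ lies in a connected component $W$ of the superlevel (or appropriate) region of $h|_{\partial B}$ whose topology changes between levels, and a standard deformation/mountain-pass argument on the compact manifold $\partial B$ produces a critical point of $h|_{\partial B}$ inside the closure of that region. The crux is to show this critical point — or rather an entire connected component of $\Crit(h|_{\partial B})$ — actually lies in $\Omega$. For this I would argue that the connected component of $\{h|_{\partial B} < 9\delta^2/4\}$ in $\partial B$ that meets $\Omega$ is itself contained in $\Omega$: indeed any point of $\partial B$ with $h < 9\delta^2/4$ satisfies $|p| < 3\delta/2$ hence $|f| < 2\delta$... which is not quite $< \delta$. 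So the clean statement needs the correct bookkeeping of constants; the honest version is that $f^{-1}(B_\delta) \cap \partial B$ and $\{h|_{\partial B} < (\text{const})\delta^2\}$ are nested appropriately, and connectedness of $\Omega$ forces the relevant critical component of $h|_{\partial B}$, trapped by min-max between two such nested levels, to sit inside $\Omega$.

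I expect the main obstacle to be precisely this last point: transferring a min-max critical point of $h|_{\partial B}$, produced abstractly on the sphere, into the specific component $\Omega$ of $f^{-1}(B_\delta) \cap B$, while juggling the loss of constants coming from the $\delta/2$ in the hypothesis $|f-p| < \delta/2$. The resolution should be to phrase everything in terms of $h = |p|^2$ directly (whose sublevel sets are cleanly nested between the relevant $f$-preimages because $|f-p|<\delta/2$ gives two-sided comparisons $|p|-\delta/2 \le |f| \le |p|+\delta/2$), to note that on the portion of $\partial\Omega$ interior to $B$ one has $|p| \ge \delta/2$ strictly above the value at $z_0$, and then to observe that $\Omega$, being a single connected component, cannot ``escape'' across $\partial B$ to lower values of $h|_{\partial B}$ without either hitting $p=0$ or containing a full critical component of $h|_{\partial B}$ where the sublevel-set topology of $h|_{\partial B}$ transitions. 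Making the mountain-pass step rigorous on the smooth sphere (using that $h$ is smooth, or just that $\Crit(h|_{\partial B})$ is the honest obstruction to deformation retracting one sublevel onto another) is routine once the level constants are fixed.
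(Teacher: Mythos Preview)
Your proof has genuine gaps and misses a much simpler route.

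In the island case your complex-analytic tools are misapplied. For a vector-valued holomorphic map $p:\C^n\to\C^n$ the function $\log|p|=\tfrac12\log\sum_j|p_j|^2$ is only plurisubharmonic, not pluriharmonic, so it obeys a maximum principle, not the minimum principle you invoke; and ``$1/p$ is holomorphic'' makes no sense for $p$ taking values in $\C^n$. The correct reason an interior local minimum of $|p|$ must be a zero is that a proper holomorphic map $\C^n\to\C^n$ is open (proper $\Rightarrow$ finite, and finite equidimensional $\Rightarrow$ open), so if $p(z_1)\neq 0$ the image of any neighbourhood of $z_1$ would contain points strictly closer to $0$.

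More importantly, the island/peninsula split and the mountain-pass machinery are unnecessary, and your peninsula argument never closes (as you yourself note, the constants do not match up). The paper's proof is a single stroke: let $z_1\in\Omega$ realise $\min_{\Omega}|p|$. Since $|p(z_0)|<\delta/2$ at the given zero $z_0$ of $f$, also $|p(z_1)|<\delta/2$, whence $|f(z_1)|\leq|p(z_1)|+|f(z_1)-p(z_1)|<\delta$ \emph{strictly}. Thus $z_1$ lies in the relative interior of $f^{-1}(B_\delta)\cap B$ inside $B$, so $\Omega$ contains a $B$-neighbourhood of $z_1$ and $z_1$ is a local minimum of $|p|$ on all of $B$, not merely on $\Omega$. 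Now either $z_1\in\interior B$, in which case openness of $p$ forces $p(z_1)=0$; or $z_1\in\partial B$, in which case $z_1\in\Crit(h|_{\partial B})$, and the whole critical component $Z\ni z_1$ has $|p|\equiv|p(z_1)|<\delta/2$ on it, hence $|f|<\delta$ on $Z$, hence $Z\subset\Omega$ by connectedness. The single observation you were missing --- that the strict inequality $|f(z_1)|<\delta$ puts the minimiser in the relative interior of $\Omega$ in $B$ --- is precisely what eliminates all the constant-juggling and the mountain-pass step.
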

\begin{proof}
First we prove that $\Omega$ contains a local minimum of $h.$ Let $z_0$ be a zero of $f$ in $\Omega.$ Then $ |p(z_0)| \leq |f(z_0)| + |p(z_0)-f(z_0)| < \delta/2 $. It follows that $ \min_\Omega |p| < \delta/2 $, 
and let $ z_1 \in \Omega $ be a point where this minumum is achieved. Then $ |f(z_1)| \leq |p(z_1)|+|f(z_1)-p(z_1)| < \delta/2 + \delta/2 = \delta $, hence $ \Omega $ contains a neighborhood of $ z_1 $ in $ B_r $, and therefore $ z_1 $ is in fact a local minimum of the function $ |p| $, and hence also of the function $h$, on $ B_r.$

Now, if $z_1\in \partial B$ then $z_1 \in \Crit(h|_{\partial B})$ and let $Z$ be a connected component of $\Crit(h|_{\partial B})$ which contains $z_1.$ Since $h$ is constant on $Z$ we have that for every $z\in Z$ it holds $|f(z)|\leq |p(z)|+|f(z)-p(z)|<\delta/2+\delta/2=\delta.$ Thus, $z\in f^{-1}(B_\delta)\cap B$ and therefore $Z\subset f^{-1}(B_\delta)\cap B.$ We claim that in fact $Z\subset \Omega.$ Indeed, $Z\cap \Omega \neq \emptyset$ and since both $Z$ and $\Omega$ are connected, so is $Z\cup \Omega.$ On the other hand, $Z\cup \Omega \subset f^{-1}(B_\delta)\cap B$ and since $\Omega$ is a connected component of $f^{-1}(B_\delta)\cap B$, $Z\subset \Omega$ as claimed.

In case $z_1 \in B \setminus \partial B,$ we have that $p(z_1)=0$ because $p$ is an open mapping.  Indeed, since $p$ is proper, it is a finite mapping, and as it is equidimensional it is therefore an open mapping \cite[Proposition 3, Section 2.1.3]{DAngelo}. 
\end{proof}

\begin{proof}[Proof of Theorem \ref{thm: analytic bezout}]

By Proposition \ref{lma: Cauchy harmonic}, we can approximate $ f $ by a Taylor polynomial mapping $ p $ at $ 0 $ of degree $ <k $ such that $ |f-p| \leq C_{a} a^{-k} \mu(f,ar) $ on $ B_r $. Here we choose $ k $ to be the minimal positive integer such that $ C_a a^{-k} \mu(f,ar) < \delta/2 $. We can slightly perturb $ p $ to make it proper by adding a homogeneous polynomial of degree $k$, while $ |f-p|<\delta/2 $ continues to hold on $ B_r.$ Indeed, $p$ being proper is equivalent to $|p(z)|\to \infty$ as $|z|\to \infty$, which can be achieved by such a perturbation. Lemma \ref{lema: Critical_Components} implies that $\zeta(f,r,\delta)$ is bounded from above by the total number of zeros of $p$ and connected components of $\Crit(h|_{\partial B}).$ Now, Proposition \ref{prop: complex poly} gives us that $\zeta(f,r,\delta)\leq k^n+5k(10k)^{2n-2}$ which proves the claim.
\end{proof}

\begin{rmk}\label{rmk: Dimension_Reduction}
Theorem \ref{thm: analytic bezout} also holds for analytic maps $f:\C^n \to \C^m$ with $m<n.$ Indeed, if $m<n$ we can include $\iota:\C^m\to \C^n$ and define $\tilde{f}=\iota \circ f.$ Now $\zeta(f,r,\delta)=\zeta(\tilde{f},r,\delta)$,  $\mu(f,ar)=\mu(\tilde{f},ar)$ and (\ref{eq:mainbezout}) for $f$ follows from the same inequality for $\tilde{f}.$
\end{rmk}


\subsection{Higher-dimensional counts}\label{sec: higher}

A similar approach leads to a proof of the following more general statement, albeit with a slightly weaker exponent on the right hand side. See also Section \ref{sec: persistence}. Consider the invariants $\zeta_d(f,r,\delta)$ from Section \ref{sec: discussion zeta m}.

\begin{theorem}\label{thm: analytic bezout higher homol}
	For any analytic map $f: \C^n \to \C^m$, $m\leq n$ and any  $a>1$, $r>0$,  an integer $ 0 \leq d \leq 2n -1$, and   $\delta \in (0, \frac{\mu(f, ar)}{2})$,  we have
\begin{equation}
\label{eq:mainbezout2}
\zeta_d(f,r,\delta) \leq C \left(\log\left(\frac{\mu(f, ar)}{\delta}\right)\right)^{2n},
\end{equation}
	where the constant $C$ depends only on $a$ and $n$. 	
\end{theorem}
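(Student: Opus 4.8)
The plan is to run the same scheme as in the proof of Theorem \ref{thm: analytic bezout}, replacing the zero-counting step by a Morse-theoretic count of critical points, and being slightly more generous with the exponents. The starting point is the approximation lemma: by Proposition \ref{lma: Cauchy harmonic}, for the minimal integer $k$ with $C_a a^{-k}\mu(f,ar) < \delta/2$ we obtain a Taylor polynomial map $p:\C^n\to\C^m$ of degree $<k$ with $|f-p| < \delta/2$ on $B_r$; perturbing by a small homogeneous degree-$k$ term we may assume that, after composing with the inclusion $\iota\colon\C^m\hookrightarrow\C^n$ as in Remark \ref{rmk: Dimension_Reduction}, the resulting map $\C^n\to\C^n$ is proper, without changing $\zeta_d$ or affecting the $C^0$-bound. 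Note $k = O(\log(\mu(f,ar)/\delta))$, so the whole game is to bound $\zeta_d$ by a polynomial in $k$ of degree $2n$.

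Next I would set up the homological comparison. Write $h = |p|^2$, let $A = \{h=0\}\cap B_r$ (a subset of $\{f=0\}\cap B_r$ up to the $\delta/2$ fuzz, handled exactly as in Lemma \ref{lema: Critical_Components}) and $X = \{|f|\le\delta\}\cap B_r$. The key point is that the inclusion-induced map $H_d(\{f=0\}\cap B_r)\to H_d(X)$ factors, up to the standard $\delta/2$-interleaving argument from \cite{BPPPSS}, through $H_d(\{h\le \eta\}\cap B_r)$ for a suitable sublevel $\eta$, and this in turn is controlled by the topology of the pair $(\{h\le\eta\}\cap B_r, \partial(\cdots))$. So it suffices to bound the total Betti number of sublevel sets $\{h\le\eta\}\cap B_r$ and of their intersections with $S_r = \partial B_r$, uniformly in $\eta$. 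For the sublevel sets of $h$ inside the open ball this is a semialgebraic/Milnor-type estimate: $h$ is a real polynomial of degree $\le 2k$ in $2n$ real variables, and the sum of Betti numbers of $\{h\le\eta\}\cap B_r$ is $O(k^{2n})$ by \cite[Theorem 2]{Milnor} (or the Oleinik--Petrovsky--Thom--Milnor bounds for sublevel sets). For the boundary contribution one repeats the inverse-stereographic-projection trick from the proof of Proposition \ref{prop: complex poly}: on $S_r\setminus\{N\}$ the function $h$ pulls back to $q/(|u|^2+1)^{2k}$ with $\deg q\le 4k$, and the sublevel sets $\{h\le\eta\}$ there are described by one polynomial inequality of degree $O(k)$ in $2n-1$ variables, again giving an $O(k^{2n})$ Betti bound; the single removed point $N$ changes Betti numbers by a bounded amount.

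Assembling these via a Mayer--Vietoris or long-exact-sequence argument (decomposing $\{h\le\eta\}\cap B_r$ into its interior part and a collar of $\{h\le\eta\}\cap S_r$) bounds $\dim H_d$ of the relevant sublevel set by $C k^{2n}$, hence $\zeta_d(f,r,\delta)\le C k^{2n} \le C(\log(\mu(f,ar)/\delta))^{2n}$, with $C$ depending only on $a$ and $n$. The main obstacle, and the reason the exponent is $2n$ rather than $2n-1$, is that we no longer count just connected components (for which one could hope to exploit that $0$-cycles are cheaper) but arbitrary-degree homology, and we must control the topology of a \emph{pair} involving the sphere $S_r$; the honest bookkeeping of how the interior Betti numbers, the boundary Betti numbers, and the interleaving maps fit together in a single exact sequence is where care is needed. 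A secondary technical point is ensuring the $\delta/2$-interleaving / persistence argument of \cite{BPPPSS} applies verbatim to $h=|p|^2$ rather than to $|f|$ itself, i.e.\ that passing from $f$ to its polynomial approximation only shifts filtration levels by a controlled amount; this is routine but must be stated cleanly.
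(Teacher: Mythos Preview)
Your overall strategy is correct and matches the paper's: approximate $f$ by a polynomial $p$ of degree $<k$ with $|f-p|<\delta/2$ on $B_r$, factor the defining map for $\zeta_d$ through the homology of a sublevel set of $|p|$, and bound the latter by Milnor-type estimates. However, you are making two steps substantially harder than they need to be.

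First, the factoring requires no interleaving or persistence machinery. Since $|f-p|<\delta/2$ on $B_r$, one has the elementary chain of inclusions
\[
\{f=0\}\cap B_r \;\subset\; \{|p|\le\delta/2\}\cap B_r \;\subset\; \{|f|\le\delta\}\cap B_r,
\]
and hence the map $H_d(\{f=0\}\cap B_r)\to H_d(\{|f|\le\delta\}\cap B_r)$ factors through $H_d(\{|p|\le\delta/2\}\cap B_r)$ on the nose. This gives immediately
\[
\zeta_d(f,r,\delta)\le \dim H_d\big(\{|p|^2\le\delta^2/4\}\cap B_r\big),
\]
with no need for Lemma \ref{lema: Critical_Components}, no ``suitable $\eta$'', and no uniformity over sublevels.

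Second, and this is where you lose the most effort, the set $\{|p|^2\le\delta^2/4\}\cap B_r\subset\R^{2n}$ is a basic closed semialgebraic set cut out by \emph{two} polynomial inequalities, $\delta^2/4-|p|^2\ge 0$ and $r^2-\sum x_i^2\ge 0$, of degrees $\le 2k$ and $2$. Milnor's bound for systems of inequalities (\cite[Theorem 3]{Milnor}) applies directly and yields a total Betti number bound of order $k^{2n}$. There is no need to split into interior and boundary pieces, no stereographic projection, no Mayer--Vietoris, and no Morse theory. Your separate treatment of $S_r$ and the ensuing exact-sequence bookkeeping are simply reproducing by hand what \cite[Theorem 3]{Milnor} already packages; the paper invokes it in one line.
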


\begin{proof}
Firstly, we notice that it is enough to prove the theorem in the case $m=n$ by the same reasoning as in Remark \ref{rmk: Dimension_Reduction}. 

By Proposition \ref{lma: Cauchy harmonic}, we can approximate $ f $ by a Taylor polynomial mapping $ p $ at $ 0 $ of degree $ <k $ such that $ |f-p| \leq C_{a} a^{-k} \mu(f,ar) $ on $ B_r $. Here we choose $ k $ to be the minimal positive integer such that $ C_a a^{-k} \mu(f,ar) < \delta/2 $. Since $ ||f|-|p|| \leq |f-p| < \delta/2 $ there exist natural maps
$$ H_d(\{f=0\} \cap B_r) \xrightarrow{i_1} H_d(\{|p| \leq \delta/2\} \cap B_r) \xrightarrow{i_2} H_d(\{|f|\leq \delta \} \cap B_r).$$
It follows that
\begin{equation*}
\begin{gathered}
\zeta_d(f,r,\delta) = \dim Im ( i_2 \circ i_1) \leq \dim H_d(\{|p|\leq \delta/2\} \cap B_r) \\
=  \dim H_d(\{ |p|^2 \leq \delta^2/4\} \cap B_r).
\end{gathered}
\end{equation*}
Identifying $\C^n=\R^{2n}$ we have that $|p|^2$ is a real polynomial of degree $<2k$ and $\{ |p|^2 \leq \delta^2/4\}\cap B_r$ is defined by two polynomial inequalities 
$$\frac{\delta^2}{4}-|p|^2\geq 0 \text{ and } r-x_1^2-\ldots -x_{2n}^2\geq 0.$$
By \cite[Theorem 3]{Milnor},  we have that $\dim H_d(\{ |p|^2 \leq \delta^2/4\} \cap B_r)$ is bounded by $\frac{1}{2}(4k+2k)(3+2k)^{2n-1} $ which finishes the proof.
\end{proof}

\begin{rmk}\label{rmk: Semialgebraic_triangulation}
In the proofs of Theorem \ref{thm: analytic bezout higher homol} and Proposition \ref{prop: complex poly} we used results of Milnor which relate to Betti numbers defined using  \v{C}ech cohomology. These Betti numbers coincide with ones coming from singular homology,  as explained in \cite{Milnor}, due to the fact that semialgebraic sets admit triangulations.
\end{rmk}

\subsection{Zeros and islands}\label{sec: zeros and islands}

In this section we prove Theorem \ref{thm: islands}, which is a combination of Corollary \ref{cor:islands 3} and Corollary \ref{cor:islands 2}.
It is a structure theorem for islands of analytic mappings. We start with the following general result which is possibly known, but we could not locate it in the literature.
%

\begin{prop}\label{prop:islands}
Let $g: U \to \C^m,$ for an open set $U \subset \C^n,$ for $n \geq m \geq 1,$ be a holomorphic mapping. Then every point $q\in U$ of non-degenerate minimum of $h=|g|^2$ must be a zero of $g.$ 
 \end{prop}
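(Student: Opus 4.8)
The plan is to argue by contradiction: suppose $q$ is a point of non-degenerate minimum of $h=|g|^2$ but $g(q)\neq 0$. Since the minimum is non-degenerate, in particular it is isolated and $h(q)>0$, so $g(q)=:w_0\neq 0$. The key observation is that $h=|g|^2$ is plurisubharmonic wherever $g$ is holomorphic, but near a point where $g\neq 0$ one can say much more. First I would exploit holomorphicity directly: after a unitary change of coordinates on $\C^m$ I may assume $w_0 = (|w_0|,0,\ldots,0)$, and then near $q$ the first component $g_1$ is holomorphic and non-vanishing, so $\log g_1$ has a local holomorphic branch. Writing $u = \mathrm{Re}\,\log g_1 = \log|g_1|$, the function $u$ is pluriharmonic near $q$, hence $|g_1|^2 = e^{2u}$ is a convex increasing function of the harmonic function $u$; in particular $|g_1|^2$ is plurisubharmonic with no interior strict local minimum unless $u$ is locally constant along every complex line, which forces $g_1$ locally constant.

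The cleaner route, which I expect to be the one the authors take, is to restrict to complex lines through $q$. Fix any $v\in\C^n$, $v\neq 0$, and consider the holomorphic map of one complex variable $\gamma(\zeta) = g(q+\zeta v)$ for $\zeta$ in a small disc. Then $h(q+\zeta v) = |\gamma(\zeta)|^2$, and since $q$ is a local minimum of $h$, the origin is a local minimum of $\zeta\mapsto|\gamma(\zeta)|^2$, equivalently of $|\gamma(\zeta)|$. But $|\gamma|$ is the modulus of a holomorphic (vector-valued) function on a disc: the minimum modulus principle (applied to any scalar component that is non-vanishing at $0$, say $\gamma_1$ after the rotation above, or directly to $\langle \gamma(\zeta), \overline{w_0}\rangle$, which is scalar holomorphic, non-zero at $0$, and whose modulus is $\le |w_0|\,|\gamma(\zeta)|$ with equality at $0$) shows $|\gamma_1|$ cannot attain an interior local minimum at $0$ unless $\gamma_1$ is constant near $0$. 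Hence $\langle g(q+\zeta v),\overline{w_0}\rangle$ is constant in $\zeta$ for every direction $v$; since this holds for all $v$ it follows (e.g. by expanding in a power series at $q$, or by the identity principle) that $z\mapsto \langle g(z),\overline{w_0}\rangle$ is constant equal to $|w_0|^2$ on a neighborhood of $q$. Then $\mathrm{Re}\,h(z) = |g(z)|^2 \ge |\langle g(z),\overline{w_0}\rangle|^2/|w_0|^2 = |w_0|^2 = h(q)$ by Cauchy--Schwarz, with equality exactly when $g(z)$ is a scalar multiple of $w_0$; combined with $\langle g(z),\overline{w_0}\rangle \equiv |w_0|^2$ this would force $g\equiv w_0$ on a neighborhood of $q$, so the Hessian of $h$ at $q$ vanishes, contradicting non-degeneracy.

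The main obstacle is making the passage from "constant along every complex line through $q$" to "$g$ is genuinely constant near $q$, so the Hessian degenerates" fully rigorous and efficient, rather than hand-waving with the identity principle; the honest way is to observe that $\psi(z):=\langle g(z),\overline{w_0}\rangle$ is a scalar holomorphic function whose modulus $|\psi|$ has a local \emph{maximum} at $q$ (since $|\psi(z)|\le |w_0|\,|g(z)|$ everywhere near $q$ while near $q$... — wait, this inequality goes the wrong way for a maximum). More carefully: from $|\gamma_1|$ constant along each line we directly get $\psi$ constant along each line, and a scalar holomorphic function constant on every complex line through a point is constant on a neighborhood by differentiating the power series term by term — each homogeneous piece vanishes because it vanishes on all lines. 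Once $\psi\equiv|w_0|^2$ near $q$, Cauchy--Schwarz plus the equality case pins down $g$, and the non-degeneracy of the Hessian of $h=|g|^2$ is violated. I would also note that the case $m=1$ is just the minimum modulus principle, and the general case reduces to it along the distinguished direction picked out by $w_0$, so the proof is short once the reduction is set up.
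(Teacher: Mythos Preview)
Your argument has a genuine gap at the step where you invoke the minimum modulus principle. You restrict to a complex line, obtain $\gamma(\zeta)=g(q+\zeta v)$, and then assert that the scalar function $\gamma_1$ (equivalently $\psi(\zeta)=\langle\gamma(\zeta),w_0\rangle$) has an interior local minimum of its modulus at $\zeta=0$. But all you know is that $|\gamma|^2=\sum_j|\gamma_j|^2$ has a local minimum at $0$; this does \emph{not} force $|\gamma_1|$ to have one. Cauchy--Schwarz gives $|\psi(\zeta)|\le|w_0|\,|\gamma(\zeta)|$ with equality at $0$, but this inequality points the wrong way: combined with $|\gamma(\zeta)|\ge|w_0|$ it yields no lower bound on $|\psi(\zeta)|$.

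In fact the conclusion you aim for is too strong. Take $n=m=2$ and $g(z_1,z_2)=(1+z_1^2,\,2z_1)$. Then $g(0)=(1,0)\neq 0$ and $h=|1+z_1^2|^2+4|z_1|^2=1+r^2(4+2\cos 2\theta)+r^4\ge 1$, so $0$ is a (degenerate, as it must be) local minimum. Restricting to the line $v=(1,0)$ gives $\gamma(\zeta)=(1+\zeta^2,2\zeta)$, and $\psi(\zeta)=\langle\gamma(\zeta),w_0\rangle=1+\zeta^2$ is certainly not constant; indeed $|\gamma_1(i\epsilon)|=1-\epsilon^2<1$, so $|\gamma_1|$ has no local minimum at $0$. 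Thus your chain ``$|\gamma_1|$ has a local min $\Rightarrow$ $\gamma_1$ constant on each line $\Rightarrow$ $\psi$ locally constant $\Rightarrow$ $g\equiv w_0$'' breaks at the very first link, and the endpoint ``$g$ locally constant'' is simply false in general. Your argument never uses the hypothesis $n\ge m$, which is another warning sign: the statement fails for $n<m$ (e.g.\ $g(z)=(1,z)$ on $\C$).

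The paper's proof is quite different and directly targets the Hessian. One shows that at a critical point $q$ with $g(q)\neq 0$ the image of $d_qg$ is orthogonal to $g(q)$, so (here $n\ge m$ is used) $K=\ker d_qg$ is a non-trivial complex subspace. On $K$ the second-order Taylor expansion gives $d_q^2h(a,b)=2\,\mathrm{Re}\langle d_q^2g(a,b),g(q)\rangle$, the real part of a complex bilinear form; such a real quadratic form has signature zero, hence cannot be positive definite. This contradicts non-degeneracy.
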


\begin{proof}
Write $g = u + iv,$ where $u, v$ are the real and imaginary part of $g$ respectively and $i$ is the imaginary unit. Then $h = |u|^2 + |v|^2.$ Now $d_q h = 2\langle u, d_q u \rangle + 2 \langle v, d_q v \rangle,$ while $d_q g = d_q u + i d_q v.$  This shows that if $q$ is a critical point of $h,$ then $g(q) \in \C^m$ is orthogonal to the image of $d_q g.$ Therefore, as $n \geq m,$ if $g(q) \neq 0,$ then the kernel $K = \ker d_q g$ is a non-trivial complex-linear subspace of $\C^n.$ Now, as $d_q g$ vanishes on $K,$ writing the second order Taylor approximation \[g(z) = g(q) + \frac{1}{2} d^2_q g(z-q, z-q) + o(|z-q|^2)\] of $g$ at $q,$ where $d^2_q g$ is the complex Hessian (or quadratic differential) of $g$ at $q,$ we obtain that \[h(z) = h(q) + \mathrm{Re}\langle d^2_q g(z-q,z-q), g(q)\rangle + o(|z-q|^2)\] is the second order Taylor approximation of $h$ at $q,$ where the brackets denote the Hermitian inner product on $\C^m.$ Hence the Hessian $d^2_q h$ is given on $a,b \in K$ by \[d^2_q h(a,b) = 2\, \mathrm{Re}\langle d^2_q g(a,b), g(q)\rangle.\] Since $d^2_q g$ is a $\C^m$-valued complex bilinear form, $\langle d^2_q g, g(q)\rangle$ is a $\C$-valued complex bilinear form. Therefore, by a classical observation (see \cite[Assertion 1, p.39]{Milnor-book}), the quadratic form of $d^2_q h|_K$ has zero signature as the real part of a complex quadratic form. Therefore it cannot be positive-definite. This contradicts the hypothesis that $q$ is an interior non-degenerate minimum. Hence $g(q) = 0.$\end{proof}



\begin{cor}\label{cor:islands 2}
Let $g: B_r \to \C^m$ be a holomorphic mapping on a ball $B_r$ in $\C^n,$ $n \geq m.$ Consider an island of $\{ |g| \leq \delta \}$ with non-empty interior $V.$ Then $V$ contains a zero of $g.$
\end{cor}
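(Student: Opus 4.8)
The plan is to argue by contradiction: suppose $g$ has no zero on the island $\Omega$ and derive a contradiction. Once that is done, any zero $z_0$ of $g$ in $\Omega$ automatically lies in $V$, since then $|g(z_0)|=0<\delta$ and a small ball about $z_0$ lies in $\{|g|<\delta\}\cap B_r$, is connected, and meets $\Omega$, hence is contained in $\Omega$.

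First I would pin down the geometry. By definition an island is a connected component of the compact set $\{|g|\le\delta\}\cap B_r$ disjoint from $S_r$, so $\Omega$ is compact and contained in the open ball $\interior B_r$; in particular $g$ is holomorphic on a neighbourhood of $\Omega$, and $g$ is non-constant (a constant map produces no islands). Being compact, $\Omega$ carries a point $q$ at which $|g|^2$ attains its minimum over $\Omega$; put $c_0=|g(q)|>0$. I claim $c_0<\delta$ and $q\in V$. Indeed, if $|g(q)|=\delta$ then $|g|\equiv\delta$ on $\Omega$, so $|g|^2$ is constant on the non-empty open set $V$; but the Levi form of $|g|^2$ has entries $\partial^2_{z_i\bar z_j}|g|^2=\sum_k(\partial_{z_i}g_k)\overline{(\partial_{z_j}g_k)}$, so its trace $\sum_{i,k}|\partial_{z_i}g_k|^2$ vanishes on $V$, forcing $\partial g\equiv 0$ on $V$ and hence, by the identity theorem on the connected set $\interior B_r$, $g$ constant — a contradiction. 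Thus $c_0<\delta$, and then a small ball about $q$ sits inside $\{|g|<\delta\}\cap B_r$, is connected, and meets $\Omega$, so $q\in V$.

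The core is a dichotomy on the analytic set $F=g^{-1}(v)$, where $v=g(q)$ and $q\in F$. If $\dim_q F\ge 1$: let $F_0$ be the connected component of $F\cap\interior B_r$ through $q$; since $|g|\equiv c_0<\delta$ on $F_0$ and $F_0$ meets $\Omega$ at $q$, connectedness gives $F_0\subseteq\Omega$, and then $|g|<\delta$ on $F_0$ forces $F_0\subseteq V$. Hence $F_0$ is a compact connected analytic subset of $\C^n$ of positive dimension (it is closed in $\interior B_r$, being a component of an analytic set, and contained in the compact set $\Omega$), which is impossible by the maximum principle — coordinate functions would be nonconstant holomorphic functions on $F_0$ (pass to the normalisation of an irreducible component if needed). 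Therefore $\dim_q F=0$; but fibres of a holomorphic map $\C^n\to\C^m$ have dimension $\ge n-m$, so in fact $n=m$ and $q$ is an \emph{isolated} point of $g^{-1}(v)$. In that case $g$ is a finite holomorphic map near $q$, hence open (exactly as in Lemma \ref{lema: Critical_Components}); choosing $\rho$ with $\overline{B(q,\rho)}\subseteq V$ and $\overline{B(q,\rho)}\cap g^{-1}(v)=\{q\}$, the image $g(B(q,\rho))$ contains a neighbourhood of $v$ in $\C^n$, in particular points of modulus $<|v|=c_0$. This contradicts $\min_\Omega|g|=c_0$, since $B(q,\rho)\subseteq V\subseteq\Omega$. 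Either way we reach a contradiction, which completes the proof.

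The step I expect to be the crux — and the reason Proposition \ref{prop:islands} by itself is not enough — is that the minimum point $q$ need not be a \emph{non-degenerate} minimum of $|g|^2$: for instance $g(z_1,z_2)=(1+\tfrac14 z_1^2,\,z_1)$ has an interior local minimum of $|g|^2$ at the origin with $g\ne 0$ there, so one cannot merely invoke a minimum principle. The island hypothesis is precisely what rescues the situation: it makes the fibre component $F_0$ relatively compact in the open ball, which a positive-dimensional analytic set can never be, while in the equidimensional case it is the compactness of $\Omega$ that clashes with the open mapping property at an isolated fibre point. The points requiring care are the standard facts invoked — that connected components of analytic sets are closed (local connectedness of analytic sets), the lower bound $n-m$ on fibre dimensions, and the local finite$\Rightarrow$open property for equidimensional holomorphic maps — none of which is hard, but each should be cited precisely.
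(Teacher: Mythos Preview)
Your proof is correct and takes a genuinely different route from the paper's. The paper proceeds by perturbation: it invokes Proposition~\ref{clm: transv} to approximate $g$ by a nearby holomorphic $g'$ with $|g'|^2$ Morse, so that Proposition~\ref{prop:islands} forces the interior minimum of $|g'|$ to be a zero of $g'$, and then lets the approximation collapse to a contradiction. You instead stay with $g$ and split on the local fibre dimension at the minimum $q$: if $\dim_q g^{-1}(g(q))\ge 1$, the fibre component through $q$ is trapped inside the island and becomes a compact positive-dimensional analytic subset of $\C^n$, impossible by the maximum principle; if the fibre is zero-dimensional (which forces $n=m$), the open mapping property for finite equidimensional holomorphic maps---already used in the paper in Lemma~\ref{lema: Critical_Components}---yields points in $V$ with $|g|<c_0$. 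Your argument is more elementary in that it completely bypasses the transversality Proposition~\ref{clm: transv}, at the cost of quoting three standard facts from several complex variables (local connectedness of analytic sets, the $n-m$ lower bound on fibre dimension, and finite $\Rightarrow$ open for equidimensional maps). The paper's approach, by contrast, packages the geometric content into the single linear-algebra computation of Proposition~\ref{prop:islands}, which is clean but only applies at non-degenerate minima---hence the need for the Morse perturbation.
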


\begin{proof}
Denote by $K$ the island of $\{ |g|\leq \delta \}$ with interior $V.$ Firstly, we claim that $|g|$ is not constant on $K.$ To this end, let $q\in V$, $L$ a complex line passing through $q$ parametrized by $z\in \C$ and denote by $u(z)=\sum_{j=1}^n |g_j(z)|^2$ the restriction of $|g|^2$ to $L.$ If $|g|$ was constant on $K$, $u$ would be constant on a neighbourhood of $q$ in $L$ and we would have that
$$0=\Delta u = 4 \sum_{j=1}^n \frac{\partial g_j}{\partial z} \frac{\partial \bar{g}_j }{\partial \bar{z}}= 4 \sum_{j=1}^n \Big{|} \frac{\partial g_j}{\partial z} \Big{|}^2.$$
This implies that each $g_j$ is constant on a neighbourhood of $q$  and thus constant on the whole $L\cap B_r.$ Since $L$ was taken arbitrarily, we conclude that $g$ is constant, which is impossible since $\{ |g| \leq \delta \}$ has an island by the assumption.

Now, assume by contradiction that $0<\min_K |g|< \max_K |g|=\delta.$ Let $0<2\varepsilon < \min\{ \min_K |g| , \max_K |g| - \min_K |g| \}$ and assume that $g':K\to \C^m$ satisfies $|g-g'|<\varepsilon$ on $K.$ Let $q,q' \in K$ for which $|g(q)|=\min_K |g|$, $|g'(q')|=\min_K |g'|.$ Since 
$$|g(q')|<|g'(q')|+\varepsilon\leq |g'(q)| +\varepsilon <|g(q)| + 2 \varepsilon = \min_K |g| +2\varepsilon < \max_K |g|,$$
we have that $q' \in V$ because $|g|$ equals $\delta$ on $K\setminus V.$ Thus $|g'|$ has an interior local minimum $q'$. By means of a transversality argument we show the following  proposition at the end of this subsection.
\begin{prop}\label{clm: transv} Assume $K$ contains no zeros of $g.$ Then, there exists an open set $U$, $K\subset U$ such that for every $\varepsilon>0$ there exists a holomorphic function $g':U \to \C^m$ such that $|g-g'|<\varepsilon$ on $U$ and $|g'|^2$ is Morse. 
\end{prop}
For $g'$ given by Proposition \ref{clm: transv}, $q'$ must be a zero by Proposition \ref{prop:islands}. However, since $2\varepsilon< \min_K |g|$ we have that $0=|g'(q')|> |g(q')|-\varepsilon>\frac{1}{2} \min_K |g|>0$, which is a contradiction. 
\end{proof}

\begin{cor}\label{cor:islands 3}
Let $g: B_r \to \C^m$ be a holomorphic mapping on a ball $B_r$ in $\C^n,$ $n \geq m.$ Then every island of $\{ |g| \leq \delta \}$ has non-empty interior. 
\end{cor}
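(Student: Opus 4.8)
The plan is to argue by contradiction: suppose some island $K$ of $\{|g| \leq \delta\}$ has empty interior. An island is, by definition, a connected component of $\{|g| \leq \delta\} \cap B_r$ that is disjoint from the boundary sphere $S_r$, hence $K$ is a compact connected subset of the open ball. The key observation I would exploit is that $|g|^2$ is real-analytic (indeed plurisubharmonic), so its sublevel sets cannot be ``too thin'' unless $g$ is highly degenerate along $K$. More precisely, I would first dispose of the case where $|g|$ is constant on $K$: if $|g| \equiv c$ on $K$ with $c \leq \delta$, then as in the proof of Corollary \ref{cor:islands 2}, restricting $|g|^2$ to any complex line $L$ through a point $q \in K$ and using $\Delta(|g|^2|_L) = 4\sum_j |\partial g_j/\partial z|^2 \geq 0$ together with the fact that a plurisubharmonic function constant on an open subset of a line is constant, one forces $g$ to be locally constant, hence constant on $B_r$; but then $\{|g| \leq \delta\}$ is either empty or all of $B_r$, neither of which has an island. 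So $|g|$ is non-constant on $K$, and hence $\max_K |g| = \delta$ while $\min_K |g| < \delta$ (the max must be $\delta$, else $K$ would not be a full connected component since a slightly larger sublevel set would still be near $K$).

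Next I would like to produce an interior local minimum of $|g|$ (or of a small holomorphic perturbation) inside the would-be empty interior, deriving a contradiction. The cleanest route: pick $q_0 \in K$ with $|g(q_0)| = \min_K |g| =: 2m_0 > 0$ (if $\min_K |g| = 0$ we are already done, since then $K$ contains a zero in its interior by openness of holomorphic maps — wait, but then $K$ would automatically have nonempty interior, so this sub-case cannot arise under our assumption). The set $W = \{z \in B_r : |g(z)| < (m_0 + \delta)/2\}$ is open, and $q_0 \in \overline{K} \cap W$; let $V_0$ be the connected component of $W$ containing $q_0$. Since $\max_K |g| = \delta$, the island $K$ is separated from $S_r$ by a positive distance and $V_0$ is relatively compact in $B_r$. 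On $\overline{V_0}$ the function $|g|$ attains its minimum at some interior point $q_1$ (it is $\leq |g(q_0)| = 2m_0 < (m_0+\delta)/2$ there, while on $\partial V_0$ it equals $(m_0+\delta)/2 > 2m_0$), so $q_1$ is an interior local minimum of $|g|^2$ with positive value. Now apply the transversality/perturbation argument of Proposition \ref{clm: transv} (valid since $|g| > 0$ near $q_1$, so $g$ has no zeros on a neighborhood of the compact set $\overline{V_0}$): there is a holomorphic $g'$ arbitrarily $C^0$-close to $g$ on a neighborhood of $\overline{V_0}$ with $|g'|^2$ Morse, and $g'$ still has a strict interior local minimum $q_1'$ near $q_1$ with $|g'(q_1')| > 0$. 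This contradicts Proposition \ref{prop:islands}, which forces any non-degenerate interior minimum of $|g'|^2$ to be a zero of $g'$.

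The main obstacle I anticipate is the bookkeeping to guarantee that the perturbation $g'$ genuinely has a \emph{non-degenerate} interior local minimum with \emph{positive} critical value on the relevant open set — i.e., that the Morse minimum is not pushed to the boundary $\partial V_0$ nor converted into a saddle by the perturbation, and that it is not a zero of $g'$. This is handled exactly as in Corollary \ref{cor:islands 2}: choose the perturbation size $\varepsilon$ smaller than $\tfrac14 \min(\min_{\overline{V_0}}|g|,\ \mathrm{dist}(\min,\ \text{boundary value}))$ so that the minimum of $|g'|$ over $\overline{V_0}$ is still attained at an interior point with value bounded away from $0$; then Morseness of $|g'|^2$ upgrades this interior minimum to a non-degenerate one, and Proposition \ref{prop:islands} applies to yield the contradiction $0 = |g'(q_1')| > \tfrac12\min_{\overline{V_0}}|g| > 0$. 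The remaining routine point is that ``connected component disjoint from $S_r$'' indeed gives relative compactness in $B_r$, which follows because $K$ is closed in the compact set $\{|g|\leq\delta\}\cap B_r$ and at positive distance from $S_r$.
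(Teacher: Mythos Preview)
There is a genuine gap at the very start. If the island $K$ has empty interior, then in fact $|g|\equiv\delta$ on $K$: indeed, if $|g(q_0)|<\delta$ for some $q_0\in K$, continuity gives an open neighborhood $U\subset B_r$ of $q_0$ with $|g|<\delta$ on $U$; then $U\subset\{|g|\le\delta\}\cap B_r$, and since $K$ is the connected component through $q_0$, we get $U\subset K$, so $K$ has nonempty interior. You notice this phenomenon yourself in the sub-case $\min_K|g|=0$, but the same neighborhood argument works for any value strictly below $\delta$. Consequently the situation ``$\max_K|g|=\delta$ while $\min_K|g|<\delta$'' that you aim for simply cannot occur under the hypothesis, and the subsequent construction of an interior local minimum never starts.

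Your attempt to rule out the constant case by the Laplacian computation from Corollary~\ref{cor:islands 2} also fails here: that argument needs $|g|^2$ to be constant on an \emph{open} subset of a complex line, which is available only when $K$ has interior. With $K$ of empty interior, $K\cap L$ need not be open in $L$, so nothing forces $g$ to be locally constant. The paper's proof avoids this trap by not working on $K$ at all: it passes to the slightly enlarged components $K_i$ of $\{|g|\le\delta+1/i\}$ containing $K$, shows these are islands (disjoint from $S_r$) for $i$ large with genuinely nonempty interior, applies Corollary~\ref{cor:islands 2} to obtain zeros $z_i\in K_i$, and then lets $z_i\to z_\infty\in\bigcap_i K_i=K$, contradicting $|g|\equiv\delta$ on $K$. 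The enlargement step is precisely the missing idea in your approach.
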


\begin{proof}
Let $K$ be an island with empty interior. Then $|g| = \delta$ on $K$ and we denote by $K_i,i\geq 1$ a connected component of $\{|g|\leq \delta + 1/i \} \cap B_r$ which contains $K.$

Firstly, we claim that $\cap_{i\geq 1} K_i =K.$ Indeed, $K \subset \cap_{i\geq 1} K_i$ by definition. On the other hand, since $\{ K_i \}$ is a nested sequence of connected compact sets, $\cap_{i\geq 1} K_i$ is also compact and connected, see \cite[Corollary 6.1.19]{Engelking89}. Since $|g|\leq \delta$ on $\cap_{i\geq 1} K_i$, we have that $\cap_{i\geq 1} K_i$ is a connected subset of $\{ |g| \leq \delta \}$ which contains $K$ and thus has to be equal to $K$ since $K$ is a connected component.

Secondly, we claim that there exists $i_0$ such that for all $i \geq i_0$, $K_i$ are disjoint from $\partial B_r.$ Indeed, if this was not the case, there would exist $x_i \in K_i \cap \partial B_r$ and by compactness of $\partial B_r$ we could assume $x_i \to x_\infty \in \partial B_r.$ This is not possible since $x_\infty \in \cap_{i\geq 1} K_i = K$ which is disjoint from $\partial B_r.$

Thus $\{K_i \}, i\geq i_0$ is a sequence of islands in $B_r$ each of which has a non-empty interior (since it contains an open neighbourhood of $K$). By Corollary \ref{cor:islands 2}, there exists a sequence $z_i\in K_i, i\geq i_0$ of zeros of $g$,  and by compactness we may assume that $z_i\to z_\infty \in K.$ This contradicts the fact that $|g|=\delta$ of $K.$ \end{proof}

We note that the condition $n \geq m$ is essential for Proposition \ref{prop:islands} and Corollaries \ref{cor:islands 2}, \ref{cor:islands 3}, as can be seen from the mapping $g:B_r \to \C^2,$ $B_r \subset \C,$ $g(z) = (1,z).$ Similarly, so is non-degeneracy in Proposition \ref{prop:islands}, as shown by the example $g: B_r \to \C^2,$ $B_r \subset \C^2,$ $g(z,w) = (z,1-z).$ 

\begin{proof}[Proof of Proposition \ref{clm: transv}]


Denote by $g_z$ the complex differential of $g$ with respect to $z=(z_1,\ldots, z_n)$.  Let $U$ be such that the closure $\overline{U} \subset B_r$ and $|g| >c > 0$, $|g_z| < C$ on $U$ for some $c,C.$ We argue as follows in the spirit of Thom's parametric transversality theorem.


The function $|g|^2= \bar{g}g$ is Morse on $U$ whenever $\bar{g} g_z$ is transverse to $0$ as a mapping $U \to \C^n.$ Indeed, in this case by the proof of Proposition \ref{prop:islands}, at every critical point of $|g|^2,$ its Hessian will be the real part of a non-degenerate complex-valued symmetric bilinear form $2\bar{g} g_{zz},$ and as such, it will be non-degenerate. 

Denote by $A_i$ the standard basis in the space $\mrm{Mat}(m \times n,\C)$ of $m \times n$ complex matrices.
Choose $\alpha >0$ small enough, and consider an $\alpha$-net $b_j$ in a compact in $\C^m$ containing $A_i \left(U \right)$ for all $i$.
Put $$v_{ij}(z) = A_iz-b_j\;.$$ We look at the perturbation
$$G(z,\epsilon) = g(z) + \sum \epsilon_{ij} v_{ij}(z)\;.$$
The derivative of $\bar{G}G_z$ over $\epsilon_{ij}$
is
\begin{equation}
\label{eq-1}
A_i\bar{g} + \overline{(A_iz-b_j)}g_z\;.
\end{equation}
Fix any point $z \in U$ and choose $j$ so that the second summand in \eqref{eq-1} is smaller than
$C\alpha$. At the same time at least one of the components of $\bar{g}(z)$
has absolute value $> c'$, where $c'$ depends only on $c$ and $n$.
Thus, the collection $\{A_i \bar{g}\}$ contains elements $c_1e_1, \dots, c_me_m$ where
$e_1, \dots, e_m$ is the standard basis of $\C^m$, and $|c_i| > c'$. As invertible matrices form an open set in $\mrm{Mat}(m \times m, \C),$ after the perturbation $\overline{(A_iz-b_j)}g_z$ of norm at most $C\alpha$ these vectors still span $\C^m$, provided $\alpha$ is small enough. It follows that $\bar{G}G_z$ is transverse to $0$. Thus by Thom's parametric transversality theorem, for almost all $\epsilon$ we have that $|G_\epsilon|^2$ is Morse. Choosing $g' = G_{\epsilon}$ for $\epsilon$ sufficiently small then finishes the proof.
\end{proof}


\section{Coarse analysis of the Cornalba-Shiffman example}\label{Section:CS_Proofs}

The goal of this section is to prove Theorem  \ref{Thm:CS_Coarse_Count}.
We will break down its proof into Propositions \ref{Estimate_of_Max}, \ref{Prop_Upper_Zeta} and \ref{Prop_Lower_Zeta}. Since there are no zeros of $F$ when $r< 2$ we always assume $r\geq 2.$ The following elementary estimate will be used repeatedly. For an integer $m\geq 1,$ \[2^{m(m+1)/2}< \prod_{i=0}^m (1+2^i) < 2^{(m+1)(m+2)/2}.\]

Firstly, we estimate $\mu(F,r)$ as needed for the first part of Theorem \ref{Thm:CS_Coarse_Count}.

\begin{prop}\label{Estimate_of_Max} For all $\mathfrak{c}$ and all $r\geq 2$ it holds
$$ \frac{1}{2} (\log r)^2 - \frac{3}{2}\log r +1 \leq \log \mu(F,r) \leq \frac{3}{2} (\log r)^2 + \frac{7}{2} \log r +C,$$
where $C=4+\log \left( \prod_{i=1}^\infty (1+2^{-i}) \right).$
\end{prop}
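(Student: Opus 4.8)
The plan is to bound $\mu(F,r)$ from above and below separately, estimating $|F(z,w)| = \max(|g(z)|, |f(z,w)|)$ on the closed ball $B_r \subset \C^2$. Since $\mu(F,r)$ is the maximum over $B_r$, it is convenient to use the fact that $\mu(g,r)$ and $\mu(f,r)$ can be controlled on the polydisc of polyradius $(r,r)$, which differs from the ball only by a bounded factor in the radius (absorbed into the additive constants). Throughout I would use the elementary estimate displayed just before the proposition, namely $2^{m(m+1)/2} < \prod_{i=0}^{m}(1+2^i) < 2^{(m+1)(m+2)/2}$, applied with $m = \lfloor \log r \rfloor$.

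For the \emph{lower bound}: the key point is that $\mu(F,r) \geq \mu(g,r) = \max_{|z|\leq r}|g(z)| = \max_{|z|\leq r}\prod_{i\geq 1}|1 - z/2^i|$. Evaluating at a real point $z = -r$ (or $z=r$), the factors with $2^i \leq r$ each contribute at least roughly $1 + 2^i/r$ times... actually more cleanly, taking $z = -2^N$ for $N = \lfloor\log r\rfloor$ gives $|g(-2^N)| = \prod_{i\geq 1}(1 + 2^{N-i}) \geq \prod_{i=0}^{N-1}(1+2^i) > 2^{(N-1)N/2}$, and since $N \geq \log r - 1$ this yields $\log\mu(F,r) \geq \log|g(-2^N)| \geq \tfrac12(\log r)^2 - \tfrac32\log r + 1$ after bookkeeping. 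This is the entire lower half, so the only care needed is choosing the evaluation point inside $B_r$ and tracking the linear and constant error terms.

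For the \emph{upper bound}: I must bound both $|g(z)|$ and $|f(z,w)|$ for $(z,w)\in B_r$. For $|g(z)|$ with $|z|\leq r$: split the product into factors with $2^i \leq 2r$ (there are about $\log r + 1$ of them, each bounded by $1 + r/2^i \leq 1 + r \leq 2^{1+\log r}$, so together at most $2^{(\log r+2)(\log r + 1)}$ roughly, i.e. $\leq 2^{(\log r)^2 + O(\log r)}$), and factors with $2^i > 2r$ (each bounded by $1 + |z|/2^i \leq 1 + 2^{-1}\cdot 2^{i_0 - i}$ with geometric tail, contributing a bounded constant $\prod_{i\geq 1}(1+2^{-i})$). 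For $|f(z,w)| = |\sum_{i\geq 1} 2^{-c_i^2} g_i(z) P_{c_i}(w)|$: bound $|g_i(z)| \leq \mu(g,r)/\min|1-z/2^i|$... more simply, $|g_i(z)| = |g(z)|\cdot|1-z/2^i|^{-1}$ is awkward near the zero, so instead bound $|g_i(z)| \leq \prod_{j\neq i}(1+r/2^j)$ directly by the same split, getting $|g_i(z)| \leq 2^{(\log r)^2 + O(\log r)}$ uniformly in $i$; and $|P_{c_i}(w)| \leq \prod_{j=1}^{c_i}(|w|+1) \leq (r+1)^{c_i} = 2^{c_i \log(r+1)}$. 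The crucial subtlety is that the factor $2^{-c_i^2}$ must beat $2^{c_i\log(r+1)}$: the series $\sum_i 2^{-c_i^2 + c_i\log(r+1)}$ converges to something bounded by a constant (since $c_i \geq i$ grows at least linearly, $-c_i^2 + c_i\log(r+1) \to -\infty$ and for the finitely many $i$ with $c_i < \log(r+1)$ each term is at most $2^{(\log(r+1))^2/4}$, contributing at most $\log(r+1)\cdot 2^{(\log r)^2/4 + O(\log r)}$). Combining, $|f(z,w)| \leq 2^{(\log r)^2 + O(\log r)} \cdot 2^{(\log r)^2/4 + O(\log r)} \cdot(\text{const})$, which is comfortably below $2^{\frac32(\log r)^2 + \frac72\log r + C}$; I would then just verify the constants $\frac32$, $\frac72$, $C$ close the bound with room to spare.

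The main obstacle — and the only place requiring genuine care rather than routine estimation — is the uniform-in-$\mathfrak{c}$ control of the series defining $f$: one must check that the worst case over all strictly increasing integer sequences $\{c_i\}$ (which could a priori include slowly growing ones like $c_i = i$, making more terms "large") still gives a bound independent of $\mathfrak{c}$, which works because $c_i \geq i$ forces $-c_i^2 + c_i\log(r+1) \leq -i^2 + i\log(r+1)$ and $\sum_i 2^{-i^2 + i\log(r+1)} = O((\log r)^{O(1)})$, absorbed into the $O(\log r)$ exponent; one must make sure no $(\log r)^2$ term with coefficient exceeding $\frac32$ sneaks in. Everything else is bookkeeping with the displayed product estimate.
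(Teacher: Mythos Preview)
Your approach is essentially the same as the paper's: lower bound by evaluating $|g|$ at a negative real point of modulus $\approx r$ and using the product inequality; upper bound by splitting the product for $|g|$ and $|g_i|$ into the factors with $2^i \lesssim r$ and a convergent tail, then controlling the $f$-series via $|P_{c_i}(w)|\le(r+1)^{c_i}$ and the fact that $c_i\ge i$ makes the bound uniform in $\mathfrak c$.

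Two small points. First, the paper takes $|F|=\sqrt{|g|^2+|f|^2}$, not $\max(|g|,|f|)$; the discrepancy is only a factor $\sqrt 2$, so it is harmless after taking $\log$. Second, the paper handles the series slightly differently: it observes that $\sum_i (r+1)^{c_i}/2^{c_i^2}$ is a subsum of $\sum_{n\ge1}(r+1)^n/2^{n^2}$ and bounds the latter by $(2r)^{\log 2r}+2$, contributing a full $(\log r)^2$ to the exponent; your max-term-times-count gives the sharper $\tfrac14(\log r)^2$ here, but your $|g|$ bound is cruder (leading coefficient $1$ rather than the paper's $\tfrac12$, obtained by bounding $1+2^{j}$ by $2^{j+1}$ rather than each factor by $1+r$). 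Both combinations sit below $\tfrac32(\log r)^2$ with room to spare, though the exact constants $\tfrac32,\tfrac72,\,C=4+\log\prod(1+2^{-i})$ in the statement come from the paper's particular split, so if you want those precise values for all $r\ge2$ (not just asymptotically) you should adopt the sharper factor-by-factor estimate for $|g|$.
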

\begin{proof}
Let $k\geq 1$ be an integer such that $2^k\leq r <2^{k+1}.$ In this case $k \leq \log r < k+1.$ 
To prove the first inequality it is enough to take $(z,w)=(-r,0).$ Now
$$|F(-r,0)|\geq |g(-r)|\geq |g(-2^k)|=\prod_{i=1}^\infty (1+2^{k-i}),$$
and
$$\prod_{i=1}^\infty (1+2^{k-i})>\prod_{i=1}^k (1+2^{k-i})=\prod_{j=0}^{k-1}(1+2^j)>2^{\frac{k(k-1)}{2}}> \left(\frac{r}{2} \right)^{\frac{k-1}{2}}.$$
Since $\log r<k+1$ we have that $\frac{k-1}{2}>\frac{\log r -2}{2}$ and hence $|F(-r,0)|\geq \left(\frac{r}{2} \right)^{\frac{\log r}{2}-1}.$ Applying logarithm to both sides proves the first inequality.

To prove the second inequality we firstly notice that if $|z|\leq r$ then
$$|g(z)|\leq \prod_{i=1}^\infty (1+2^{-i}|z|)<\prod_{i=1}^\infty (1+2^{k+1-i})=\prod_{j=0}^k (1+2^j) \cdot \prod_{i=1}^\infty (1+2^{-i}).$$
Denoting $C_1=\prod_{i=1}^\infty (1+2^{-i})$ and estimating 
$$\prod_{j=0}^k (1+2^j) \leq \prod_{j=1}^{k+1} 2^j=2^\frac{(k+1)(k+2)}{2}\leq (2r)^\frac{k+2}{2}\leq (2r)^{\frac{\log r}{2}+1},$$
yields
\begin{equation}\label{g-estimate}
|g(z)|< C_1 (2r)^{\frac{\log r}{2}+1}.
\end{equation}
Similarly,
$$|g_i(z)|< C_1 (2r)^{\frac{\log r}{2}+1},$$
for all $i\geq 1.$ Using this inequality we further estimate that if $|(z,w)|\leq r$ then
$$|f(z,w)|\leq \sum_{i=1}^\infty \frac{|g_i(z)| \cdot |P_{c_i}(w)|}{2^{c_i^2}}\leq C_1 (2r)^{\frac{\log r}{2}+1} \sum_{i=1}^\infty \frac{|P_{c_i}(w)|}{2^{c_i^2}}. $$
On the other hand
$$\sum_{i=1}^\infty \frac{|P_{c_i}(w)|}{2^{c_i^2}}=\sum_{i=1}^\infty \frac{\prod_{j=1}^{c_i}|(w-1/j)|}{2^{c_i^2}}<\sum_{i=1}^\infty \frac{(r+1)^{c_i}}{2^{c_i^2}}\leq \sum_{i=1}^\infty \frac{(r+1)^i}{2^{i^2}}.$$
To bound the last term we proceed as follows
$$\sum_{i=1}^\infty \frac{(r+1)^i}{2^{i^2}}=\sum_{1\leq i \leq \log (r+1)} \left( \frac{r+1}{2^i} \right)^i + \sum_{ i > \log (r+1)} \left( \frac{r+1}{2^i} \right)^i <$$
$$< \sum_{1\leq i \leq \log (r+1)} (r+1)^i + \sum_{j=0}^\infty \frac{1}{2^j}< (r+2)^{\log (r+1)}+2<(2r)^{\log 2r}+2.$$
Putting all the inequalities together, we obtain
\begin{equation}\label{f-estimate}
|f(z,w)|< C_1 (2r)^{\frac{\log r}{2}+1} ((2r)^{\log 2r}+2).
\end{equation}
Since $|F(z,w)|=\sqrt{|g(z)|^2+|f(z,w)|^2}$,  combining (\ref{g-estimate}) and (\ref{f-estimate}) proves the desired inequality.
\end{proof}

We will now estimate $\zeta(F,r,\delta)$ from above. Before we carry out the relevant computations, let us explain the geometric intuition behind the estimate. 

Zeros of $F$ belong to intervals $\{ 2^k \} \times [0,1]$, $k\geq 1.$ For a fixed $\delta$, we wish to prove that there exists $k_0$ such that for all $k\geq k_0$, each of the intervals $\{ 2^k \} \times [0,1]$ is fully contained in $\{ |F|\leq \delta \}.$ This is the content of Corollary \ref{Merging_zeros}. Now,  on each of these intervals all zeros belong to the same connected component of $\{|F|\leq \delta\}$ and are thus counted at most once in the coarse count $\zeta(F,r,\delta)$, see Figure \ref{Zeros_Upper}. In other words, each of the intervals $\{ 2^k \} \times [0,1]$, $k\geq k_0$ contributes at most one to $\zeta(F,r,\delta)$ and since they appear at rate $\log r$ we have that
$$\zeta(F,r,\delta)\leq \log r + \text{the error term}.$$
The error term comes from zeros on intervals $\{2^k \}\times [0,1]$ for $k<k_0$ where we can not guarantee merging of zeros in $\{ |F| \leq \delta \}$, i.e. we observe no coarse effects.  Moreover, since $k_0$ depends only on $\delta$, the error terms only depends on $\mathfrak{c}$ and $\delta.$ These considerations are formally proven in Proposition \ref{Prop_Upper_Zeta}.

\begin{figure}[ht]
	\begin{center}
		\includegraphics[scale=0.65]{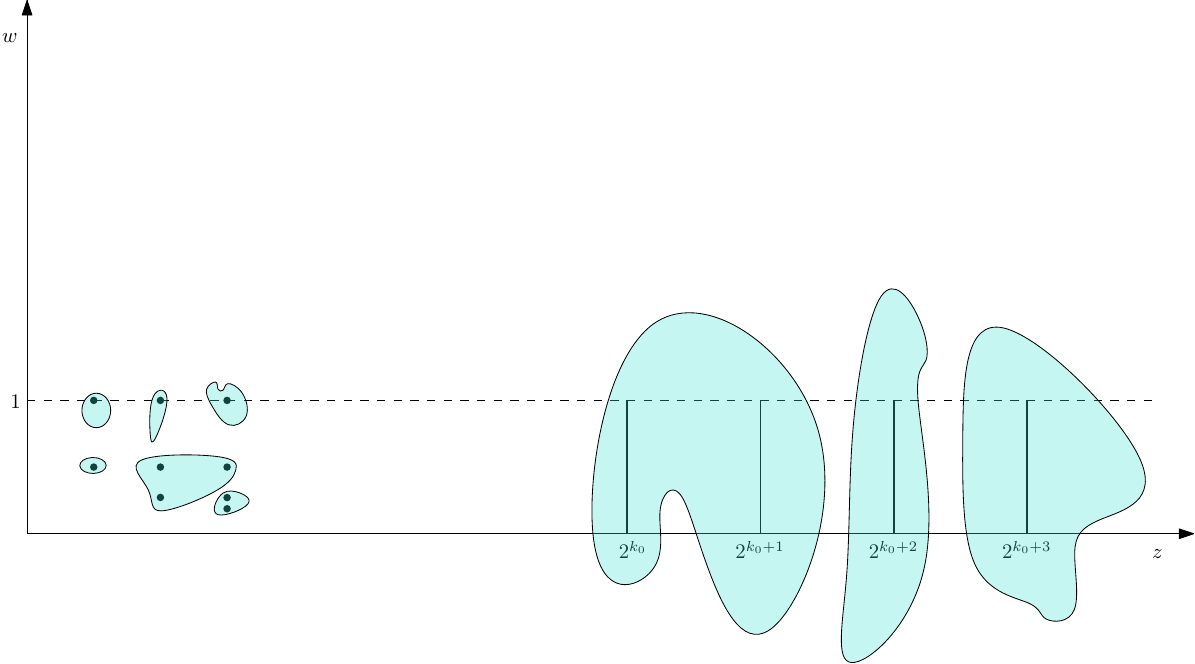}
		\caption{Merging of zeros starting from $2^{k_0}$}
		\label{Zeros_Upper}
	\end{center}
\end{figure}

\begin{lemma}\label{Lemma_Peninsula} For each $\delta \geq 2^{\frac{(i-1)i}{2}-c_i^2}$ the whole interval $\{2^i \} \times [0, 2^{c_i-\frac{(i-1)i}{2c_i}} \delta^\frac{1}{c_i}]$ is contained in $\{ |F|\leq \delta \}.$
\end{lemma}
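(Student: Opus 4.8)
The plan is to exhibit an explicit upper bound on $|F(2^i, w)|$ for $w$ in the given interval and then check that this bound is at most $\delta$ under the stated hypothesis on $\delta$. The key observation is that at $z = 2^i$ the factor $g_j(2^i)$ behaves very differently depending on whether $j \neq i$ or $j = i$: since $g(z) = \prod_{\ell=1}^\infty (1 - z/2^\ell)$ vanishes at $z = 2^i$, for $j \neq i$ the $\ell = i$ factor of $g_j$ is $(1 - 2^i/2^i) = 0$, so $g_j(2^i) = 0$. Hence in the series $f(z,w) = \sum_j 2^{-c_j^2} g_j(z) P_{c_j}(w)$, only the $j = i$ term survives at $z = 2^i$. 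Thus I would first reduce to computing $|F(2^i, w)| = \sqrt{|g(2^i)|^2 + |f(2^i, w)|^2} = |f(2^i, w)| = 2^{-c_i^2} |g_i(2^i)| \cdot |P_{c_i}(w)|$, using $g(2^i) = 0$.

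Next I would estimate the two remaining factors. For $|g_i(2^i)| = \prod_{\ell \neq i}|1 - 2^{i-\ell}| = \prod_{\ell \neq i}|1 - 2^{i-\ell}|$: the factors with $\ell < i$ contribute $\prod_{\ell=1}^{i-1}(2^{i-\ell} - 1) < \prod_{m=1}^{i-1} 2^m = 2^{(i-1)i/2}$, while the factors with $\ell > i$ are each $< 1$, in fact their product is $\prod_{\ell > i}(1 - 2^{i-\ell}) < 1$. So $|g_i(2^i)| < 2^{(i-1)i/2}$. For $|P_{c_i}(w)| = \prod_{k=1}^{c_i}|w - 1/k|$ with $w$ real in $[0, M]$ where $M = 2^{c_i - \frac{(i-1)i}{2c_i}}\delta^{1/c_i}$: provided $M \geq 1$, which should hold under the hypothesis, we have $|w - 1/k| \leq M$ for each $k$ (since $0 \leq 1/k \leq 1 \leq M \leq w$ or $w \leq M$ makes $|w-1/k| \leq \max(M, 1) = M$), so $|P_{c_i}(w)| \leq M^{c_i}$. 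Combining, $|F(2^i,w)| < 2^{-c_i^2} \cdot 2^{(i-1)i/2} \cdot M^{c_i} = 2^{-c_i^2 + (i-1)i/2} \cdot 2^{c_i^2 - (i-1)i/2}\delta = \delta$, as desired.

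The one point requiring a little care — and the likeliest source of an off-by-one or a sign issue — is the bound $|P_{c_i}(w)| \leq M^{c_i}$ near the endpoints: when $w$ is small (near $0$) the factors $|w - 1/k|$ are bounded by $1$ rather than by $M$, so I should phrase it as $|w - 1/k| \leq \max(w, 1) \leq M$, valid exactly because the hypothesis $\delta \geq 2^{\frac{(i-1)i}{2} - c_i^2}$ forces $M = 2^{c_i - \frac{(i-1)i}{2c_i}}\delta^{1/c_i} \geq 2^{c_i - \frac{(i-1)i}{2c_i}} \cdot 2^{\frac{(i-1)i}{2c_i} - c_i} = 1$. I would state this as the first step of the proof. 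The rest is the routine product estimate for $|g_i(2^i)|$ together with the vanishing $g(2^i) = 0$ that kills both the $g$-component of $F$ and all off-diagonal terms of $f$; no genuine obstacle is expected.
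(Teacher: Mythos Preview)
Your proposal is correct and follows essentially the same approach as the paper: both identify that $g(2^i)=0$ and that only the $j=i$ term of $f$ survives at $z=2^i$, bound $|g_i(2^i)|<2^{(i-1)i/2}$ by the obvious product estimate, and then control $|P_{c_i}(w)|$. The only cosmetic difference is that the paper splits into the two cases $w\in[0,1]$ (where $|P_{c_i}(w)|<1$) and $w\in(1,M]$ (where $|P_{c_i}(w)|<w^{c_i}$), whereas you combine these via $|w-1/k|\leq\max(w,1)\leq M$; your version is slightly more streamlined but equivalent.
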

\begin{proof}
Firstly we notice that
$$|g_i(2^i)| = \prod_{j=1}^{i-1}(2^{i-j}-1) \cdot \prod_{j=i+1}^\infty (1-2^{i-j}) < \prod_{j=1}^{i-1}2^{i-j} = 2^{\frac{(i-1)i}{2}}.$$
Secondly
$$|F(2^i,w)|=|f(2^i,w)|=2^{-c_i^2}|g_i(2^i)| |P_{c_i}(w)|<2^{\frac{(i-1)i}{2}-c_i^2}|P_{c_i}(w)|.$$
Now, if $w\in [0,1]$, $|P_{c_i}(w)|<1$ and the claim follows by the assumption on $\delta$. If $w\in (1, 2^{c_i-\frac{(i-1)i}{2c_i}} \delta^\frac{1}{c_i}]$, $|P_{c_i}(w)|<w^{c_i}$ and thus $|F(2^i,w)|< 2^{\frac{(i-1)i}{2}-c_i^2} w^{c_i} \leq \delta$ and the claim follows.
\end{proof}

\begin{cor}\label{Merging_zeros}
If $\delta\geq 2^{\frac{-i(i+1)}{2}}$ then the whole interval $\{2^i \} \times [0,1]$ is contained in $\{ |F|\leq \delta \}.$
\end{cor}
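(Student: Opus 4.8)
The plan is to deduce this immediately from Lemma \ref{Lemma_Peninsula}. The first step is to record the elementary fact that, since $\mathfrak{c}=\{c_i\}$ is a strictly increasing sequence of positive integers, one has $c_i \geq i$ (an easy induction: $c_1\geq 1$, and $c_{i+1} > c_i \geq i$ forces $c_{i+1}\geq i+1$), hence $c_i^2 \geq i^2$. Consequently
$$\frac{(i-1)i}{2} - c_i^2 \;\leq\; \frac{(i-1)i}{2} - i^2 \;=\; -\frac{i(i+1)}{2},$$
so the hypothesis $\delta \geq 2^{-i(i+1)/2}$ implies $\delta \geq 2^{(i-1)i/2 - c_i^2}$, which is exactly the condition on $\delta$ required to invoke Lemma \ref{Lemma_Peninsula}.

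The second step is to check that the interval furnished by Lemma \ref{Lemma_Peninsula} actually contains $[0,1]$, i.e. that its right endpoint satisfies $2^{\,c_i - (i-1)i/(2c_i)}\,\delta^{1/c_i} \geq 1$. Taking $\log$ (to base $2$) and multiplying through by $c_i > 0$, this inequality is equivalent to $c_i^2 - \tfrac{(i-1)i}{2} + \log\delta \geq 0$, i.e. to $\delta \geq 2^{(i-1)i/2 - c_i^2}$ — the very inequality established in the first step. Therefore $[0,1]\subseteq [0,\,2^{\,c_i-(i-1)i/(2c_i)}\delta^{1/c_i}]$, and Lemma \ref{Lemma_Peninsula} yields
$$\{2^i\}\times[0,1] \;\subseteq\; \{2^i\}\times\bigl[0,\,2^{\,c_i-(i-1)i/(2c_i)}\delta^{1/c_i}\bigr] \;\subseteq\; \{|F|\leq\delta\},$$
which is the assertion of the corollary.

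I do not expect a genuine obstacle: the one thing worth flagging is the observation that the \emph{applicability} condition of Lemma \ref{Lemma_Peninsula} and the \emph{endpoint} condition $2^{c_i-(i-1)i/(2c_i)}\delta^{1/c_i}\geq 1$ are literally the same inequality $\log\delta \geq (i-1)i/2 - c_i^2$; once this is noticed, the corollary reduces entirely to the bound $c_i \geq i$, so the argument is short. (If one preferred to avoid Lemma \ref{Lemma_Peninsula}, one could instead estimate $|F(2^i,w)| = 2^{-c_i^2}|g_i(2^i)|\,|P_{c_i}(w)| < 2^{(i-1)i/2 - c_i^2} \leq 2^{-i(i+1)/2} \leq \delta$ directly for $w\in[0,1]$, using $|g_i(2^i)| < 2^{(i-1)i/2}$ and $|P_{c_i}(w)| < 1$, but the route via Lemma \ref{Lemma_Peninsula} is cleaner and will also be reused later.)
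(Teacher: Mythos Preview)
Your proof is correct and follows essentially the same route as the paper: use $c_i\geq i$ to deduce $\delta \geq 2^{(i-1)i/2 - c_i^2}$ from the hypothesis, then apply Lemma \ref{Lemma_Peninsula} and note that the resulting interval contains $[0,1]$. The paper's version is terser (it asserts the containment $[0,1]\subset[0,2^{c_i-(i-1)i/(2c_i)}\delta^{1/c_i}]$ without spelling out the endpoint check), but the argument is the same.
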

\begin{proof}
Since $c_i^2\geq i^2$, $\delta\geq 2^{\frac{-i(i+1)}{2}}$ implies that $\delta \geq 2^{\frac{(i-1)i}{2}-c_i^2}$ and thus
$$\{2^i \} \times [0,1]\subset \{2^i \} \times [0, 2^{c_i-\frac{(i-1)i}{2c_i}} \delta^\frac{1}{c_i}] \subset \{ |F|\leq \delta \} $$
by Lemma \ref{Lemma_Peninsula}.
\end{proof}

\begin{prop}\label{Prop_Upper_Zeta} The following estimates hold for $r>2:$
$$\zeta(F,r,\delta) \leq \begin{cases}
   \sum\limits_{1\leq i \leq \log r} c_i , \text{ if } 0<\delta<2^{\frac{-\log r(\log r +1)}{2}}\\
     \log r+2- \sqrt{-2 \log \delta}+ \sum\limits_{1\leq i <\sqrt{-2 \log \delta} } c_i,\text{ if } 2^{\frac{-\log r(\log r +1)}{2}}\leq \delta < \frac{1}{2}\\
     \log r ,\text{ if } \delta \geq \frac{1}{2}\\
    \end{cases}.$$
\end{prop}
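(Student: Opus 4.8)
The plan is to bound $\zeta(F,r,\delta)$ by separately counting (a) the intervals $\{2^i\}\times[0,1]$ on which all zeros merge into a single connected component of $\{|F|\le\delta\}$ and hence contribute at most $1$ each, and (b) the remaining "small" indices $i$ where no merging is guaranteed and we fall back on the crude bound that the number of zeros with first coordinate $2^i$ is exactly $c_i$ (the zeros $(2^i,1/j)$, $j=1,\dots,c_i$). First I would record that $\zeta(F,r,\delta)$ only counts connected components of $\{|F|\le\delta\}\cap B_r$ containing a zero of $F$, that every zero of $F$ lies on some interval $I_i:=\{2^i\}\times[0,1]$ with $2^i\le r$, i.e.\ $1\le i\le \log r$, and that two zeros lying in the same such $I_i$ that is entirely contained in $\{|F|\le\delta\}$ get counted at most once. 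Thus
\[
\zeta(F,r,\delta)\ \le\ \#\{\,i:\ 1\le i\le\log r,\ I_i\subset\{|F|\le\delta\}\,\}\ +\ \sum_{\substack{1\le i\le\log r\\ I_i\not\subset\{|F|\le\delta\}}} c_i\ \le\ \log r\ +\ \sum_{\substack{1\le i\le\log r\\ I_i\not\subset\{|F|\le\delta\}}} c_i .
\]
(For components touching $S_r$ one needs a short remark that such a peninsula still meets some $I_i$ with $2^i\le r$, so it is absorbed into the count; this is where a little care is needed, but it does not change the bound.)

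Next I would invoke Corollary \ref{Merging_zeros}: $I_i\subset\{|F|\le\delta\}$ as soon as $\delta\ge 2^{-i(i+1)/2}$, equivalently $i(i+1)/2\ge -\log\delta$, which certainly holds whenever $i\ge\sqrt{-2\log\delta}$ (since $i(i+1)/2\ge i^2/2$). So the only indices $i$ for which $I_i$ may fail to be contained in $\{|F|\le\delta\}$ satisfy $1\le i<\sqrt{-2\log\delta}$. The three cases of the Proposition are then just the three regimes of how $\sqrt{-2\log\delta}$ compares with $1$ and with $\log r$:

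If $\delta\ge\tfrac12$, then $-2\log\delta\le 2$... actually $-\log\delta\le 1$ so $\sqrt{-2\log\delta}\le\sqrt2<2$, but more to the point the condition $\delta\ge 2^{-i(i+1)/2}$ holds for every $i\ge1$ (as $i(i+1)/2\ge1$), so $I_i\subset\{|F|\le\delta\}$ for all $i$ and the error sum is empty, giving $\zeta(F,r,\delta)\le\log r$. If $2^{-\log r(\log r+1)/2}\le\delta<\tfrac12$, set $s:=\sqrt{-2\log\delta}$; the lower bound on $\delta$ guarantees $s\le$ (something $\le\log r+1$), so the "good" indices $i$ with $\lceil s\rceil\le i\le\log r$ number at least $\log r-s+1$ minus a rounding term, hence the count of good indices is at most... one reorganizes the two pieces as in the statement: the $\log r$ good-index slots get replaced by $\log r+2-s$ (accounting for the $i<s$ being removed from the "$\le\log r$ ones each" tally and for integer rounding), plus the crude term $\sum_{1\le i<s}c_i$ for the bad indices. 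Finally, if $0<\delta<2^{-\log r(\log r+1)/2}$, then $s>\log r$, i.e.\ \emph{every} index $1\le i\le\log r$ is potentially bad, so we just use $\zeta(F,r,\delta)\le\sum_{1\le i\le\log r}c_i$.

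The main obstacle will be bookkeeping rather than mathematics: pinning down the integer rounding so that the middle case comes out with exactly the constants $\log r+2-\sqrt{-2\log\delta}$ and the sum over $1\le i<\sqrt{-2\log\delta}$, and making the splitting $\zeta\le(\text{good indices})+(\text{bad-index zero counts})$ airtight — in particular checking that a connected component of $\{|F|\le\delta\}\cap B_r$ cannot straddle two distinct intervals $I_i,I_{i'}$ with $i\ne i'$ in a way that would let it be counted against neither. For the latter one uses the already-announced geometric fact (proved via the hyperplanes $H_k$) that for $i$ large the component of $\{|F|\le\delta\}$ through $I_i$ stays between $\{\Re z=2^{i-1}+2^{i-2}\}$ and $\{\Re z=2^i+2^{i-1}\}$, hence meets no other $I_{i'}$; for small $i$ one simply uses the trivial bound $c_i$ on the number of zeros there, which dominates whatever the true component count is. Assembling these observations yields the three stated inequalities.
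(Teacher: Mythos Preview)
Your approach is correct and matches the paper's: it too splits into ``bad'' indices $i\le k$ (crude bound $\sum c_i$) and ``good'' indices $i\ge k+1$ where $I_i\subset\{|F|\le\delta\}$ contributes at most one component, parametrizing the middle case via $r_0$ with $\delta=2^{-\log r_0(\log r_0+1)/2}$ and the integer $k$ with $2^k<r_0\le2^{k+1}$, and then using $k<\sqrt{-2\log\delta}<k+2$ to extract the stated constants. Your worry about a component straddling two intervals $I_i,I_{i'}$ and being ``counted against neither'' is unnecessary for this \emph{upper} bound---simply charge each component $\Omega$ to any zero it contains (to the zero itself if its index is bad, to the index $i$ if good, noting that $I_i\cap B_r$ is connected so distinct $\Omega$'s receive distinct good $i$); the hyperplane separation via $H_k$ is only needed for the \emph{lower} bound in Proposition~\ref{Prop_Lower_Zeta}.
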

\begin{proof}
Firstly, we notice that for all $r\geq 2$
$$\dim H_0(F^{-1}(0) \cap B_r)= \text{ number of zeros in } B_r \leq \sum\limits_{1\leq i \leq \log r} c_i,$$
and hence $\zeta(F,r,\delta) \leq \sum\limits_{1\leq i \leq \log r} c_i$ which proves the first case of the proposition.

Now, we treat the third case, i.e. $\delta \geq \frac{1}{2}.$ In this case, by Corollary \ref{Merging_zeros} all intervals $\{2^i \} \times [0,1], i\geq 1$ are contained in $\{ |F|\leq \delta \}.$ Thus $\dim H_0(\{|F|\leq \delta \} \cap B_r)$ equals the number of intervals $\{2^i \} \times [0,1], i\geq 1$ which intersect $B_r.$ This number is not greater than $\log r$ and thus $ \zeta(F,r,\delta) \leq  \log r.$

Finally, we treat the second case, i.e. $2^{\frac{-\log r(\log r +1)}{2}}\leq \delta < \frac{1}{2}.$ Denote by $r_0>2$ the unique real number such that $\delta =2^{\frac{-\log r_0(\log r_0 +1)}{2}}.$ By the assumption, $r_0\leq r.$ Let $k\geq 1$ be an integer such that $2^k<r_0 \leq 2^{k+1}.$ Now $2^\frac{-(k+1)(k+2)}{2}\leq \delta$ and hence by Corollary \ref{Merging_zeros}, $\{ |F|\leq \delta \}$ contains all interval $\{2^i \}\times [0,1], i \geq k+1.$ Thus
\begin{equation}\label{I+II_bound}
\dim (H_0(\{ |F|\leq \delta \} \cap B_r) \leq \textrm{I} + \textrm{II},
\end{equation}
where 
$$\textrm{I}= \text{ the number of zeros in } B_r \cap \cup_{i=1}^k \{ 2^i \} \times [0,1],$$
and
$$\textrm{II}= \text{ the number of intervals } \{ 2^i \} \times [0,1], i\geq k+1  \text{ which intersect } B_r .$$
From (\ref{I+II_bound}) it follows that 
$$\zeta(F,r,\delta) \leq \textrm{I} + \textrm{II}.$$
Since $k<\log r_0 < \sqrt{-2 \log \delta}$ we have that
$$\textrm{I} \leq \sum_{1\leq i <\sqrt{-2 \log \delta} } c_i .$$
On the other hand,  $r\geq r_0$ and thus $\log r \geq \log r_0 >k$ as well as
$$\textrm{II} \leq \log r -k.$$
Lastly, we use $\sqrt{-2\log \delta}<\log r_0+1 \leq k+2$ to obtain the desired inequality.
\end{proof}

We will now provide a lower bound for $\zeta(F,r,\delta).$ As before, we start by explaining the geometric intuition. 

As $r$ increases,new zeros of $F$ appear on intervals $\{2^k\} \times [0,1]$, i.e. at a rate $\log r.$ We wish to prove that zeros on different intervals will not be counted as one zero in the coarse count $\zeta(F,r,\delta).$  Precisely, in Lemma \ref{Lemma_Separated}, we prove that for a fixed $\delta$, there exists $k_0$ which depends only on $\delta$, such that the set $\{ |F|\leq \delta \}$ does not intersect any of the hyperplanes $H_k=\{ Re(z)=2^k+2^{k-1} \}$ for $k\geq k_0.$ Since $H_k$ separates intervals $\{2^k\} \times [0,1]$ and $\{2^{k+1} \} \times [0,1]$,  $\{|F|\leq \delta \}$ can not contain zeros from different intervals for $k\geq k_0$, see Figure \ref{Figure_Zeros_Lower}. Similarly to the case of the upper bound, this implies that
$$\zeta(F,r,\delta)\geq \log r + \text{the error term},$$
where the error term comes from zeros on intervals $\{2^k \}\times [0,1]$ for $k<k_0$ where we can not guarantee separation of components of $\{ |F| \leq \delta \}.$ These considerations are formally proven in Proposition \ref{Prop_Lower_Zeta}.

\begin{figure}[ht]
	\begin{center}
		\includegraphics[scale=0.65]{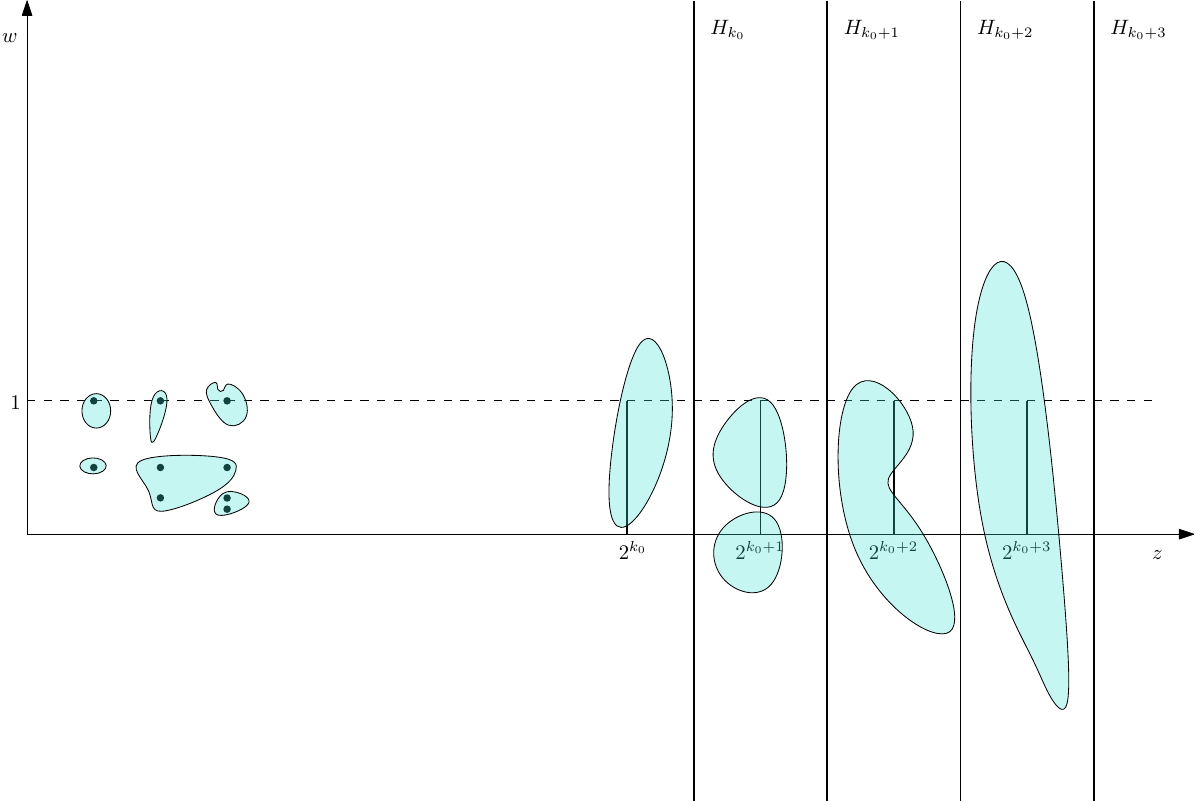}
		\caption{Separation of zeros starting from $2^{k_0}$}
		\label{Figure_Zeros_Lower}
	\end{center}
\end{figure}

In the lemma and the proposition that follow, we denote by $C_0=\frac{1}{2}\prod_{i=1}^\infty \left(1-\frac{3}{2^{i+1}} \right).$

\begin{lemma}\label{Lemma_Separated}
Let $z\in \C$ be such that $Re(z)=2^k+2^{k-1}$ for some integer $k\geq 1.$ Then for all $w\in \C$ it holds
$$|F(z,w)|>C_0 2^\frac{(k-1)k}{2}.$$
\end{lemma}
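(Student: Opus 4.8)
The plan is to show that on the hyperplane $\{\mathrm{Re}(z) = 2^k + 2^{k-1}\}$ the first component $g(z)$ of $F$ is already large, so that $|F(z,w)| \geq |g(z)|$ gives the bound with no reference to $w$ at all. Write $z = 2^k + 2^{k-1} + it$ for $t \in \R$. I would estimate $|g(z)| = \prod_{i=1}^\infty |1 - z/2^i|$ by controlling each factor from below: split the product into the indices $i \leq k-1$, $i = k$, and $i \geq k+1$. For $i \leq k-1$ the factor $|1 - z/2^i| \geq |\,\mathrm{Re}(1 - z/2^i)\,| = |1 - (2^k+2^{k-1})/2^i| = (2^k + 2^{k-1})/2^i - 1$, and the product of these over $i = 1, \ldots, k-1$ is of size $\approx 2^{(k-1)k/2}$ up to a convergent correction factor. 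For $i \geq k+1$ one bounds $|1 - z/2^i| \geq 1 - (2^k + 2^{k-1})/2^i = 1 - 3/2^{\,i-k+1}$, whose infinite product over $i \geq k+1$ equals $\prod_{j\geq 1}(1 - 3/2^{j+1})$, a positive constant; this, together with the $i=k$ factor $|1 - z/2^k| = |{-1/2} + it/2^k| \geq 1/2$, is exactly where the constant $C_0 = \tfrac12 \prod_{i=1}^\infty(1 - 3/2^{i+1})$ comes from. Multiplying the three contributions gives $|g(z)| > C_0\, 2^{(k-1)k/2}$, and since $|F(z,w)| = \sqrt{|g(z)|^2 + |f(z,w)|^2} \geq |g(z)|$, the lemma follows.

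The one point requiring a little care is making the lower bound on the tail product $\prod_{i \leq k-1}\big((2^k+2^{k-1})/2^i - 1\big)$ honest: I would compare it to $\prod_{i\leq k-1}(2^k+2^{k-1})/2^i \cdot \prod_{i\leq k-1}\big(1 - 2^i/(2^k+2^{k-1})\big)$, note that the first product is $(3/2)^{k-1} 2^{(k-1)k/2}$, and that the second is bounded below by a convergent product $\prod_{j\geq 1}(1 - 2^{-j})$ (reindexing $j = k - i$ and using $2^i/(2^k + 2^{k-1}) < 2^{i-k}$); the factor $(3/2)^{k-1}$ only helps. Absorbing all the constant products and the $i=k$ factor of $1/2$ into $C_0$ exactly as defined in the statement, one reaches $|g(z)| > C_0\, 2^{(k-1)k/2}$. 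Since we only ever used $\mathrm{Re}(z)$, the estimate is uniform in $t = \mathrm{Im}(z)$ and in $w$, which is precisely what is claimed. The main (mild) obstacle is simply bookkeeping the split of the infinite product and checking that the constants match the given $C_0$; there is no real analytic difficulty, as the $w$-dependence drops out entirely.
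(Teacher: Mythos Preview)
Your approach is essentially the paper's: bound $|F(z,w)| \geq |g(z)|$, use $|1 - z/2^i| \geq |1 - \mathrm{Re}(z)/2^i|$ for each factor, and split the infinite product at $i=k$. The tail $\prod_{i\geq k+1}$ and the middle factor $i=k$ are handled exactly as in the paper and correctly reproduce the constant $C_0 = \tfrac12\prod_{i\geq 1}(1-3/2^{i+1})$.

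There is, however, a genuine (if minor) gap in your handling of the head product $\prod_{i=1}^{k-1}\big((2^k+2^{k-1})/2^i - 1\big)$. Your factorization followed by the estimate $\prod_{i\leq k-1}\big(1 - 2^i/(2^k+2^{k-1})\big) \geq \prod_{j\geq 1}(1 - 2^{-j})$ yields
\[
\prod_{i=1}^{k-1}\Big(\tfrac{2^k+2^{k-1}}{2^i} - 1\Big) \;\geq\; (3/2)^{k-1}\cdot\Big(\prod_{j\geq 1}(1-2^{-j})\Big)\cdot 2^{(k-1)k/2},
\]
and since $\prod_{j\geq 1}(1-2^{-j}) \approx 0.289$, the prefactor $(3/2)^{k-1}\cdot 0.289$ is strictly less than $1$ for $k\in\{1,2,3,4\}$. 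So the constants do \emph{not} ``absorb into $C_0$ exactly as defined'': your bound delivers a constant strictly smaller than $C_0$ for small $k$. The fix, which is what the paper (implicitly) does, is simpler than your detour: observe directly that each factor satisfies
\[
\tfrac{2^k+2^{k-1}}{2^i} - 1 \;=\; 3\cdot 2^{k-1-i} - 1 \;\geq\; 2\cdot 2^{k-1-i} \;=\; 2^{k-i}
\]
for $1\leq i\leq k-1$, whence $\prod_{i=1}^{k-1}\big(\tfrac{2^k+2^{k-1}}{2^i} - 1\big) \geq 2^{(k-1)k/2}$ on the nose, and the stated constant $C_0$ is recovered.
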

\begin{proof}
We estimate
$$|F(z,w)|\geq |g(z)| = \prod_{i=1}^\infty |1-2^{-i}z|\geq \prod_{i=1}^\infty |1-2^{-i} Re(z)|=$$
$$=\prod_{i=1}^{k-1} \left(\frac{2^k+2^{k-1}}{2^i}-1 \right)\cdot \frac{1}{2} \cdot \prod_{i=k+1}^\infty \left(1-\frac{2^k+2^{k-1}}{2^i} \right)>C_0 2^\frac{(k-1)k}{2}.$$
\end{proof}

\begin{prop}\label{Prop_Lower_Zeta} For all $r\geq 2$ it holds
$$\zeta(F,r,\delta) \geq \begin{cases}
  \lfloor \log r \rfloor -1, \text{ if } \delta\leq C_0 \\
    \lfloor \log r \rfloor -\sqrt{2\log \delta - 2 \log C_0} -2,\text{ if } C_0< \delta \leq C_0 r^\frac{\log r -1}{2}\\
    \end{cases}.$$
\end{prop}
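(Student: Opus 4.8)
The strategy mirrors the upper bound argument of Proposition \ref{Prop_Upper_Zeta}, but now we need a \emph{lower} bound on the number of connected components of $\{|F| \leq \delta\} \cap B_r$ that contain zeros of $F$. The key geometric input is Lemma \ref{Lemma_Separated}: the hyperplanes $H_k = \{\mathrm{Re}(z) = 2^k + 2^{k-1}\}$ satisfy $|F| > C_0\, 2^{(k-1)k/2}$ on them. First I would fix $\delta$ and determine the threshold $k_0$ beyond which $\{|F| \leq \delta\}$ is disjoint from $H_k$: the condition $C_0\, 2^{(k-1)k/2} \geq \delta$ holds precisely when $(k-1)k/2 \geq \log(\delta/C_0)$, i.e. roughly when $k \geq \sqrt{2\log(\delta/C_0)}$ (this is where the square-root error term will come from). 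In the first case $\delta \leq C_0$ we get $k_0 = 1$ (or a small absolute constant), while in the second case $k_0 \approx \sqrt{2\log\delta - 2\log C_0} + O(1)$.

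\textbf{Separating the zeros.} Once $k \geq k_0$, the hyperplane $H_k$ separates $\C^2$ (or rather $B_r$) into two pieces, and $\{|F|\leq \delta\}$ lies entirely in one side or the other; in particular the zero $(2^k, 1)$ (which lies on $\{2^k\}\times[0,1]$, with $\mathrm{Re} = 2^k < 2^k + 2^{k-1}$) and the zero $(2^{k+1},1)$ (with $\mathrm{Re} = 2^{k+1} > 2^k + 2^{k-1}$) must lie in \emph{different} connected components of $\{|F|\leq\delta\}\cap B_r$, because any path between them within that set would have to cross $H_k$. Hence for each integer $k$ with $k_0 \leq k \leq \lfloor \log r \rfloor - 1$, the slices $\{2^k\}\times[0,1]$ and $\{2^{k+1}\}\times[0,1]$ — both of which carry zeros of $F$ and both of which lie inside $B_r$ once $2^{k+1} \leq r$, i.e. $k+1 \leq \log r$ — sit in distinct components. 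A chain argument then shows that the slices $\{2^{k_0}\}\times[0,1], \{2^{k_0+1}\}\times[0,1], \ldots, \{2^{\lfloor\log r\rfloor - 1\rfloor}\}\times[0,1]$ contribute at least $\lfloor\log r\rfloor - k_0$ pairwise distinct components containing zeros, giving $\zeta(F,r,\delta) \geq \lfloor\log r\rfloor - k_0$. I need to be slightly careful that $r \geq 2$ and the relevant slices genuinely lie in $B_r$: the slice $\{2^k\}\times[0,1]$ is contained in $B_r$ once $\sqrt{2^{2k}+1} \leq r$, which is guaranteed for $2^{k} \leq r/\sqrt{2}$, hence certainly for $k \leq \log r - 1$; this accounts for the "$-1$" and "$-2$" in the two cases.

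\textbf{Putting the cases together.} In the case $\delta \leq C_0$ we may take $k_0 = 1$, and the slices $\{2^1\}\times[0,1], \ldots, \{2^{\lfloor\log r\rfloor-1}\}\times[0,1]$ give $\zeta(F,r,\delta) \geq \lfloor\log r\rfloor - 1$. In the case $C_0 < \delta \leq C_0 r^{(\log r - 1)/2}$, we set $k_0 = \lceil \sqrt{2\log\delta - 2\log C_0}\rceil + 1$ or so; the upper restriction $\delta \leq C_0 r^{(\log r-1)/2}$ is exactly what ensures $k_0 \leq \lfloor\log r\rfloor - 1$ so that there is at least one valid slice, and one checks $\sqrt{2\log(\delta/C_0)} < \log r - 1$ rearranges to $\log\delta - \log C_0 < (\log r - 1)^2/2$, i.e. $\delta < C_0\, 2^{(\log r - 1)^2/2}$; I should reconcile this with the stated bound $C_0 r^{(\log r-1)/2} = C_0\, 2^{(\log r)(\log r - 1)/2}$ (the paper's exponent), adjusting the additive constant accordingly. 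The bound then reads $\zeta(F,r,\delta) \geq \lfloor\log r\rfloor - \sqrt{2\log\delta - 2\log C_0} - 2$.

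\textbf{Main obstacle.} The one genuinely nontrivial point is the topological separation claim: I must verify that $\{|F|\leq\delta\}\cap B_r$ really is disconnected by $H_k$ in a way that forces the two prescribed zeros into different components — this uses that $H_k$ is a \emph{real} hyperplane (codimension one in $\R^4$) so $B_r \setminus H_k$ has exactly two components, that $\{|F|\leq\delta\}$ meets both of them (it contains the relevant slices, which lie strictly on opposite sides since $2^k < 2^k+2^{k-1} < 2^{k+1}$), and that $\{|F|\leq\delta\}$ avoids $H_k$ entirely by Lemma \ref{Lemma_Separated}. Everything else — the arithmetic of $k_0$, the counting of slices inside $B_r$, and the square-root estimates — is routine bookkeeping. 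I would also remark that this lower bound, combined with Proposition \ref{Prop_Upper_Zeta}, is what yields $\zeta(F,r,\delta) = \Theta(\log r)$ and, after a finer analysis, the asymptotic $\zeta(F,r,\delta) \sim \log r$ claimed in Theorem \ref{Thm:CS_Coarse_Count}.
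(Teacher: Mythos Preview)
Your proposal is correct and follows essentially the same route as the paper: use Lemma \ref{Lemma_Separated} to show $\{|F|\leq\delta\}$ avoids the hyperplanes $H_k$ for $k$ beyond a threshold $k_0$ (equal to $1$ when $\delta\leq C_0$, and of order $\sqrt{2\log(\delta/C_0)}$ otherwise), then conclude that the zeros $(2^i,1)$ for $i>k_0$ lie in pairwise distinct components of $\{|F|\leq\delta\}\cap B_r$. The paper organizes the second case via an auxiliary radius $r_0$ with $\delta=C_0\,r_0^{(\log r_0-1)/2}$ and $2^k\leq r_0<2^{k+1}$ rather than solving directly for $k_0$, but the arithmetic and the topological separation argument are the same as yours.
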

\begin{proof}
We first prove the case $\delta \leq C_0.$ By Lemma \ref{Lemma_Separated}, we see that on each hyperplane $\{ Re(z)=2^i+2^{i-1} \}, i\geq 1$ it holds $|F(z,w)|>C_0\geq \delta.$ Hence $\{ |F|\leq \delta \}$ does not intersect any of these hyperplanes and in particular zeros $(2^i,1),i\geq 1$ all belong to different connected components of $\{ |F|\leq \delta\}.$ In other words
$$\zeta(F,r,\delta)\geq \text{the number of points }  (2^i,1), i\geq 1  \text{ in } B_r\geq \lfloor \log r \rfloor-1,$$
which finishes the proof of the first case.

To prove the second case, we firstly denote by $r_0> 2$ the unique real number such that $\delta = C_0 r_0^\frac{\log r_0 -1}{2}.$ By assumption $r_0\leq r$ and we denote by $k\geq 1$ an integer such that $2^k \leq r_0 < 2^{k+1}.$ Now $\delta < C_0 2^{\frac{k(k+1)}{2}}$ and Lemma \ref{Lemma_Separated} implies that $\{ |F| \leq \delta \}$ does not intersect hyperplanes $\{Re(z)=2^i+2^{i-1} \}, i\geq k+1.$ As in the first case, zeros $(2^i,1),i\geq k+1$ belong to different connected components of $\{ |F|\leq \delta \}$ and thus
$$\zeta(F,r,\delta)\geq \text{the number of points }  (2^i,1), i\geq k+1  \text{ in } B_r\geq \lfloor \log r \rfloor -1-k.$$
Since $\log \delta =\log C_0 + \frac{1}{2}\log r_0 (\log r_0 -1) > \log C_0 + \frac{1}{2}(k-1)^2$, we have that $k<\sqrt{2\log \delta - 2\log C_0}+1$ and the claim follows. 
\end{proof}
\section{Counting islands}\label{sec: counting islands}
Let us start with the proof of Theorem \ref{prop: rouche} which, as was mentioned in the introduction, is a an easy corollary of Rouch\'e's theorem.
\begin{proof}[Proof of Theorem \ref{prop: rouche}]
 By Rouch\'e's theorem for analytic mappings from $\C^n$ to $\C^n$ (see e.g. \cite[Section 2.1.3]{DAngelo}) if $g:\C^n \to \C^n$ is a polynomial mapping of degree at most $k$ such that $d_{C^0}(f|_{B_r},g|_{B_r}) < \delta/2,$ then \[\tau(f,r,\delta) \leq \tau(g,r,\delta/2).\] By B\'ezout's theorem, however, \[\tau(g,r,\delta/2) \leq k^n.\] By Proposition \ref{lma: Cauchy harmonic}, it is enough to take $k$ such that $C_a a^{-k} \mu(f,ar) < \delta/2.$ It is easy to see that the optimal such $k$ satisfies \[k \leq C'_a \log(\mu(f,ar)/\delta).\] Combining the three displayed inequalities finishes the proof. 
\end{proof}
Before we give a formal proof of Theorem \ref{Proposition_Islands_Count}, let us briefly explain the geometric intuition as we did for the proof of Theorem \ref{Thm:CS_Coarse_Count}. As we already explained, for a fixed $\delta$, the sublevel set $\{ |F| \leq \delta \}$ stabilizes starting from $r=2^{k_0}$ into components which contain intervals $\{2^k \} \times [0,1]$ , $k\geq k_0$, but which are separated by hyperplanes $H_k=\{ Re(z)=2^k+2^{k-1} \}. $ However, we will show that these components in fact contain intervals $\{2^k \} \times [0,L(k)]$ where $L(k)$ can grow arbitrarily fast, the lower bound on growth depending on $\mathfrak{c}.$ This follows from Lemma \ref{Lemma_Peninsula}. In other words, components of $\{|F|\leq \delta \}$ get elongated in $w$-direction and thus they partly remain outside of $B_r$ for very large $r$, as shown on Figure \ref{Figure_Zeros_Islands}. Due to this elongation, most of the components of $\{|F|\leq \delta \}$ only contribute to $\zeta(F,r,\delta)$ and not to $\zeta^0(F,r,\delta)$, which leads to the upper bound given by Theorem \ref{Proposition_Islands_Count}.

\begin{figure}[ht]
	\begin{center}
		\includegraphics[scale=0.80]{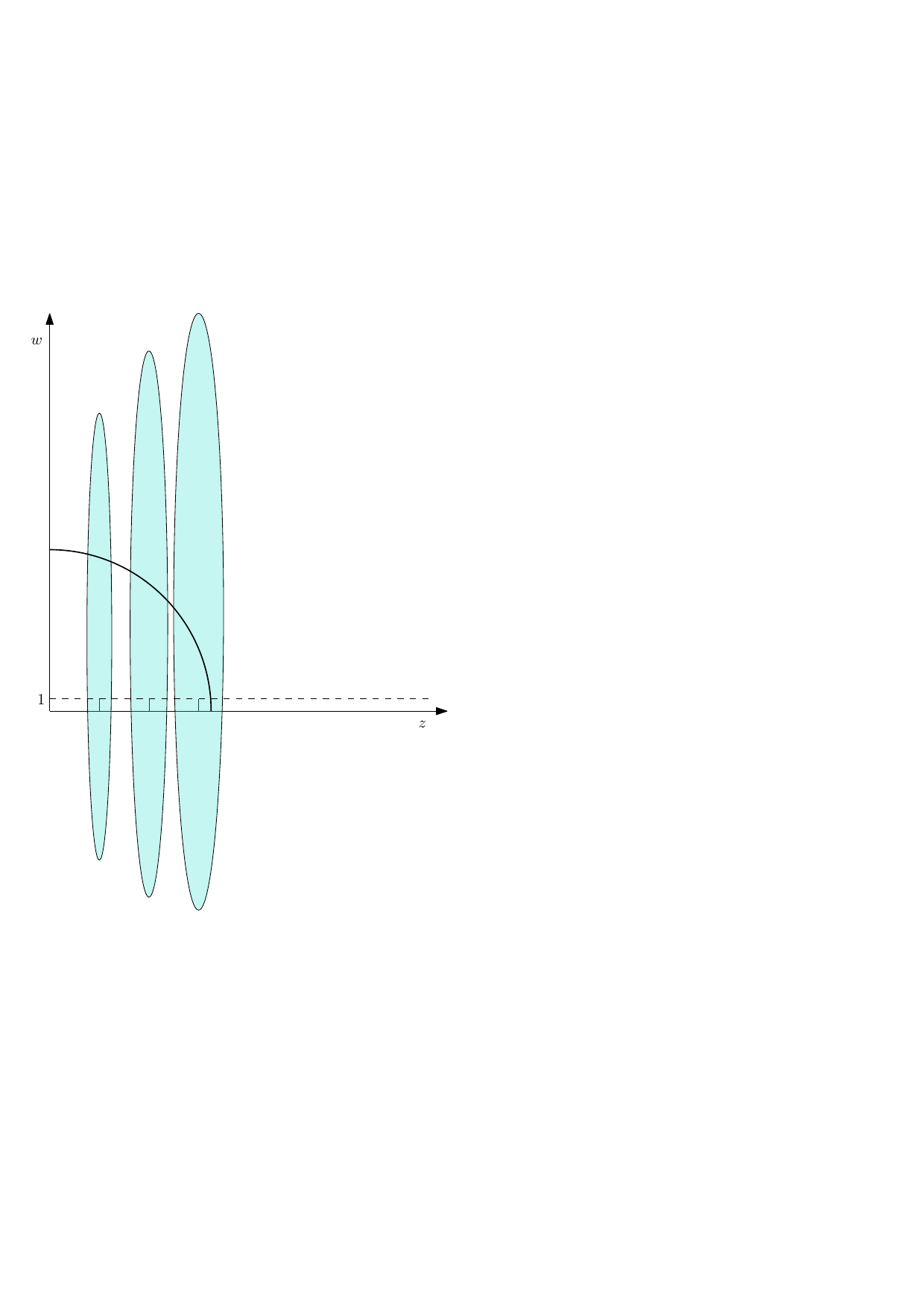}
		\caption{Components count for $\zeta(F,r,\delta)$, but not for $\zeta^0(F,r,\delta)$}
		\label{Figure_Zeros_Islands}
	\end{center}
\end{figure}

\begin{proof}[Proof of Theorem \ref{Proposition_Islands_Count}]
Since $\zeta^0(F,r,\delta)$ is decreasing in $\delta$, it is enough to prove the statement for $\delta \leq 1.$ Denote by $b_i= 2^{c_i-\frac{(i-1)i}{2c_i}}\delta^{\frac{1}{c_i}}.$ Let $i_0(l,\lambda,\delta)$ be the smallest index such that
\begin{itemize}
\item[1)] For all $i\geq i_0$, $\delta \geq \max (2^{-c_i},2^{\frac{(i-1)i}{2}-c_i^2})$;
\item[2)] $\log b_{i_0}\geq i_0$;
\item[3)] For all   $i\geq i_0$, $b_i$ is increasing.
\end{itemize}
Now,
\begin{equation}\label{Estimate_Islands}
\zeta^0(F,r,\delta) \leq \sum_{1\leq i \leq \log b_{i_0}} c_i + \textrm{I},
\end{equation}
where 
$$\textrm{I} = \text{ the number of islands with zeros from } \{ 2^i \} \times [0,1], i >\log b_{i_0}.$$
Since the first term of the right-hand side of this inequality depends only on $\delta$ and $\mathfrak{c}$, we wish to estimate $\textrm{I}$.  If $r\leq b_{i_0}$ then $2^i>r$ and $\textrm{I}=0$. Thus, we assume that $r>b_{i_0}.$ Firstly, from 2) it follows that
\begin{equation}\label{Estimate_Islands_2}
\textrm{I} \leq \text{ the number of islands with zeros from } \{ 2^i \} \times [0,1], i >i_0.
\end{equation}
Now, 3) implies that there exists a unique integer $k\geq i_0$ such that $b_k\leq r <b_{k+1}.$ Due to 1) we may apply Lemma \ref{Lemma_Peninsula} to conclude that $\{ |F|\leq \delta \}$ contains intervals $\{2^i \} \times [0,b_i]$ for all $i\geq i_0.$ This fact, combined with (\ref{Estimate_Islands_2}) implies
$$\textrm{I} \leq \text{ the number of intervals } \{ 2^i \} \times [0,b_i], i >i_0 \text{ contained in } B_r \leq k-i_0.$$
Going back to (\ref{Estimate_Islands}) we have that
\begin{equation}\label{Estimate_Islands_3}
\zeta^0(F,r,\delta) \leq \sum_{1\leq i \leq \log b_{i_0}} c_i -i_0 +k.
\end{equation}
Finally, 1) gives us that $\delta^{\frac{1}{c_k}}\geq \frac{1}{2}$ and hence $2^{c_k-\frac{(k-1)k}{2c_k}-1}\leq b_k\leq r.$ Taking logarithms we obtain that 
$$c_k\leq \log r+\frac{(k-1)k}{2c_k}+1.$$
Since $\frac{(k-1)k}{2c_k}+1$ has an upper bound which only depends on $\lambda$, taking logarithms $l$ times gives us
$$\underbrace{\log \ldots \log}_{l \text{ times}} c_k \leq \underbrace{\log \ldots \log}_{l+1 \text{ times}} r + a_{\lambda},$$
where $a_{\lambda}$ depends only on $\lambda.$ Substituting the desired value of $c_k$ in this inequality together with (\ref{Estimate_Islands_3})  finishes the proof.
\end{proof}



\section{Comparison with other results}\label{sec: CA example}

The goal of this section is to compare the results of this paper to the results of \cite{LiTaylor-TB}. More precisely, we will deduce Theorem 5.1 in \cite{LiTaylor-TB} from Theorem \ref{prop: rouche}, as well as show that Theorem \ref{thm: analytic bezout} does not follow from Theorem 5.1 in \cite{LiTaylor-TB}. We start by recalling this result. 

For an entire map $f:\C^n\to \C^n$ let $J_f$ denote the complex Jacobian matrix. Given a sequence of zeros $\xi=\{ \xi_i\}\subset f^{-1}(0)$ we define $\zeta_\xi(f,r)$ to be the number of elements of $\xi$ inside a ball $B_r.$

\begin{theorem}[Theorem 5.1 in \cite{LiTaylor-TB}]\label{thm: Li Taylor}
Let $f:\C^n\to \C^n$ be an entire map and $\xi=\{ \xi_i\}$ a sequence of zeros of $f.$ If there exist real numbers $c>0$ and $b$ such that
$$(\forall i) ~~~ |\det J_f(\xi_i)| \geq c (\mu(f,|\xi_i|))^{-b},$$
then for any $a>1$ it holds
$$\zeta_{\xi}(f,r)=O((\log \mu(f,ar))^n),$$
when $r\to \infty.$
\end{theorem}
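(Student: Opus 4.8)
The plan is to deduce Theorem \ref{thm: Li Taylor} from Theorem \ref{prop: rouche}: the point is that the hypothesis $|\det J_f(\xi_i)|\ge c\,\mu(f,|\xi_i|)^{-b}$ forces each zero $\xi_i$ to lie inside an \emph{island} of a sublevel set $\{|f|\le\delta\}$ for some $\delta$ with $\log(1/\delta)=O(\log\mu(f,ar))$, whence $\zeta_\xi(f,r)\le\tau(f,r',\delta)$ for a slightly larger radius $r'$, and the right-hand side is estimated directly by Theorem \ref{prop: rouche}. We may assume $f$ is nonconstant, so $\mu(f,\cdot)\to\infty$; we may also assume $b\ge 0$, since for $b<0$ the hypothesis already yields $|\det J_f(\xi_i)|$ bounded below by a positive constant and one runs the argument with $b=0$. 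Fix $a>1$; as the conclusion is an asymptotic statement, it suffices to prove the bound in terms of $\mu(f,a'(r+1))$ for some $a'\in(1,a)$, since $a'(r+1)\le ar$ for all large $r$.

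Next I would record the Cauchy estimates (in the spirit of Proposition \ref{lma: Cauchy harmonic}): for $z\in B_r$ one has $\|J_f(z)\|\le C_n\,\mu(f,a'r)/((a'-1)r)$ and $\|d^2f(z)\|\le C_n\,\mu(f,a'r)/((a'-1)r)^2$, so writing $M:=\mu(f,a'r)$ both quantities are $\le M$ once $r$ is large. Given a zero $\xi_i$ with $|\xi_i|\le r-1$, combining $|\det J_f(\xi_i)|\ge c\,\mu(f,|\xi_i|)^{-b}\ge c\,M^{-b}$ (here we use $b\ge 0$ and $|\xi_i|\le r$) with $\|J_f(\xi_i)\|\le M$ and the elementary bound $|\det J|\le\sigma_{\min}(J)\,\|J\|^{n-1}$ produces a \emph{uniform} lower bound $\sigma_{\min}(J_f(\xi_i))\ge c\,M^{-b-n+1}$ on the smallest singular value. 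A second-order Taylor estimate at $\xi_i$ (using $f(\xi_i)=0$ and the bound on $d^2f$) then gives $|f(z)|\ge\tfrac12\sigma_{\min}(J_f(\xi_i))\,|z-\xi_i|$ whenever $|z-\xi_i|\le\rho:=c'\,M^{-b-n}$; note $\rho<1$ for $r$ large. Hence $|f|\ge c''\,M^{-2b-2n+1}$ on the sphere $\{\,|z-\xi_i|=\rho\,\}$, and we set $\delta:=\tfrac12 c''\,M^{-2b-2n+1}$.

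The decisive step is the topological trapping. The ball $D_i:=\{\,|z-\xi_i|<\rho\,\}$ lies in the interior of $B_r$ (because $|\xi_i|\le r-1$ and $\rho<1$), and $|f|>\delta$ on its boundary; therefore the connected component $\Omega_i$ of $f^{-1}(B_\delta)\cap B_r$ containing $\xi_i$ is a connected set meeting $D_i$ but disjoint from $\partial D_i$, hence $\Omega_i\subset D_i\subset\interior B_r$, i.e. $\Omega_i$ is an island. Since the $\xi_i$ with $|\xi_i|\le r-1$ are distinct zeros of multiplicity one (their Jacobians are invertible), each lying in an island, we get $\zeta_\xi(f,r-1)\le\tau(f,r,\delta)$. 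For $r$ large, $\delta<\mu(f,a'r)/2$, so Theorem \ref{prop: rouche} applies and gives $\tau(f,r,\delta)\le C_1\big(\log(\mu(f,a'r)/\delta)\big)^n$; and $\log(\mu(f,a'r)/\delta)=(2b+2n)\log M+O(1)=O(\log\mu(f,a'r))$, which yields $\zeta_\xi(f,r-1)=O\big((\log\mu(f,a'r))^n\big)$, i.e. the claim after the reparametrization in the first paragraph.

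The main (in fact only non-routine) obstacle is the quantitative trapping carried out in the second and third paragraphs: one needs a \emph{single} $\delta$, uniform over all $\xi_i\in B_{r-1}$ and with $\log(1/\delta)=O(\log\mu(f,a'r))$, on whose sublevel set every $\xi_i$ is trapped in an island. This rests on (i) converting the determinant lower bound into a lower bound for the smallest singular value of $J_f(\xi_i)$ — for which the operator-norm upper bound coming from the Cauchy estimates is essential — and (ii) a quantitative local-injectivity estimate exhibiting a definite-size sphere around $\xi_i$ on which $|f|$ exceeds $\delta$. The comparison $\zeta_\xi\le\tau$, the final use of Theorem \ref{prop: rouche}, and the reparametrization in $a$ and $r$ are all bookkeeping.
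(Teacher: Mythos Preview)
Your argument is correct and follows the same overall strategy as the paper: trap each $\xi_i$ inside an island of a sublevel set $\{|f|\le\delta\}$ with $\log(1/\delta)=O(\log\mu(f,ar))$, deduce $\zeta_\xi(f,r-1)\le\tau(f,r,\delta)$, and invoke Theorem~\ref{prop: rouche}.

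The one genuine difference is in how the local lower bound $|f(\xi_i+z)|\gtrsim\sigma\,|z|$ on a ball of radius $\rho$ is obtained. The paper reproduces Ji's Lemma~\ref{Lemma_Ji}: it bounds $\|J_f(\xi_i)^{-1}\|_{op}$ via the adjugate and Cauchy's inequality for first derivatives, then applies the Schwarz lemma (Lemma~\ref{Schwarz}) to $g(z)=J_f(\xi_i)^{-1}f(\xi_i+z)-z$. You instead use the singular--value inequality $\sigma_{\min}(J)\ge|\det J|/\|J\|^{n-1}$ together with a Cauchy bound on the \emph{second} derivative and a direct Taylor remainder estimate. Both routes yield $\rho$ and $\delta$ that are negative powers of $M=\mu(f,a'r)$, which is all that is needed; your argument is perhaps more self--contained (no Schwarz lemma), while the paper's avoids any Hessian estimate. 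The exponents of $M$ differ slightly between the two, but this is immaterial for the $O((\log M)^n)$ conclusion.
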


Firstly, we give a proof of Theorem \ref{thm: Li Taylor} using Theorem \ref{prop: rouche}. The strategy of the proof follows \cite{Ji}. Namely,  the main results of \cite{Ji}, Theorems 1.1 and 1.2, are proven using a lemma which was referred to as ``Weak B\'{e}zout estimate", see \cite[Lemma 3.1]{Ji}. This lemma establishes an inequality
\begin{equation}\label{Ji_Coarse}
\tau(f,r,\delta) \leq C_n \left( (r+1)\frac{\mu(f,r+1)}{\delta} \right)^{2n},
\end{equation}
with $C_n$ which depends only on $n.$ The proof of (\ref{Ji_Coarse}) relies on a global version of the Chern-Levine-Nirenberg inequality, see \cite[Theorem 2.1]{Ji} and references therein.  Substituting (\ref{Ji_Coarse}) with Theorem \ref{prop: rouche} and using the same general arguments as in \cite{Ji} proves Theorem \ref{thm: Li Taylor}.  To implement this strategy, we will need the following lemma.

\begin{lemma}\label{Lemma_Ji} Let $f:\C^n\to \C^n$ be an entire map and $\xi$ a zero of $f$ such that $J_f(\xi)\neq 0.$ Then, for all $z\in \C^n$ such that
$$|z| \leq \frac{1}{2\left( n! \frac{(\mu(f,|\xi|+1))^n}{|\det J_f(\xi)|} +1 \right)}$$
it holds
$$|f(\xi+z)|\geq \frac{ |\det J_f(\xi)| \cdot |z|}{2n! (\mu(f,|\xi|+1))^{n-1}}.$$
\end{lemma}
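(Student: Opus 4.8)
The plan is to estimate $|f(\xi+z)|$ from below by writing out the first-order Taylor expansion of $f$ at $\xi$ and controlling the remainder via Cauchy estimates. Since $f(\xi)=0$, we have $f(\xi+z) = J_f(\xi)\,z + R(z)$, where $R(z)$ collects the terms of order $\geq 2$. The first step is to bound $|J_f(\xi)\,z|$ from below: for any invertible matrix $A$ one has $|Az| \geq |z|/\|A^{-1}\|$, and $\|A^{-1}\| \leq \|\adj(A)\|/|\det A|$, with each entry of $\adj(J_f(\xi))$ being an $(n-1)\times(n-1)$ minor of $J_f(\xi)$. The entries of $J_f(\xi)$ are first partial derivatives of the components of $f$, so by the Cauchy estimate on the ball of radius $1$ around $\xi$ they are bounded by $\mu(f,|\xi|+1)$ (up to harmless constants); hence each minor is at most $(n-1)!\,(\mu(f,|\xi|+1))^{n-1}$, giving $|J_f(\xi)\,z| \geq \dfrac{|\det J_f(\xi)|\,|z|}{c_n (\mu(f,|\xi|+1))^{n-1}}$ for an appropriate dimensional constant. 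The second step is to bound the remainder $|R(z)|$ from above. Again by Cauchy estimates on the unit ball around $\xi$, the Taylor coefficients of $f$ at $\xi$ of order $j$ are bounded by $\mu(f,|\xi|+1)$, so for $|z|\leq 1/2$ the tail $|R(z)| = |f(\xi+z) - J_f(\xi)z|$ is $O(\mu(f,|\xi|+1)\,|z|^2)$, with an explicit geometric-series constant.

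The third step is to combine the two: $|f(\xi+z)| \geq |J_f(\xi)z| - |R(z)| \geq \dfrac{|\det J_f(\xi)|\,|z|}{c_n (\mu(f,|\xi|+1))^{n-1}} - c'_n\,\mu(f,|\xi|+1)\,|z|^2$. One then checks that the stated bound on $|z|$, namely $|z| \leq \dfrac{1}{2\left(n!\,\frac{(\mu(f,|\xi|+1))^n}{|\det J_f(\xi)|}+1\right)}$, is exactly what is needed to make the quadratic term at most half of the linear term, so that the difference is $\geq \dfrac{|\det J_f(\xi)|\,|z|}{2n!\,(\mu(f,|\xi|+1))^{n-1}}$. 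This requires book-keeping so that the constants appearing in the Cauchy estimates for the minors and for the remainder both fit under the single combinatorial factor $n!$; this is the main technical nuisance, though not a conceptual obstacle — one may have to be slightly generous and absorb the dimensional constants into $n!$ (which is legitimate since $n!$ grows much faster than any fixed polynomial in $n$ times $(n-1)!$, for $n\geq 1$, and the $+1$ in the denominator gives further slack).

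The one genuine point requiring care is that the Cauchy estimates I invoke are for the Taylor expansion of $f$ centered at $\xi$, not at the origin, on the ball of radius $1$ about $\xi$; since that ball is contained in $B_{|\xi|+1}$, the bound $\mu(f,|\xi|+1)$ on the sup-norm of $f$ there is valid, and Proposition \ref{lma: Cauchy harmonic} (or the classical one-variable Cauchy estimates applied along complex lines, exactly as in the proof of that proposition) yields the coefficient bounds. I expect the main obstacle to be purely organizational: tracking the constants through the minor expansion of $\adj(J_f(\xi))$ and the remainder tail so that the final inequality comes out with the clean constants $2n!$ and $(\mu(f,|\xi|+1))^{n-1}$ stated in the lemma, rather than something merely of the same order.
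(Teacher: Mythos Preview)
Your approach is sound and would prove the lemma up to constants; the first step (bounding $\|J_f(\xi)^{-1}\|_{op}$ via the adjugate formula and Cauchy's inequality for the entries $|\partial_i f_j(\xi)|\leq \mu(f,|\xi|+1)$) is identical to the paper's. The difference is in how the remainder is handled, and this is exactly what determines whether the stated constants come out cleanly.

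Rather than bounding $|R(z)|$ directly and subtracting from $|J_f(\xi)z|$, the paper first normalizes: set $g(z)=J_f(\xi)^{-1}f(\xi+z)$, so that $g(0)=0$ and $J_g(0)=\id$. Then $g(z)-z$ has both value and Jacobian vanishing at $0$, and a Schwarz-type lemma (one-variable Schwarz along complex lines, recorded as Lemma~\ref{Schwarz}) gives $|g(z)-z|\leq \tfrac12|z|$ whenever $|z|\leq \tfrac{1}{2\mu(g-\id,1)}$, hence $|g(z)|\geq \tfrac12|z|$. Translating back via $|f(\xi+z)|\geq |g(z)|/\|J_f(\xi)^{-1}\|_{op}$ and the estimate $\mu(g,1)\leq \|J_f(\xi)^{-1}\|_{op}\,\mu(f,|\xi|+1)$ yields the lemma with exactly the stated constants: the constraint on $|z|$ becomes $|z|\leq \tfrac{1}{2(\mu(g,1)+1)}$, which is precisely the displayed hypothesis.

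Your direct route, by contrast, produces $|R(z)|\leq \mu(R,1)\,|z|^2$ with $\mu(R,1)\leq \mu(f,|\xi|+1)+\|J_f(\xi)\|_{op}\leq (n+1)\mu(f,|\xi|+1)$, leading to a constraint of the form $|z|\leq \dfrac{|\det J_f(\xi)|}{2(n+1)!\,(\mu(f,|\xi|+1))^n}$, strictly smaller than the hypothesis in the lemma. So you would prove a slightly weaker statement. The normalization trick avoids this loss because it never needs $\|J_f(\xi)\|_{op}$ separately. For the application in Section~\ref{sec: CA example} your version would suffice, since only the orders of magnitude matter there; but it does not prove the lemma exactly as stated, and the ``absorb into $n!$'' remark does not actually close this gap.
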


The proof of Lemma \ref{Lemma_Ji} can be extracted from the proof of Theorem 1.1 in \cite{Ji}. We present it here for the sake of completeness. We will use the following auxiliary statement, which is a direct consequence of Schwarz lemma, see \cite[Lemma 3.4]{Ji} for details.

\begin{lemma}\label{Schwarz}
Let $f:\C^n\to \C^n$ be an entire map such that $f(0)=0, J_f(0)=0.$ Then for all $z$ such that $|z|\leq \frac{1}{2\mu(f,1)}$ it holds $|f(z)|\leq \frac{1}{2}|z|.$
\end{lemma}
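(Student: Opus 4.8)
\textbf{Proof proposal for Lemma \ref{Schwarz}.}

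The plan is to reduce the statement to the one-dimensional Schwarz lemma by restricting $f$ to complex lines through the origin. First I would fix an arbitrary point $z \in \C^n$ with $0 < |z| \leq \frac{1}{2\mu(f,1)}$ and an arbitrary unit vector $v' \in S^{2n-1} \subset \C^n$; it suffices to bound $|\langle f(z), v'\rangle|$ uniformly in $v'$. Writing $z = u w$ with $w = z/|z| \in S^{2n-1}$ and $u \in \C$, $|u| = |z|$, consider the holomorphic function $\varphi \co \C \to \C$, $\varphi(u) = \langle f(uw), v'\rangle$. Because $f(0) = 0$ and $J_f(0) = 0$, the function $\varphi$ satisfies $\varphi(0) = 0$ and $\varphi'(0) = \langle J_f(0) w, v'\rangle = 0$, so $\varphi$ vanishes to order at least $2$ at the origin and $\psi(u) := \varphi(u)/u^2$ extends holomorphically to all of $\C$.

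Next I would apply the maximum principle on the closed disc $|u| \leq 1$: since $|\varphi(u)| \leq |f(uw)| \leq \mu(f,1)$ for $|u| = 1$, we get $|\psi(u)| \leq \mu(f,1)$ for $|u| = 1$, and hence $|\psi(u)| \leq \mu(f,1)$ for all $|u| \leq 1$ as well. Consequently $|\varphi(u)| \leq \mu(f,1)\,|u|^2$ for $|u| \leq 1$. Evaluating at $u$ with $|u| = |z| \leq \frac{1}{2\mu(f,1)} \leq \frac{1}{2}$ (note $|z| \leq 1$, so we are in the valid range), this yields
\[
|\langle f(z), v'\rangle| = |\varphi(u)| \leq \mu(f,1)\,|z|^2 \leq \mu(f,1) \cdot \frac{1}{2\mu(f,1)} \cdot |z| = \tfrac{1}{2}|z|.
\]
Taking the supremum over $v' \in S^{2n-1}$ gives $|f(z)| \leq \tfrac{1}{2}|z|$, as claimed. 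The case $z = 0$ is trivial since $f(0) = 0$.

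There is essentially no serious obstacle here; the only mild points to be careful about are (i) justifying that $\psi$ is entire, which follows from $\varphi$ having a double zero at the origin because both $f$ and its differential vanish there, and (ii) confirming that the hypothesis $|z| \leq \frac{1}{2\mu(f,1)}$ places $z$ inside the unit ball, so the bound $\mu(f,1)$ on the boundary circle is the relevant one — this holds as soon as $\mu(f,1) \geq \tfrac{1}{2}$, which we may assume (otherwise $f$ is so small on $B_1$ that, being entire with $f(0)=0$, a direct Cauchy-estimate argument gives the conclusion immediately, or one simply replaces $\mu(f,1)$ by $\max(\mu(f,1), 1)$ throughout, which only weakens the hypothesis). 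If one prefers to avoid the maximum-principle step on $\psi$, the same bound follows directly from the Schwarz-type estimate for $\varphi$ quoted as \cite[Lemma 3.4]{Ji}.
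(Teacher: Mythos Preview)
Your argument is correct and is exactly the deduction the paper has in mind: the lemma is stated there without proof as ``a direct consequence of Schwarz lemma, see \cite[Lemma 3.4]{Ji}'', and your restriction to complex lines through the origin together with the order-two Schwarz bound $|\varphi(u)| \leq \mu(f,1)\,|u|^2$ is precisely that deduction.

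One small correction to your closing caveat. Your concern about the case $\mu(f,1) < \tfrac{1}{2}$ is well placed, but the first remedy you offer (``a direct Cauchy-estimate argument gives the conclusion immediately'') cannot work, because the lemma as literally stated is \emph{false} in that regime: for $n=1$ and $f(z) = \eps z^3$ with $0<\eps<\tfrac{1}{2}$ one has $\mu(f,1)=\eps$, yet at $z = \tfrac{1}{2\eps}$ we get $|f(z)| = \tfrac{1}{8\eps^2} > \tfrac{1}{4\eps} = \tfrac{1}{2}|z|$. Your second remedy, replacing $\mu(f,1)$ by $\max(\mu(f,1),1)$, is the correct fix. In the paper's only use of the lemma the restriction $|z| \leq \frac{1}{2(\mu(g,1)+1)} \leq \tfrac{1}{2}$ is imposed anyway, so the edge case is irrelevant there.
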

\begin{proof}[Proof of Lemma \ref{Lemma_Ji}]
We start by proving the following auxiliary inequality
\begin{equation}\label{Inequality_Inverse}
\| J_f(\xi)^{-1} \|_{op} \leq n! \frac{(\mu(f,|\xi|+1))^{n-1}}{|\det J_f(\xi)|},
\end{equation}
where $\| \cdot \|_{op}$ denotes the operator norm. Recall that
$$\| J_f(\xi)^{-1}\|_{op}=\frac{1}{|\det J_f(\xi) |} \| \adj(J_f(\xi))\|_{op},$$
and thus we need to prove that 
$$\| \adj(J_f(\xi))\|_{op} \leq n! (\mu(f,|\xi|+1))^{n-1}.$$
By Cauchy-Schwarz inequality
$$\| \adj(J_f(\xi))\|_{op} \leq n \cdot \max_{i,j} |\adj(J_f(\xi))_{i,j}|,$$
and we are left to prove that 
$$\max_{i,j} |\adj(J_f(\xi))_{i,j}|\leq (n-1)! (\mu(f,|\xi|+1))^{n-1}.$$
For each $i$ and $j$, $\adj(J_f(\xi))_{i,j}$ is a sum of $(n-1)!$ terms, each of which is a product of $n-1$ partial derivatives of $f.$ Thus
$$\max_{i,j} |\adj(J_f(\xi))_{i,j}| \leq (n-1)! \cdot (\max_i |\partial_if(\xi)|)^{n-1}.$$
Finally, Cauchy's inequality yields that $\max_i |\partial_if(\xi)|\leq \mu (f,|\xi|+1)$ which completes the proof of (\ref{Inequality_Inverse}).

Now, let $g:\C^n \to \C^n$ be an entire map given by $g(z)=(J_f(\xi))^{-1}f(\xi+z).$ Since $g(0)=0$ and $J_g(0)=\id_{\C^n}$ we may apply Lemma \ref{Schwarz} to the map $g(z)-z$, which gives us
\begin{equation}\label{Schwarz_Triangle_1}
|g(z)-z|\leq \frac{1}{2}|z|,
\end{equation}
for all $z$, such that $|z|\leq \frac{1}{2\mu(g(z)-z, 1)}$. Moreover, $\mu(g(z)-z,1)\leq \mu(g,1)+1$ implies that (\ref{Schwarz_Triangle_1}) holds for all $z$ with $|z| \leq \frac{1}{2(\mu(g,1)+1)}$ and triangle inequality further implies that
\begin{equation}\label{Schwarz_Triangle_2}
|g(z)| \geq \frac{1}{2}|z|
\end{equation}
as long as $|z|\leq \frac{1}{2(\mu(g,1)+1)}.$ From the definition of $g$ and (\ref{Schwarz_Triangle_2}) it follows that
\begin{equation}\label{Schwarz_Triangle_3}
|f(\xi+z)| \cdot \| (J_f(\xi))^{-1} \|_{op} \geq \frac{1}{2}|z|,
\end{equation}
for all $z$ such that $|z|\leq \frac{1}{2(\mu(g,1)+1)}.$ Applying (\ref{Inequality_Inverse}) to (\ref{Schwarz_Triangle_3}) yields
\begin{equation}\label{Schwarz_Triangle_4}
 |f(\xi+z)| \cdot n! \frac{(\mu(f,|\xi|+1))^{n-1}}{|\det J_f(\xi)|} \geq \frac{1}{2}|z|,
\end{equation}
for all $z$ such that $|z|\leq \frac{1}{2(\mu(g,1)+1)}.$
Using (\ref{Inequality_Inverse}) again gives us
$$|g(z)|\leq |f(\xi+z)| \cdot \| (J_f(\xi))^{-1} \|_{op} \leq  |f(\xi+z)| \cdot n! \frac{(\mu(f,|\xi|+1))^{n-1}}{|\det J_f(\xi)|}, $$
and thus 
$$\mu(g,1)\leq n! \frac{(\mu(f,|\xi|+1))^n}{|\det J_f(\xi)|}.$$
Combining the last inequality and (\ref{Schwarz_Triangle_4}) finishes the proof.
\end{proof}

\begin{proof}[Proof of Theorem \ref{thm: Li Taylor}]
For simplicity we assume that $b\geq 0$ and $f(0)\neq 0.$ The general case readily reduces to this one. 

Let $r\geq 0$ and $\xi_i \in B_r.$ By the assumption
$$|\det J_f(\xi_i)|\geq c(\mu(f,|\xi_i|))^{-b} \geq c (\mu(f,r))^{-b} \geq c (\mu (f,r+1))^{-b},$$
and since $\mu(f,|\xi_i|+1)\leq \mu(f,r+1)$, Lemma \ref{Lemma_Ji} gives us that
\begin{equation}\label{Expression_For_Delta}
|f(\xi_i + z)|\geq \delta := \frac{c|z|}{2 n! (\mu(f,r+1))^{n-1+b}},
\end{equation}
for all $z$ such that
$$|z|\leq \varepsilon := \frac{1}{2( c^{-1} \cdot n! (\mu(f,r+1))^{n+b} +1)}.$$
Since $\varepsilon<\frac{1}{2}<1$ a connected component of $\{ |f|\leq \delta \}$ which contains $\xi_i$ is itself fully contained in $B_{r+1}.$ The same holds for each $\xi_j\in B_r$ and thus
$$\zeta_{\xi}(f,r)\leq \tau(f,r+1,\delta).$$
Finally,  by (\ref{Expression_For_Delta}), $\delta/2 \leq \mu(f,a(r+1))/2$ for all $a>1$ and we may apply Theorem \ref{prop: rouche} which yields
$$\zeta_{\xi}(f,r)\leq \tau(f,r+1,\delta) \leq C_{n,a} \left( \log \frac{2\cdot \mu(f,a(r+1))}{\delta} \right)^n.$$
Substituting the expression for $\delta$ into this inequality and using $\mu(f,r+1)\leq \mu (f,a(r+1))$ gives us
$$\zeta_{\xi}(f,r)\leq C_{n,a} \left( \log \frac{4n!\cdot (\mu(f,a(r+1)))^{n+b}}{c|z|} \right)^n$$
for all $z$ such that $|z|\leq \varepsilon.$ Taking $|z|=\varepsilon$ yields
$$\zeta_{\xi}(f,r) \leq C_{n,a} \left( A_{n,b} \log \mu(f,  a(r+1)) + B_{n,c} \right)^n,$$
where $A,B,C$ depend on $n,a,b,c$ as indicated. This completes the proof.
\end{proof}

So far we proved that Theorem \ref{thm: Li Taylor} follows from Theorem \ref{prop: rouche}.  Now, we wish to show that Theorem \ref{thm: analytic bezout} does not follow from Theorem \ref{thm: Li Taylor}. Namely, one may imagine the following scenario. Let $f:\C^n\to \C^n$ be an entire map. While the classical count of zeros of $f$ might not satisfy the transcendental B\'ezout bound, it may still happen that for a fixed $\delta>0$, we can choose a sequence of zeros $\{\xi_i\}$ from $f^{-1}(B_\delta)$ such that Theorem \ref{thm: Li Taylor} applies to this sequence and $\zeta(f,r,\delta)=O(\zeta_\xi (f,r)).$ In this case, Theorem \ref{thm: Li Taylor} would imply Theorem \ref{thm: analytic bezout} (at least up to a constant which depends on $\delta$). However,  our next result rules out this possibility.

\begin{prop}\label{prop: CS-Jacobian}
Let  $\mathfrak{c}=\{c_i \}$ be an increasing sequence of positive integers such that $\lim_{i\to +\infty} \frac{i}{c_i}=0$ and $F$ the corresponding Cornalba-Shiffman map. For all real numbers $c>0$ and $b$ the inequality
$$|\det J_f(\xi)| \geq c (\mu(f,|\xi|))^{-b},$$
holds for at most finitely many zeros $\xi\in F^{-1}(0).$
\end{prop}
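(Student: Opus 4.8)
The plan is to compute the Jacobian determinant of the Cornalba–Shiffman map $F = (g(z), f(z,w))$ at its zeros and show that it decays much faster than any power of the maximum modulus. Since $F^{-1}(0) = \{(2^i, 1/j) : i \geq 1, 1 \leq j \leq c_i\}$, and $F$ has the triangular form $F(z,w) = (g(z), f(z,w))$, the Jacobian matrix is lower triangular with respect to the $(z,w)$ splitting, so
\[
\det J_F(z,w) = g'(z) \cdot \frac{\partial f}{\partial w}(z,w).
\]
First I would evaluate each factor at a zero $\xi = (2^i, 1/j)$. For the first factor, $g'(2^i) = -2^{-i} g_i(2^i)$ (only the $i$-th term of the product vanishes at $2^i$), and from the estimates already used in the proof of Lemma \ref{Lemma_Peninsula} one has $|g_i(2^i)| = \prod_{l \ne i} |1 - 2^{i-l}|$, which is comparable to $2^{i(i-1)/2}$ up to a bounded multiplicative constant; so $|g'(2^i)|$ is of size roughly $2^{i(i-1)/2 - i} = 2^{i(i-3)/2}$, in any case bounded below by a constant times $2^{i^2/2 - O(i)}$ and above similarly. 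For the second factor, $\frac{\partial f}{\partial w}(2^i, w) = \sum_{l} 2^{-c_l^2} g_l(2^i) P_{c_l}'(w)$; evaluating at $z = 2^i$ kills all terms except $l = i$ in the relevant sense only after noting that the other terms carry the tiny weight $2^{-c_l^2}$ — so I would carefully isolate the $l = i$ term, $2^{-c_i^2} g_i(2^i) P_{c_i}'(1/j)$, and bound the tail $\sum_{l \ne i} 2^{-c_l^2} g_l(2^i) P_{c_l}'(1/j)$ using $|g_l(2^i)| \le C \cdot 2^{\lambda(i)}$ for a modest function of $i$ (polynomial in $2^i$) and the super-exponential decay $2^{-c_l^2}$ together with $c_l \ge l$. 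Here $P_{c_i}'(1/j) = \prod_{l \ne j, 1 \le l \le c_i}(1/j - 1/l)$; the key point is that this product, while it may be small, is bounded below in absolute value by something like $(c_i!)^{-2}$ or a similar explicit expression, which is $\exp(-O(c_i \log c_i))$.

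The upshot of these computations is a two-sided bound of the shape
\[
\log |\det J_F(\xi)| \;\le\; \tfrac{i^2}{2} \;-\; c_i^2 \;+\; O(c_i \log c_i)\,,
\qquad \xi = (2^i, 1/j),
\]
and since $c_i \geq i$ the dominant term on the right is $-c_i^2$, so $\log|\det J_F(\xi)| \le -\tfrac12 c_i^2$ for $i$ large. On the other hand, by Proposition \ref{Estimate_of_Max}, $\log \mu(F, |\xi|) = \Theta((\log|\xi|)^2) = \Theta(i^2)$ since $|\xi| = \sqrt{2^{2i} + 1/j^2}$ is comparable to $2^i$. Therefore for any fixed $b$,
\[
\log\!\big(c\,\mu(F,|\xi|)^{-b}\big) \;=\; -b\,\Theta(i^2) + O(1)\,,
\]
which grows like $-O(i^2)$, whereas $\log|\det J_F(\xi)| \le -\tfrac12 c_i^2$. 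The hypothesis $\lim_{i\to\infty} i/c_i = 0$ forces $c_i^2 / i^2 \to \infty$, so $-\tfrac12 c_i^2$ eventually dominates (is more negative than) $-b\,\Theta(i^2) + O(1)$ for every fixed $b$ and $c$; hence the inequality $|\det J_F(\xi)| \ge c\,\mu(F,|\xi|)^{-b}$ can hold for at most finitely many zeros $\xi$.

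The main obstacle I anticipate is the bookkeeping in the lower bound for $|\partial f/\partial w(2^i, 1/j)|$: one must show that the designated term $2^{-c_i^2} g_i(2^i) P_{c_i}'(1/j)$ genuinely dominates the remainder of the series and that $|P_{c_i}'(1/j)|$ is not too small — i.e., that its logarithm is $-O(c_i \log c_i)$ uniformly in $j \in \{1,\dots,c_i\}$, so that it cannot cancel the gap between $c_i^2$ and $i^2$. This is where the quantitative form of the assumption $i/c_i \to 0$ is essential: it provides exactly the room needed so that the $-c_i^2$ term wins against everything of size $O(i^2)$ and $O(c_i \log c_i)$. Once these estimates are in place, the conclusion is immediate from Proposition \ref{Estimate_of_Max} and a comparison of growth rates.
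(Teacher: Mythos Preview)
Your approach is essentially the paper's, but you are making it harder than necessary in two places, and one of your stated concerns is actually a misdirection.

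First, you missed a key simplification: for $l \neq i$ the function $g_l(z) = g(z)/(1 - z/2^l)$ still contains the factor $(1 - z/2^i)$, so $g_l(2^i) = 0$. Hence
\[
\partial_w f(2^i, w) \;=\; \sum_{l} 2^{-c_l^2} g_l(2^i)\, P_{c_l}'(w)
\;=\; 2^{-c_i^2}\, g_i(2^i)\, P_{c_i}'(w)
\]
exactly --- there is no tail to control, and no need to invoke the super-exponential decay of $2^{-c_l^2}$.

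Second, and more importantly, you only need an \emph{upper} bound on $|\det J_F(\xi)|$: the proposition asks you to show that $|\det J_F(\xi)|$ is eventually \emph{smaller} than $c\,\mu(F,|\xi|)^{-b}$. Your ``main obstacle'' --- a lower bound for $|\partial_w f(2^i,1/j)|$ and for $|P_{c_i}'(1/j)|$ --- is therefore irrelevant. In fact you want $|P_{c_i}'(1/j)|$ small, and the trivial estimate
\[
\bigl|P_{c_i}'(1/j)\bigr| \;=\; \prod_{\substack{1\le l\le c_i\\ l\ne j}} \Bigl|\tfrac{1}{j} - \tfrac{1}{l}\Bigr| \;\le\; 1
\]
already suffices. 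Combining this with $|g_i(2^i)| < 2^{i(i-1)/2}$ gives the clean bound
\[
|\det J_F(\xi_{i,j})| \;\le\; 2^{-c_i^2 - i}\,|g_i(2^i)|^2 \;<\; 2^{-c_i^2 + i^2 - 2i},
\]
with no $O(c_i\log c_i)$ correction needed. The remainder of your argument --- comparing this with $\log \mu(F,|\xi|) = \Theta(i^2)$ from Proposition~\ref{Estimate_of_Max} and invoking $i/c_i \to 0$ so that $c_i^2$ eventually beats any fixed multiple of $i^2$ --- matches the paper exactly.
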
 

\begin{proof}
Since the zeros of Cornalba-Shiffman maps are isolated and $\mu(F,r)\to + \infty$ as $r\to +\infty$, it is enough to prove the proposition for $b>0.$ Since $F(z,w)=(g(z),f(z,w))$, we have that
$$\det J_F=\partial_z g\partial_w f-\partial_w g\partial_z f=\partial_z g\partial_w f.$$
All relevant infinite sums and products converge uniformly on compact sets and thus we may compute
$$\partial_z g = \sum_{l=1}^\infty -2^{-l}g_l,$$
as well as
$$\partial_w f = \sum_{l=1}^\infty 2^{-c_l^2}g_l\partial_w P_{c_l}.$$
We wish to evaluate $J_F$ at zeros of $F.$ To this end, let us denote the zeros of $F$ by $\xi_{i,j}=(2^i,1/j)$ for $i\geq 1$ and $1\leq j \leq c_i.$ We calculate
$$\partial_z g (2^i) = -2^{-i}g_i(2^i),$$
as well as
$$\partial_w f (\xi_{i,j}) = 2^{-c_i^2}g_i(2^i)\partial_w P_{c_i}(1/j)=2^{-c_i^2}g_i(2^i) \prod_{1\leq l\leq c_i, l\neq j} \left( \frac{1}{j}- \frac{1}{l} \right).$$
Since $\Big\vert \prod_{1\leq l\leq c_i, l\neq j} \left( \frac{1}{j}- \frac{1}{l} \right)\Big\vert \leq 1,$ we get that for all $i$ and $j$
\begin{equation}\label{Jacobian_Estimate}
|\det J_F(\xi_{i,j})| \leq 2^{-c_i^2-i}(g_i(2^i))^2.
\end{equation}
Moreover,
$$g(2^i)= \prod_{l=1}^{i-1} (1-2^{i-l}) \cdot  \prod_{l=i+1}^{\infty} (1-2^{i-l})= \prod_{l=1}^{i-1} (1-2^l) \cdot  \prod_{l=1}^{\infty} (1-2^{-l}), $$
and thus we have that
$$|g_i(2^i)|< \prod_{l=1}^{i-1} (2^l-1)<\prod_{l=1}^{i-1} 2^l=2^\frac{i(i-1)}{2}.$$
Combining this inequality with (\ref{Jacobian_Estimate}) yields
\begin{equation}\label{c_i_Jacobian}
|\det J_F(\xi_{i,j})|  < 2^{-c_i^2+i^2-2i},
\end{equation}
for all $i$ and $j.$ By the assumption on $\mathfrak{c}$, we have that for any $b>0$, $c_i^2\geq 5b(i+1)^2+i^2$ for all but finitely many $i.$ Thus, for all but finitely many $i$, it holds
\begin{equation}\label{Jacobian_Auxiliary_1}
|\det J_F(\xi_{i,j})|  < 2^{-5b(i+1)^2-2i}
\end{equation}
for all $j.$ Since $|\xi_{i,j}|=|(2^i,1/j)|<2^{i+1}$, we have that
\begin{equation}\label{Jacobian_Auxiliary_2}2^{-5b(i+1)^2}<2^{-5b(\log |\xi_{i,j}|)^2}<\left( 2^{\frac{3}{2}(\log |\xi_{i,j}|)^2+\frac{7}{2} \log |\xi_{i,j}|} \right)^{-b}. 
\end{equation}
Now, by Proposition \ref{Estimate_of_Max}
\begin{equation}\label{Jacobian_Auxiliary_3}
\left( 2^{\frac{3}{2}(\log |\xi_{i,j}|)^2+\frac{7}{2} \log |\xi_{i,j}|} \right)^{-b} \leq 2^{bC} (\mu(F,|\xi_{i,j}|))^{-b},
\end{equation}
where $C>0$ is an absolute constant. Putting (\ref{Jacobian_Auxiliary_1}), (\ref{Jacobian_Auxiliary_2}) and (\ref{Jacobian_Auxiliary_3}) together gives us
$$|\det J_F(\xi_{i,j})|  <2^{-2i+bC} (\mu(F,|\xi_{i,j}|))^{-b}$$
for all but finitely many indices $i,j.$ Since for every $a>0,2^{-2i+bC}<a$ for all but finitely many $i$, the proof follows.
\end{proof}

\begin{rmk}
Different results in the spirit of Theorem \ref{thm: Li Taylor} were obtained in \cite{Ji} and \cite{Li-TB}. In one way or another, these results rely on lower bounds on $\det J_f$ at zeros of $f.$ Namely, in \cite{Ji}, $|\det J_f(\xi_i)|$ is assumed to be bounded from below by a constant, while in \cite{Li-TB} the upper bound for $\zeta_\xi(f,r)$ involves terms of the form $\log \frac{1}{\det J_f(\xi_i)}.$ From (\ref{c_i_Jacobian}), it follows that by taking $\{c_i\}$ which increases sufficiently fast, we can make $|\det J_F(\xi_{i,j})|$ of Cornalba-Shiffman maps decrease arbitrarily fast. Since, by Theorem \ref{Thm:CS_Coarse_Count}, $\zeta(F,r,\delta)$ increases as $\log r$ independently of $\{c_i\}$, we conclude that Theorem \ref{thm: analytic bezout} can not be deduced from the results of \cite{Ji} and \cite{Li-TB} using the above-described strategy. 
\end{rmk}

\begin{rmk}
It is interesting to notice that Proposition \ref{prop: CS-Jacobian} does not rule out a possibility that Theorem \ref{prop: rouche}, or at least the same bound for $\zeta^0$, can be deduced from Theorem \ref{thm: Li Taylor}. Indeed, we have not proven any lower bound on the count of islands of Cornalba-Shiffman maps. As a matter of fact, it is not even clear if for each $\delta>0$ and each sequence $\{c_i\}$, $\zeta^0(F,r,\delta)\to +\infty$ as $r\to +\infty.$ Namely, it may happen that starting from certain finite $r_0$, all connected components of $\{ |F|\leq \delta \}$, which contain zeros of $F$,  elongate all the way to infinity in the $w$-direction and thus never become islands, but rather remain peninsulas for all $r>r_0.$
\begin{question}
Is it true that for each $\delta>0$ and all sequences $\mathfrak{c}$, $\zeta^0(F,r,\delta)\to +\infty$ as $r \to +\infty$? If so, what is the possible growth rate of $\zeta^0(F,r,\delta)$ depending on parameters $\delta$ and $\mathfrak{c}$?
\end{question}
\end{rmk}

\begin{rmk}
The main technical ingredient in \cite{LiTaylor-TB} and \cite{Li-TB} is Theorem 3.6 from \cite{LiTaylor-TB} (slightly modified in \cite{Li-TB}). While the proof of this result has certain similarities with the proof of Theorem \ref{prop: rouche}, it seems that the two approaches are fundamentally different. Namely, approximation of a holomorphic map by a polynomial is the key idea in the proof of Theorem \ref{prop: rouche}, while it is not directly used in the proof of Theorem 3.6 in \cite{LiTaylor-TB}. It would be interesting to explore how the methods of \cite{LiTaylor-TB} and \cite{Li-TB} relate to the coarse counts of zeros. In the opposite direction, it would be interesting to deduce the results of \cite{Li-TB} using the same strategy as above, by proving a suitable analogue of Theorem \ref{prop: rouche}.
\end{rmk}


\section{Coarse counts and persistent homology}\label{sec: persistence}

In this section we discuss results generalizing Theorems \ref{thm: analytic bezout} and \ref{thm: analytic bezout higher homol}, formulated in terms of persistent homology and barcodes.

\subsection{Persistence modules and barcodes}\label{subsec:intro bars}

Recall that for a Morse function $f:M \to \R$ on a compact manifold and a coefficient field $\bb  K$, its {\em barcode} in degree $\m \in \Z$ is a finite multi-set $\cl B_\m(f;\bb K)$ of intervals with multiplicities $(I_j,m_j),$ where $m_j \in \N$ and $I_j $ is finite, that is of the form $[a_j,b_j)$ or infinite, that is of the form $[c_j,\infty).$ The number of infinite bars is equal to the Betti number $b_\m(M;\bb K) = \dim H_\m(M;\bb K).$

This barcode is obtained algebraically from the {\em persistence module} $V_\m(f;\bb K)$ consisting of vector spaces $V_\m(f;\bb K)^t = H_\m(\{f \leq t\}; \bb K)$ parametrized by $t\in \R$ and connecting maps $\pi^{s,t}: V_\m(f;\bb K)^s \to V_\m(f;\bb K)^t$ induced by the inclusions $\{f \leq s \} \hookrightarrow \{f \leq t \}$ for $s\leq t.$ These maps satisfy the structure relations of a persistence module: $\pi^{s,s} =\id_{V_\m(f;\bb K)^s}$ for all $s$ and $\pi^{s_2,s_3} \circ \pi^{s_1,s_2} = \pi^{s_1,s_3}$ for all $s_1 \leq s_2 \leq s_3.$ The total barcode of $f$ is set to be \[\cl B(f;\bb K) = \sqcup_{\m \in \Z} \cl B_\m(f;\bb K)\] where $\sqcup$ stands for the sum operation on multisets. This is the barcode of the persistence module \[V(f;\bb K) = \oplus_{\m \in \Z} V_\m(f;\bb K).\] 

On a compact manifold $M$ with boundary $\partial M$ and a Morse function $f: M \to \R$ in the sense of manifolds with boundary, we may define the persistence module and barcode of $f$ as above. 


One simple property of the barcode of this persistence module is that 
the number of bars in the barcode coincides with the number of their starting points. Another property is that the number of bars containing a given interval $[a,b]$ is $\dim \mrm{Im}(\pi^{a,b}).$


Recall that the length of a finite bar $[a,b)$ is $b-a$ and the length of an infinite bar $[c,\infty)$ is $+\infty.$ We require the following notion: for $\delta \geq 0,$ let $\cl N_{\delta}(f;\bb K)$ denote the number of bars of length $>\delta$ in the barcode $\cl B(f;\bb K).$ Similarly, $\cl N_{\m,\delta}(f;\bb K)$ is the number of bars of length $>\delta$ in the barcode $\cl B_\m(f;\bb K)$ and $\cl N_{\delta}(f;\bb K) = \sum_\m \cl N_{\m,\delta}(f;\bb K).$ The definitions of $\cl N_{\m,\delta}(f;\bb K)$ and  $\cl N_{\delta}(f;\bb K)$ extend to any continuous function $f$ on a compact manifold with boundary, as explained in detail in \cite[Section 2.2]{BPPPSS}.


We refer to \cite{PRSZ} for a systematic introduction to persistence modules with a view towards applications in topology and analysis. The only result which we require here is the following direct consequence of the algebraic isometry theorem \cite{Bauer-Lesnick} (see \cite[Theorem 2.2.8, Equation (6.4)]{PRSZ}).

\begin{theorem}\label{thm: stab}
Let $f,g: M \to \R$ be two functions on a compact manifold $M$ with boundary such that $d_{C^0}(f,g) \leq c-\eps/2$ for $c > \eps/2 > 0.$ Then for all $\m \in \Z,$ $\cl N_{\m,2c}(f) \leq \cl N_{\m,\eps}(g).$
\end{theorem}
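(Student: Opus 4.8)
The plan is to deduce the inequality from the algebraic isometry theorem by exhibiting an interleaving of the sublevel-set persistence modules of $f$ and $g$. Set $\delta := c - \eps/2$, which is positive by hypothesis. Since $d_{C^0}(f,g) \le \delta$, for every $t\in\R$ one has $\{f \le t\} \subseteq \{g \le t+\delta\} \subseteq \{f \le t+2\delta\}$, and these inclusions commute with those inducing the connecting maps $\pi^{s,t}$. Consequently the persistence modules $V_\m(f)$ and $V_\m(g)$ are $\delta$-interleaved, so by the algebraic isometry theorem \cite{Bauer-Lesnick} (see also \cite[Theorem 2.2.8, Equation (6.4)]{PRSZ}) the bottleneck distance satisfies $d_{\mrm{bottle}}\big(\cl B_\m(f),\,\cl B_\m(g)\big) \le \delta$.

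Next I would unpack this bound into a matching statement. By definition of the bottleneck distance there is an injective partial matching $\sigma$ between the multisets $\cl B_\m(f)$ and $\cl B_\m(g)$ such that whenever $\sigma$ matches a bar $[a,b)$ to a bar $[a',b')$ one has $|a-a'|\le\delta$ and $|b-b'|\le\delta$ (in particular, with the convention $\infty-\infty=0$, infinite bars can only be matched to infinite bars, which is consistent since on both sides the number of infinite bars in degree $\m$ equals $b_\m(M)$), while every bar outside the domain or range of $\sigma$ has length at most $2\delta$.

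The counting step is then immediate. Let $I=[a,b)$ be a bar of $\cl B_\m(f)$ of length $b-a>2c$. Since $2\delta = 2c-\eps < 2c$, such an $I$ cannot be unmatched, so $\sigma(I)=[a',b')$ is defined, and $b'-a' \ge (b-a) - |b-b'| - |a-a'| \ge (b-a) - 2\delta > 2c - (2c-\eps) = \eps$; thus $\sigma(I)$ is a bar of $\cl B_\m(g)$ of length $>\eps$. Because $\sigma$ is injective, $I\mapsto\sigma(I)$ is an injection, respecting multiplicities, from the bars of $\cl B_\m(f)$ of length $>2c$ into the bars of $\cl B_\m(g)$ of length $>\eps$; this is exactly $\cl N_{\m,2c}(f) \le \cl N_{\m,\eps}(g)$ when $f$ and $g$ are Morse.

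The only point requiring genuine care is passing from Morse functions to arbitrary continuous $f$ and $g$, since for such functions $\cl N_{\m,\delta}$ is defined through Morse approximations as in \cite[Section 2.2]{BPPPSS}. I would conclude in this generality either by directly invoking the $C^0$-stability of $\cl N_{\m,\delta}$ recorded there, or by choosing Morse approximations $f_k\to f$, $g_k\to g$ uniformly, observing that $d_{C^0}(f_k,g_k)$ can be kept $\le c-\eps/2$ (up to shrinking $\eps$ by an arbitrarily small amount), applying the Morse case, and passing to the limit using the semicontinuity of $\cl N_{\m,\cdot}$ under uniform perturbations. Bookkeeping with these approximation parameters, rather than the interleaving argument itself, is where the only real care is needed.
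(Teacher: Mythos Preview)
Your proposal is correct and follows exactly the route the paper indicates: the paper does not give a proof of this theorem but simply records it as a ``direct consequence of the algebraic isometry theorem \cite{Bauer-Lesnick} (see \cite[Theorem 2.2.8, Equation (6.4)]{PRSZ}),'' and your argument is precisely the standard unpacking of that consequence via the $\delta$-interleaving of sublevel-set modules, the resulting bottleneck bound, and the induced injection on long bars. Your remarks on the passage from Morse to continuous functions are also appropriate, since the paper likewise defers this to \cite[Section 2.2]{BPPPSS}.
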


\subsection{Bounds on barcodes of analytic functions}

For a continuous map $f: \C^n \to \C^m,$ and $r>0,$ set \[ \cl N_{\delta}(f,r) = \cl N_{\delta}( |f | \big |_{B_r}),\;\;\;  \cl N_{q,\delta}(f,r) = \cl N_{q, \delta}(|f| \big |_{B_r}).\] 
Consider the invariants $\zeta_d(f,r,\delta)$ from Section \ref{sec: discussion zeta m}. Note that \[ \zeta_d(f,r, \delta) \leq \cl N_{\delta}(f,r),\;\;   \zeta(f,r,\delta) \leq \cl N_{0,\delta}(f,r),\] 
as shown in Lemma 6.4 and Remark 6.5 in \cite{BPPPSS}. Using the terminology and methods of persistence we can prove the following result, which therefore generalizes Theorems \ref{thm: analytic bezout} and \ref{thm: analytic bezout higher homol}.

\begin{theorem}\label{thm: analytic bezout persistence}
For any analytic map $f: \C^n \to \C^m$, $m\leq n$ and any  $a>1$, $r>0$, and $\delta \in (0, \frac{\mu(f, ar)}{2})$,  we have

\begin{equation}
\label{eq: main bezout pers}
\cl N_{\delta}(f) \leq C \left(\log\left(\frac{\mu(f, ar)}{\delta}\right)\right)^{2n},
\end{equation}

\begin{equation}
\label{eq: main bezout pers 0}
\cl N_{0, \delta}(f) \leq C \left(\log\left(\frac{\mu(f, ar)}{\delta}\right)\right)^{2n-1},
\end{equation}
where the constant $C$ depends only on $a$ and $n$. 	

\end{theorem}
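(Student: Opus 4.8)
The plan is to run the same approximation-by-polynomial argument used for Theorems~\ref{thm: analytic bezout} and \ref{thm: analytic bezout higher homol}, but now bookkeep everything through persistence modules so that the stability theorem (Theorem~\ref{thm: stab}) handles the passage from $f$ to its Taylor polynomial and back. As in Remark~\ref{rmk: Dimension_Reduction}, it suffices to treat $m=n$. First I would apply Proposition~\ref{lma: Cauchy harmonic} to choose the Taylor polynomial mapping $p$ of degree $<k$ with $d_{C^0}(|f|,|p|)\le |f-p|\le C_a a^{-k}\mu(f,ar)$ on $B_r$, taking $k$ minimal with $C_a a^{-k}\mu(f,ar)<\delta/2$, so that $k\le C'_a\log(\mu(f,ar)/\delta)$; perturbing $p$ by a small homogeneous degree-$k$ term as in the proof of Theorem~\ref{thm: analytic bezout} we may assume $p$ is proper without destroying $d_{C^0}(|f|,|p|)<\delta/2$. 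Setting $\eps$ small and $c=\delta/2$ (so $c-\eps/2\ge d_{C^0}(|f|,|p|)$ for $\eps$ chosen appropriately, e.g. any $\eps$ with $0<\eps<\delta-2d_{C^0}(|f|,|p|)$), Theorem~\ref{thm: stab} gives, for every $\m$,
\[
\cl N_{\m,\delta}(|f|\big|_{B_r}) \le \cl N_{\m,2c}(|f|\big|_{B_r}) \le \cl N_{\m,\eps}(|p|\big|_{B_r}),
\]
and after shrinking $\eps$ further we may as well take $\eps\to 0^+$, or more carefully apply Theorem~\ref{thm: stab} with $c=\delta/2$ and $\eps=\delta-2d_{C^0}$; in any case $\cl N_{\delta}(f,r)\le \cl N_{0}(|p|\big|_{B_r})$, the total number of bars in the barcode of $|p|\big|_{B_r}$ counted with multiplicity over all degrees.

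Next I would bound the total number of bars of $|p|\big|_{B_r}$ purely topologically. The total number of bars in $\cl B(|p|\big|_{B_r})$ equals the number of their starting points, which is at most the number of critical values of a Morse perturbation of $|p|^2$ on $B_r$, hence at most the total number of critical points of such a perturbation, which in turn is controlled by the sum of Betti numbers of the sublevel sets; more directly, the number of bars is bounded by $\sum_d \dim H_d(\{|p|^2\le t\}\cap B_r)$ maximized over $t$, together with the contribution of $\partial B_r$. Here I would invoke exactly the semialgebraic Betti-number estimates already used in the paper: $\{|p|^2\le t\}\cap B_r$ is cut out of $\R^{2n}$ by two polynomial inequalities of degrees $<2k$ and $2$, so by \cite[Theorem~3]{Milnor} its total Betti number is $O(k^{2n-1})$, and the number of critical points appearing from the boundary sphere $\partial B_r$ (which governs the ``peninsula'' part of the barcode) is $O(k(k)^{2n-2})=O(k^{2n-1})$ by the estimate in Proposition~\ref{prop: complex poly} via Milnor's \cite[Theorem~2]{Milnor}. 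Summing up, the number of bars of $|p|\big|_{B_r}$ over all degrees is $O(k^{2n})$—one extra power of $k$ over the single-degree bound because we sum over the $O(k^{2n-1})$-dimensional worth of homology and over the boundary contributions—and substituting $k=O(\log(\mu(f,ar)/\delta))$ yields \eqref{eq: main bezout pers}.

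For \eqref{eq: main bezout pers 0} I would repeat the argument restricted to degree $\m=0$: the number of degree-$0$ bars of $|p|\big|_{B_r}$ is the number of local minima of a Morse perturbation of $|p|^2$ on $B_r$, and each such minimum is either interior—hence a zero of $p$ by Proposition~\ref{prop:islands}, of which there are at most $k^n$ by B\'ezout—or lies on $\partial B_r$, in which case it sits in one of the at most $5k(10k)^{2n-2}$ connected components of $\Crit(|p|^2|_{\partial B_r})$ from Proposition~\ref{prop: complex poly}. This matches exactly the count in the proof of Theorem~\ref{thm: analytic bezout} and gives the sharper exponent $2n-1$. The main obstacle, and the only genuinely new point relative to the earlier proofs, is the first step: one must be careful that Theorem~\ref{thm: stab} is being applied to $|f|$ and $|p|$ as continuous (not Morse) functions on the compact manifold-with-boundary $B_r$, so I would cite \cite[Section~2.2]{BPPPSS} for the extension of $\cl N_{\m,\delta}$ to continuous functions and the fact that the stability statement persists in that generality, and I would make sure the inequalities $\zeta_d(f,r,\delta)\le \cl N_\delta(f,r)$ and $\zeta(f,r,\delta)\le \cl N_{0,\delta}(f,r)$ (Lemma~6.4 and Remark~6.5 of \cite{BPPPSS}) are invoked to confirm that \eqref{eq: main bezout pers}--\eqref{eq: main bezout pers 0} indeed imply Theorems~\ref{thm: analytic bezout} and \ref{thm: analytic bezout higher homol}.
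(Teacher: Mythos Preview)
Your overall strategy—Taylor-approximate, invoke the stability theorem to transfer the bar count to $|p|\big|_{B_r}$, then bound the bars of the polynomial combinatorially—is exactly the paper's, and your degree-$0$ argument for \eqref{eq: main bezout pers 0} is essentially correct: interior local minima of $|p|^2$ are zeros of $p$ (at most $k^n$), boundary minima lie in components of $\Crit(|p|^2|_{\partial B_r})$ (at most $O(k^{2n-1})$ by Proposition~\ref{prop: complex poly}), matching the proof of Theorem~\ref{thm: analytic bezout}.

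The all-degrees bound \eqref{eq: main bezout pers}, however, has a genuine gap. You assert that the total number of bars is bounded by $\max_t \sum_d \dim H_d(\{|p|^2\le t\}\cap B_r)$ plus a boundary term, but this is false: $\sum_d\dim H_d(\{f\le t\})$ counts only the bars \emph{containing} $t$, and bars with disjoint supports are never witnessed simultaneously. A one-variable Morse function with $N$ bumps at strictly increasing, well-separated levels has $\Theta(N)$ degree-$0$ bars while every sublevel set has at most two components. Your preceding sentence, that the number of critical points of a Morse perturbation ``is controlled by the sum of Betti numbers of the sublevel sets'', inverts the Morse inequalities. (As a separate slip, Milnor's Theorem~3 applied to $\{|p|^2\le t\}\cap B_r$ gives $O(k^{2n})$, not $O(k^{2n-1})$; see the computation in the proof of Theorem~\ref{thm: analytic bezout higher homol}.)

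The paper closes this gap differently. After the Taylor step it perturbs $p$ (via a transversality argument that it only sketches) so that $h=|p|^2$ is Morse on $B_r$, has no critical points on $\partial B_r$, and $h|_{\partial B_r}$ is Morse. Then, by Morse theory for manifolds with boundary (cf.\ \cite[Proposition~4.12]{BPPPSS}), every bar endpoint corresponds to a critical point of $h$ in $\interior(B_r)$ or of $h|_{\partial B_r}$, so the number of bars is bounded by the total critical-point count. The interior critical set $\{\nabla h=0\}$ is cut out by $2n$ real polynomial equations of degree $<2k$, hence has $O(k^{2n})$ points by Milnor; the boundary contributes $O(k^{2n-1})$ as in Proposition~\ref{prop: complex poly}. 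This direct critical-point count is what should replace your sublevel-Betti-number step.
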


Since, given Theorem \ref{thm: stab}, this result reduces essentially to our proofs above and does not
influence the main results of the paper, we only briefly sketch its proof. 

\begin{proof}[\bf Sketch of the proof.]
Firstly, we reduce as above to the case $m=n,$ see Remark \ref{rmk: Dimension_Reduction}. By Proposition \ref{lma: Cauchy harmonic}, we can approximate $ f $ by a complex Taylor polynomial mapping $ p $ at $ 0 $ of degree $ <k $ such that $ |f-p| \leq C_{a} a^{-k} \mu(f,ar) $ on $ B_r $. Here we choose $ k $ to be the minimal positive integer such that $ C_a a^{-k} \mu(f,ar) < \delta/2 $. Furthermore, by a classical but lengthy transversality argument (which we omit), we may assume that $h=|p|^2$ has no critical points on $\del B_r$, is Morse on $B_r$  and $h|_{\del B_r}$ is Morse. Now, by Proposition \ref{prop: complex poly} the number of critical points of $h$ on $B_r$ is at most $C k^{2n},$ the number of critical points of $h$ of index $0$ is at most $C k^n,$ while the number of critical points of $h|_{\del B_r}$ is at most $C k^{2n-1}.$ The starting points of bars of degree $0$ correspond to critical points of $h$ of index $0$ and to critical points of index $0$ of $h|_{\del B_r}$ with gradient pointing inwards, while the endpoints of bars in general correspond to a subset of the critical points of $h$ and of $h|_{\del B_r}.$ This follows from Morse theory for manifolds with boundary and was discussed in a more general framework in \cite[Proposition 4.12]{BPPPSS}. Therefore $\cl N_{\eps}(|p|) \leq C k^{2n}$ and $\cl N_{0,\eps}(|p|) \leq C k^{2n-1}$ for any $\eps>0.$ Taking $ ||f|-|p|| \leq |f-p| < c< \delta/2$ and $0<\eps<c - \delta$,  Theorem \ref{thm: stab} implies $\cl N_{\delta}(|f|) \leq C k^{2n}$ and $\cl N_{0,\delta}(|f|) \leq C k^{2n-1},$ which translates to the required bound by our choice of $k.$
\end{proof}

\begin{rmk}
An entire map $f:\C^n\to \C^n$ naturally gives rise to a persistence module $H_*(\{ |f|\leq t \} \cap B_r )$ in two parameters $r$ and $t.$ In this paper we considered it as an $r$-parametrized family of persistence modules with one parameter $t$. It would be interesting to study this persistence module from the viewpoint of multiparameter persistence, see \cite{BotnanLesnick} and references therein, for example by using the recently introduced language of signed barcodes, see \cite{BOO1,BOO2,OudotScoccola}.
\end{rmk}

\bibliographystyle{abbrv}
\bibliography{bibliographySobolev0423.bib}

\end{document}